\documentclass[10pt,reqno,oneside]{amsart}
\usepackage{amsmath}
\usepackage{amsthm}
\usepackage{newtxtext,newtxmath}
\usepackage{ulem}
\usepackage{eqparbox}
\usepackage{mathtools}
\usepackage{array}
\usepackage[bb=ams,bbscaled=.97,cal=cm,calscaled=.96,frak=euler]{mathalfa}
\usepackage[utf8]{inputenc}
\usepackage{graphicx}
\usepackage{stmaryrd}

\usepackage[textwidth=125mm,textheight=195mm,paperwidth=210mm,paperheight=297mm]{geometry}

\newcommand{\hair}{\ifmmode\mskip1.5mu\else\kern0.05em\fi}

\usepackage{enumitem}
\setenumerate[0]{label=(\textit{\roman*}\hair),font=\rm}

\usepackage[colorlinks]{hyperref}
\hypersetup{
 pdftitle = {},
 pdfauthor = {},
}
\usepackage{tikz}

\DeclareMathOperator{\rank}{rank}

\usepackage{tikz}
\usetikzlibrary{matrix,arrows,decorations.markings,calc,intersections,shapes,backgrounds}

\newcommand{\llrr}[1]{%
\llbracket #1 \rrbracket}
\newcommand{\blank}{\mkern3mu\cdot\mkern3.7mu}
\newcommand{\hatotimes}{\mathbin{\widehat\otimes}}

\numberwithin{equation}{section}

\newtheorem{theorem}[equation]{Theorem}
\newtheorem*{theorem*}{Theorem}
\newtheorem{proposition}[equation]{Proposition}
\newtheorem{lemma}[equation]{Lemma}
\newtheorem*{lemma*}{Lemma}
\newtheorem{corollary}[equation]{Corollary}
\newtheorem*{corollary*}{Corollary}
\theoremstyle{definition}
\newtheorem{definition}[equation]{Definition}
\newtheorem{example}[equation]{Example}
\newtheorem{notation}[equation]{Notation}
\theoremstyle{remark}
\newtheorem{remark}[equation]{Remark}

\newcommand{\Wedge}{\mathrm{\Lambda}\mkern-2mu}
\DeclareMathOperator{\Dgn}{dgn}
\DeclareMathOperator{\Ext}{Ext}

\DeclareMathOperator{\Pic}{Pic}

\DeclareMathOperator{\GL}{GL}
\DeclareMathOperator{\SL}{SL}
\DeclareMathOperator{\SU}{SU}

\DeclareMathOperator{\Tot}{Tot}

\DeclareMathOperator{\Sym}{S}

\DeclareMathOperator{\Spec}{Spec}

\DeclareMathOperator{\tr}{tr}

\DeclareMathOperator{\id}{id}
\newcommand{\HH}{\operatorname{HH}}

\hyphenation{mani-fold sub-mani-fold nil-potent Kon-tse-vich}
\renewcommand{\hbar}{\text{\raisebox{.5ex}{\rotatebox{15}{--}}\hspace{-0.6em}$h$}}

\renewcommand{\H}{\operatorname{H}}
\usepackage{xfrac}

\begin{document}
\title[Quantizations and noncommutative instantons]{Quantizations of local surfaces and rebel instantons}

\author{Severin Barmeier}
\author{Elizabeth Gasparim}

\begin{abstract}
We construct explicit deformation quantizations of the noncompact complex surfaces $Z_k := \Tot (\mathcal O_{\mathbb P^1} (-k))$ and describe their effect on moduli spaces of vector bundles and instanton moduli spaces. We introduce the concept of rebel instantons, as being those which react badly to some quantizations, misbehaving by shooting off extra families of noncommutative instantons. We then show that the quantum instanton moduli space can be viewed as the étale space of a constructible sheaf over the classical instanton moduli space with support on rebel instantons.
\end{abstract}

\maketitle

\setcounter{tocdepth}{1}
\tableofcontents


\section{Motivation and results}

In this work we clarify a specific aspect of the quantization of $\SU(2)$ instantons, by describing explicitly the effect that deformation quantization has on moduli spaces of instantons over noncompact complex surfaces. Our main contribution is to identify those classical instantons which react badly to certain choices of quantization, by shooting off extra families of noncommutative instantons. We call them rebel instantons. Thus, rebel instantons cause the quantum moduli spaces to become larger than the corresponding moduli on the classical limit.

Our complex surfaces of choice are total spaces of negative line bundles on the complex projective line $\mathbb P^1$, namely the surfaces $Z_k := \Tot(\mathcal O_{\mathbb P^1} (-k))$ for $k \geq 1$.  Our interest in them originates in the wish to understand how the mathematical physics on them reacts to contraction of the complex curve $\mathbb P^1$ to a point. This can only be done in the case of negative normal bundle, hence the choice of $-k$ neighbourhoods. Contracting this curve produces a singular surface whenever $k>1$. In fact, the case $k=1$ --- where the contraction produces the smooth surface $\mathbb C^2$ --- allows for quantization without any rebel instantons, in contrast to what happens when $k > 1$.

For each surface $Z_k$ we compute all possible holomorphic Poisson structures, 
and then calculate explicitly corresponding deformation quantizations, writing out star products. We study instantons and their moduli via the Kobayashi--Hitchin 
correspondence, that is, by constructing holomorphic vector bundles over these surfaces. We describe vector bundles concretely using matrices, in the spirit of the ADHM construction, but in a somewhat further simplified manner which is made possible by the filtrability of bundles on $Z_k$. Accordingly, an instanton or vector bundle on $Z_k$ can be described by a single upper-triangular matrix with polynomial entries. A similar use of filtrability allows us to describe vector bundles over  noncommutative deformations and to describe how their moduli spaces change after deforming the surface. 

Our main result is the following:

\begin{theorem*}[Thm.~\ref{qi}]
The quantum instanton moduli space $\mathbb{QI}_j^{(1)} (\mathcal Z_k (\sigma))$ can be viewed as the étale space of a constructible sheaf over the classical instanton moduli space $\mathbb {MI}_j( Z_k)$, which is supported on a closed subvariety, being trivial over 
$$
S_0:= \{P \in \mathbb{MI}_j (Z_k) \mid p_{1,k-j+1} \neq 0 \}
$$
and having stalk of dimension $i$ over 
$$
S_i := \{P \in \mathbb {MI}_j (Z_k) \mid p_{1,k-j+1} = \cdots = p_{1,k-j+1+i} = 0, \; p_{1,k-j+2+i} \neq 0 \}.
$$
\end{theorem*}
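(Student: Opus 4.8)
The plan is to exploit the explicit canonical form for instantons on $Z_k$ as holomorphic bundles given by transition matrices, and then to solve the corresponding deformation problem over the quantized surface $\mathcal Z_k(\sigma)$ order by order in the deformation parameter. Concretely, a point $P\in\mathbb{MI}_j(Z_k)$ is represented by a rank-two bundle on $Z_k$ with transition matrix $\bigl(\begin{smallmatrix} z^{j} & p\\ 0 & z^{-j}\end{smallmatrix}\bigr)$, where $p=\sum p_{i,l}\,z^{i}u^{l}$ and $(i,l)$ runs over the finite admissible range forced by holomorphic gluing across $Z_k$; the coefficients $p_{i,l}$, modulo the residual change-of-trivialization action, are the coordinates on $\mathbb{MI}_j(Z_k)$. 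I would take this description, together with the explicit star products computed in the earlier sections, as the starting point.

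First I would set up the quantum gluing problem: write a quantum transition matrix $T=T_0+\hbar T_1+\hbar^2 T_2+\cdots$ with $T_0$ the classical matrix above, and impose that $T$ defines a bundle over $\mathcal Z_k(\sigma)$ --- equivalently, that it is consistent over the relevant noncommutative localization with respect to the star product. Expanding in $\hbar$ turns this into a recursion: at order $n$ one must solve $\delta T_n=R_n(T_0,\dots,T_{n-1})$ for a suitable coboundary operator $\delta$, with the obstruction class of $R_n$ lying in an $\Ext^2$/$\H^1$-type group and the freedom at each stage an $\H^1$-type group, both of which I would compute explicitly from the canonical form. The crux is to read off which new entries can be added to $T_n$ and which can be removed again by a quantum change of trivialization: a nonzero leading coefficient $p_{1,k-j+1}$ allows one to gauge away the would-be new parameter, while its vanishing --- and then the vanishing of $p_{1,k-j+2}$, and so on --- liberates one parameter at each step. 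This is the mechanism producing the \emph{rebel} families of noncommutative instantons.

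Granting this, the stalk count falls out. Over $S_0=\{p_{1,k-j+1}\neq 0\}$ every order-by-order choice is gauge-removable, so the quantum lift is unique up to isomorphism and reproduces the classical bundle: the sheaf $\mathbb{QI}_j^{(1)}(\mathcal Z_k(\sigma))$ is trivial there. Over $S_i=\{p_{1,k-j+1}=\cdots=p_{1,k-j+1+i}=0,\ p_{1,k-j+2+i}\neq 0\}$ exactly $i$ parameters survive the gauge action, giving an $i$-dimensional space of noncommutative deformations and hence a stalk of dimension $i$. For constructibility one notes that each $S_i$ is a locally closed subvariety of $\mathbb{MI}_j(Z_k)$ cut out by vanishing and non-vanishing of the functions $p_{1,\bullet}$, that these strata partition the closed locus $\{p_{1,k-j+1}=0\}=\bigsqcup_{i\geq 1}S_i$ off of which the sheaf vanishes, and that the dimension count is constant along each stratum; together these are exactly the conditions defining a constructible sheaf supported on that closed subvariety. (The case $k=1$ is then the special feature that the admissible range never contains the troublesome monomial, so $S_0$ is everything and there are no rebels.)

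The main obstacle I expect is precisely the order-by-order quantum gluing with the explicit Poisson structures on $Z_k$, which are generically degenerate: one must show the obstruction groups vanish in every order so that the deformations are unobstructed, and --- more delicately --- carry out the gauge bookkeeping that distinguishes genuine new moduli from trivializable ones. That bookkeeping is where the specific index $k-j+1$ and the asymmetry between $k=1$ and $k>1$ must be made to appear, and it requires tracking how the admissible monomials $z^{i}u^{l}$ behave under the noncommutative product and under the residual automorphisms of the quantized bundle.
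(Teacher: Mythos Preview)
Your overall strategy --- represent instantons by upper-triangular transition matrices, lift to $\mathcal Z_k(\sigma)$, and count which first-order perturbations survive the gauge action --- matches the paper's. But several points need correction.

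First, the superscript $(1)$ means first order in $\hbar$ only: you impose $\hbar^2=0$ throughout. There is no recursion ``order by order'', no $T_2,T_3,\dots$, and no obstruction class to worry about. The entire computation is a single linear problem: given $p+p'\hbar$ and $q+q'\hbar$ in $\Ext^1_{\mathcal A}(\mathcal A(j),\mathcal A(-j))$, when are the corresponding bundles isomorphic? The paper writes out the $2\times2$ isomorphism equation explicitly, restricts to the first infinitesimal neighbourhood $\ell^{(1)}$ (so $u^2=0$), and observes that the $(1,1)$, $(2,1)$, $(2,2)$ entries can always be solved, leaving only the $(1,2)$ entry as the genuine constraint.

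Second, the mechanism you describe abstractly is made completely concrete in the paper: when $\sigma_U$ is a multiple of $u$ (automatic for $k\geq 3$), all Poisson brackets in the $(1,2)$ entry vanish on $\ell^{(1)}$ and the remaining condition is the Toeplitz system
\[
(p'_{1,l}-q'_{1,l})_l = M(p)\,(a'_{0,m}-d'_{0,m})_m,
\]
where $M(p)$ is upper-triangular Toeplitz with $p_{1,k-j+1}$ on the diagonal, $p_{1,k-j+2}$ on the superdiagonal, etc. The corank of $M(p)$ is exactly the number of leading zeros among $p_{1,k-j+1},p_{1,k-j+2},\dots$, which immediately gives the stalk dimension $i$ over $S_i$. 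You should aim for this explicit linear-algebraic statement rather than the $\Ext^2/\H^1$ language.

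Third, your parenthetical about $k=1$ is wrong. The absence of rebels on $Z_1$ is \emph{not} a feature of $k=1$ per se but of the minimally degenerate Poisson structure $\sigma_0=(1,-\xi)$; for $\sigma=(u,-\xi^2 v)$ on $Z_1$ the paper exhibits rebel instantons already at $j=2$. The distinction is whether $\sigma_U$ has a nonzero constant term (only possible when $k=1$), not whether $k=1$.

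Finally, a notational slip: the paper writes $p=\sum p_{il}\,z^l u^i$, so the relevant coefficients are $p_{1,l}$ (power $1$ in $u$), not $p_{i,1}$.
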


Since we are dealing with moduli space of bundles on non-compact varieties, the topology of moduli spaces of bundles is rather subtle, already for the classical moduli space \cite{BGK2}. A detailed analysis of the topology of the quantum moduli space would certainly be important and might provide insight into further interesting phenomena, such as studying the effect of evaluating the formal deformation parameter $\hbar$ to a non-zero constant where possible. However, these considerations would deserve a separate treatment and we thus choose to sideline these issues for the present article by simply viewing the quantum moduli space as the étale space (or sheaf space) of a constructible sheaf, constant over each stratum, making the map to the classical moduli space continuous.

Nonetheless Thm.~\ref{qi} already shows that the effect of noncommutative deformations on instantons is radically different from the effect of commutative ones. In fact, \cite[Thm.\ 7.3]{BrG} showed that (nontrivial) classical deformations of $Z_k$ admit {\it no} instantons, although the surfaces $Z_k$ have rich instanton moduli spaces. (See Not.~\ref{insrep} for the representation of instantons in canonical coordinates.)

We now describe some of the literature on the subject and next the structure of this paper. Instantons on noncommutative spaces have been considered from various points of view, starting with instantons on noncommutative $\mathbb R^4$ and noncommutative tori \cite{NS} and their relations with string theory \cite{SW}. Nekrasov--Schwarz \cite{NS} proposed a modification of the ADHM equations, which describe instantons on a noncommutative $\mathbb R^4$. Kapustin--Kuznetsov--Orlov \cite{KKO} showed that the complex point of view can also be generalized to the noncommutative setting, identifying these noncommutative instantons with algebraic vector bundles on a noncommutative projective plane $\mathbb P^2_\hbar$ framed at a line at infinity. The space of solutions to these modified ADHM equations turns out to yield a smooth compactification of the moduli space of instantons on the (commutative) $\mathbb R^4$, so that the noncommutative viewpoint also sheds light onto classical instantons. (Other interesting features and generalizations are as follows. This compactification can be viewed as the moduli space of torsion-free sheaves on $\mathbb P^2$ framed at a fixed line at infinity, see Nakajima \cite{nakajima}. Furthermore,
from the point of view of instanton counting, moduli of torsion-free sheaves were extensively used as partial compactifications of moduli of instantons, in the very successful instanton partition function defined by Nekrasov, and explored in \cite{NO, NY, GL}. There are also approaches to noncommutative instantons from the point of view of noncommutative geometry as for example in \cite{BvS,CLS}. We shall not pursue these aspects here.)

Classical $\SU (2)$ instantons on $Z_k$ can be identified with holomorphic rank $2$ bundles on $Z_k$ via the Kobayashi--Hitchin correspondence, which for $k = 1$ was proven by King \cite{Ki} and for $k \geq 2$ in \cite{GKM}. Considering deformation quantizations $\mathcal A$ of the sheaf $\mathcal O_{Z_k}$ we obtain instantons on noncommutative deformations of $Z_k$ as locally free sheaves of $\mathcal A$-modules, generalizing the concept of holomorphic vector bundle to these noncommutative spaces. In general the surfaces $Z_k$ do not admit locally constant holomorphic Poisson structures, and instead of the Moyal product we shall thus consider the Kontsevich star product extended to these surfaces. 

The paper is organized as follows. In \S\S \ref{poissongeometry}--\ref{quantizableimmersions} we describe the noncommutative deformation theory of $Z_k$. In \S \ref{commutative} we review the theory of vector bundles on $Z_k$ and their commutative deformations and discuss vector bundles on noncommutative deformations of $Z_k$ in \S \ref{vectorbundles}. We use the explicit expressions of star products obtained in \S \ref{quantizableimmersions} to show in \S\ref{ncmoduli} that purely noncommutative deformations have nontrivial moduli of vector bundles. Applications to (noncommutative) instantons are described in \S\S\ref{classicalinstantons}--\ref{instantons}.

\section{Poisson geometry}
\label{poissongeometry}


In this section we study the holomorphic Poisson geometry of the surfaces $Z_k = \Tot \mathcal O_{\mathbb P^1} (-k)$ for $k \geq 1$, which will be the starting point for quantizations in subsequent sections.

\begin{definition}\label{poissonX} \cite{LGPV}
A {\it holomorphic Poisson bracket} on a complex manifold or smooth complex algebraic variety $X$ is a $\mathbb C$-bilinear map
\[
\{ \blank {,} \blank \} \colon \mathcal O_X \times \mathcal O_X \to \mathcal O_X
\]
satisfying
\begin{flalign*}
&& \{ f, g \} &= - \{ g, f \} && \llap{\it (skew-symmetry)} \\
&& \{ f g, h \} &= f \{ g, h \} + \{ f, h \} g && \llap{\it (Leibniz rule)} \\
&& \{ \{ f, g \}, h \} &= \{ f, \{ g, h \} \} + \{ \{ f, h \}, g \} && \llap{\it (Jacobi identity)}
\end{flalign*}
for all $f, g, h \in \mathcal O_X$.\footnote{Here we write $f, g, h \in \mathcal O_X$ for sections of $\mathcal O_X$ over some open set. More precisely, a holomorphic Poisson bracket on $\mathcal O_X$ is a family of Poisson brackets indexed by the open sets of $X$ and compatible with the restriction morphisms of the sheaf $\mathcal O_X$.}
\end{definition}

Equivalently, a holomorphic Poisson structure may be described by a holomorphic bivector field
$\sigma \in \H^0 (X, \Wedge^2 \mathcal T_X)$ whose Schouten--Nijenhuis bracket $[\sigma, \sigma] \in \H^0 (X, \Wedge^3 \mathcal T_X)$ is zero. The associated Poisson bracket is then given by the pairing $\langle \blank {,} \blank \rangle$ between vector fields and forms
\[
\{ f, g \} = \langle \sigma, \mathrm d f \wedge \mathrm d g \rangle.
\]

\begin{remark}
On a smooth {\it surface} $X$ the condition $[\sigma, \sigma] = 0$ is satisfied for any bivector field $\sigma \in \H^0 (X, \Wedge^2 \mathcal T_X)$ since $\Wedge^3 \mathcal T_X = 0$, and thus $\H^0 (X, \Wedge^2 \mathcal T_X)$ may be identified with the space of holomorphic Poisson structures.
\end{remark}

We now focus on the surfaces $Z_k$ and their Poisson structures, which we shall describe explicitly in canonical coordinates.

\begin{notation}\label{coord}
We fix coordinate charts $U, V$ on $Z_k$, which we will
refer to as {\it canonical coordinates}, where
\begin{equation}
\label{charts}
 U = \mathbb C^2_{z,u}   = \bigl\{ (  z, u) \in \mathbb C^2 \bigr\} \qquad\text{and}\qquad
 V = \mathbb C^2_{\xi,v} = \bigl\{ (\xi, v) \in \mathbb C^2 \bigr\}
\end{equation}
such that on $U \cap V = \mathbb C^* \times \mathbb C$ we identify
\begin{align}
\label{identification}
(\xi, v) = (z^{-1}, z^k u)\text{.}
\end{align}

We denote by $\ell$ the ideal of the zero section of $Z_k$ regarded as a divisor. Hence $\ell$ is generated by $u$ on the $U$-chart, and by $v$ on the $V$-chart.
\end{notation}

In canonical coordinates, a holomorphic bivector field $\sigma \in \H^0 (Z_k, \Wedge^2 \mathcal T_{Z_k})$ is of the form $\sigma_U \, \tfrac{\partial}{\partial z} {\wedge} \tfrac{\partial}{\partial u}$ on $U$ and $\sigma_V \, \tfrac{\partial}{\partial \xi} {\wedge} \tfrac{\partial}{\partial v}$ on $V$, where $\sigma_U$ (resp.\ $\sigma_V$) is a holomorphic function in $z$ and $u$ (resp.\ $\xi$ and $v$) whose precise form will be given in Lem.~\ref{poissonstructures}.

Given a bivector field $\sigma$ on $Z_k$ the corresponding {\it Poisson bracket} 
$\{ f, g \}_\sigma$ of two global functions $f, g \in \H^0 (Z_k, \mathcal O)$ may be written in canonical coordinates as
\begin{alignat*}{7}
\{ f, g \}_\sigma \vert_U &= \big\langle &\sigma_U \, \tfrac{\partial}{\partial z} {\wedge} \tfrac{\partial}{\partial u},{}& \mathrm d f_U \wedge \mathrm d g_U \big\rangle
= \sigma_U &&\Bigg( \frac{\partial f_U}{\partial z} \frac{\partial g_U}{\partial u} - &\frac{\partial f_U}{\partial u} \frac{\partial g_U}{\partial z} \Bigg)& \\
\{ f, g \}_\sigma \vert_V &= \big\langle &\sigma_V \, \tfrac{\partial}{\partial \xi} {\wedge} \tfrac{\partial}{\partial v},{}& \mathrm d f_V \wedge \mathrm d g_V \big\rangle
= \sigma_V &&\Bigg( \frac{\partial f_V}{\partial \xi} \frac{\partial g_V}{\partial v} - &\frac{\partial f_V}{\partial v} \frac{\partial g_V}{\partial \xi} \Bigg)&,
\end{alignat*}
where $\mathrm d$ is the exterior derivative and $f_U, g_U$ denote the restrictions of $f,g$ to $U$ written in $(z, u)$-coordinates, and similarly for $f_V, g_V$.

\begin{notation}
When referring to a bivector field $\sigma \in \H^0 (Z_k, \Wedge^2 \mathcal T_{Z_k})$ or its corresponding Poisson structure, we will work in canonical coordinates in the basis $\tfrac{\partial}{\partial z} {\wedge} \tfrac{\partial}{\partial u}$ and $\tfrac{\partial}{\partial \xi} {\wedge} \tfrac{\partial}{\partial v}$ and only write its coefficient functions as a pair $(\sigma_U, \sigma_V)$ --- in fact, we often just write $\sigma_U$ in $(z, u)$-coordinates, as $\sigma_V$ can be recovered by writing $-z^{k-2} \sigma_U$ as a function of $\xi$ and $v$ via the change of variables (\ref{identification}).
\end{notation}

An application of the exponential sheaf sequence
\[
0 \longrightarrow \mathbb Z \longrightarrow \mathcal O \stackrel{\exp}\longrightarrow \mathcal O^* \longrightarrow 0
\]
shows that any line bundle on $Z_k$ can be identified with the pullback $\pi^* \mathcal O_{\mathbb P^1} (n)$ of the projection $\pi \colon Z_k \to \mathbb P^1$ to the zero section of $Z_k $. We write $\mathcal O_{Z_k} (n)$ or $\mathcal O (n)$ for $\pi^* \mathcal O_{\mathbb P^1} (n)$, whose change of coordinates from $U$ to $V$ can be given by the transition function $z^{-n}$. Here $n \in \mathbb Z$ is the first Chern class of $\mathcal O (n)$.

To calculate Poisson structures, note that $\Wedge^2 \mathcal T_{Z_k}$ is the anticanonical line bundle. The transition matrix for the tangent bundle of $Z_k$ is given in canonical coordinates by the Jacobian matrix of the change of coordinates $(z, u) \mapsto (z^{-1}, z^k u)$ of the manifold. For $Z_k$
\begin{align}
\label{jacobian}
\operatorname{Jac}_{UV} =
\begin{pmatrix}
\frac{\partial}{\partial z} z^{-1} & \frac{\partial}{\partial u} z^{-1} \\[.75ex]
\frac{\partial}{\partial z} z^k u  & \frac{\partial}{\partial u} z^k u
\end{pmatrix}
=
\begin{pmatrix}
 -z^{-2} & 0 \\
k \, z^{k-1} u & z^k
\end{pmatrix}.
\end{align}
The transition function for the anticanonical line bundle is then given by the determinant of this Jacobian, which is $-z^{k-2}$. We can thus identify $\H^0 (Z_k, \Wedge^2 \mathcal T_{Z_k}) \simeq \H^0 (Z_k, \mathcal O (-k{+}2))$ as the space of Poisson structures.

\begin{lemma}
\label{poissonstructures}
A general Poisson structure $\sigma \in \H^0 (Z_k, \Wedge^2 \mathcal T_{Z_k})$ on $Z_k$ is given in canonical coordinates by
\[
\Big( \sigma_U \, \tfrac{\partial}{\partial z} {\wedge} \tfrac{\partial}{\partial u}, \sigma_V \, \tfrac{\partial}{\partial \xi} {\wedge} \tfrac{\partial}{\partial v} \Big)
\]
where $(\sigma_U, \sigma_V)$ is of the form
\begin{alignat*}{5}
(1) \quad&&                       (f_U + z \, g_U,& -\xi \, f_V - g_V) && \text{for $k = 1$} \\
(2) \quad&&                                  (f_U,& -f_V) && \text{for $k = 2$} \\
(3) \quad&& (u \, f_U + z u \, g_U + z^2 u \, h_U,& -\xi^2 v \, f_V - \xi v \, g_V - v \, h_V) \quad && \text{for $k \geq 3$}
\end{alignat*}
for any global functions $(f_U, f_V), (g_U, g_V), (h_U, h_V) \in \H^0 (Z_k, \mathcal O_{Z_k})$.

In the basis $\big( \tfrac{\partial}{\partial z} {\wedge} \tfrac{\partial}{\partial u}, \tfrac{\partial}{\partial \xi} {\wedge} \tfrac{\partial}{\partial v} \big)$, the space of Poisson structures $\H^0 (Z_k, \Wedge^2 \mathcal T_{Z_k})$ is thus generated by
\begin{alignat*}{5}
(1) \quad& (1, -\xi), (z, -1)                              && \text{for $k = 1$} \\
(2) \quad& (1, -1)                                         && \text{for $k = 2$} \\
(3) \quad& (u, -\xi^2 v), (z u, -\xi v), (z^2 u, -v) \quad && \text{for $k \geq 3$}
\end{alignat*}
as a module over global functions.
\end{lemma}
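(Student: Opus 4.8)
The plan is to reduce the statement to an explicit computation of spaces of global sections of line bundles on $Z_k$. As noted just before the lemma, $\Wedge^2\mathcal T_{Z_k}$ is the anticanonical bundle, and its transition function from $U$ to $V$ in canonical coordinates is $\det\operatorname{Jac}_{UV}=-z^{k-2}$, so $\Wedge^2\mathcal T_{Z_k}\simeq\mathcal O(2-k)$; concretely a bivector field is a pair $\bigl(\sigma_U\,\tfrac{\partial}{\partial z}{\wedge}\tfrac{\partial}{\partial u},\,\sigma_V\,\tfrac{\partial}{\partial\xi}{\wedge}\tfrac{\partial}{\partial v}\bigr)$ with $\sigma_U,\sigma_V$ holomorphic on $U,V$ and $\sigma_V=-z^{k-2}\sigma_U$ after the substitution $z=\xi^{-1}$, $u=\xi^k v$. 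On a surface $\Wedge^3\mathcal T_{Z_k}=0$, so no integrability condition has to be imposed, and the whole content of the lemma is the description of $\H^0(Z_k,\Wedge^2\mathcal T_{Z_k})$ as a module over the ring $\H^0(Z_k,\mathcal O_{Z_k})$.

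First I would determine the relevant section spaces. Expanding $\sigma_U=\sum_{i,j\ge0}c_{ij}z^iu^j$ on $U$ and using that $z^iu^j$ becomes $\xi^{kj-i}v^j$ under the change of coordinates, the requirement that $-z^{k-2}\sigma_U$ be holomorphic on $V$ reads $i\le k(j-1)+2$ for every $(i,j)$ with $c_{ij}\ne0$. (Equivalently: $\pi_*\mathcal O_{Z_k}=\bigoplus_{j\ge0}\mathcal O_{\mathbb P^1}(kj)$, so by the projection formula $\H^0(Z_k,\Wedge^2\mathcal T_{Z_k})=\bigoplus_{j\ge0}\H^0\bigl(\mathbb P^1,\mathcal O(kj-k+2)\bigr)$.) Hence the monomials $z^iu^j$ with $0\le i\le k(j-1)+2$ form a $\mathbb C$-basis of $\H^0(Z_k,\Wedge^2\mathcal T_{Z_k})$; the same computation for $\mathcal O(0)$ shows that $z^iu^j$ with $0\le i\le kj$ form a basis of the ring $\H^0(Z_k,\mathcal O_{Z_k})$, which is therefore infinite-dimensional --- so ``module over global functions'', not ``vector space'', is the right framework.

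Next I would read off the generators, grading both spaces by the power of $u$. The degree-$j$ piece of $\H^0(Z_k,\Wedge^2\mathcal T_{Z_k})$ is spanned by $z^iu^j$ with $0\le i\le k(j-1)+2$ (empty unless $k(j-1)+2\ge0$), and the degree-$j$ piece of $\H^0(Z_k,\mathcal O_{Z_k})$ by $z^iu^j$ with $0\le i\le kj$. The lowest nonzero piece of $\H^0(Z_k,\Wedge^2\mathcal T_{Z_k})$ sits in $u$-degree $0$ when $k\le2$ and in $u$-degree $1$ when $k\ge3$, with $\sigma_U$-bases $\{1,z\}$ for $k=1$, $\{1\}$ for $k=2$, and $\{u,zu,z^2u\}$ for $k\ge3$; computing the matching $\sigma_V=-z^{k-2}\sigma_U$ in $(\xi,v)$-coordinates turns these into the pairs listed in the lemma --- e.g.\ for $k\ge3$, $u\mapsto-\xi^2v$, $zu\mapsto-\xi v$, $z^2u\mapsto-v$, and similarly for $k=1,2$. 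That these elements generate the whole module over $\H^0(Z_k,\mathcal O_{Z_k})$ follows from the set identity $\{0,\dots,A\}+\{0,\dots,B\}=\{0,\dots,A+B\}$ applied to the ranges of $z$-exponents: multiplying the lowest piece by functions of the complementary $u$-degree produces exactly the $z$-exponent range $[0,k(j-1)+2]$ of the degree-$j$ piece. Conversely each such product visibly obeys the bound $i\le k(j-1)+2$, so the $\H^0(\mathcal O_{Z_k})$-linear map from the free module on the listed generators does land in $\H^0(Z_k,\Wedge^2\mathcal T_{Z_k})$; for $k\ge3$ the factor $u$ (equivalently, membership in the ideal $\ell$) is forced precisely because $\mathcal O(2-k)$ then has negative degree and has no sections in $u$-degree $0$, while for $k=2$ the map is an isomorphism onto $\H^0(Z_k,\mathcal O_{Z_k})\cdot(1,-1)$, reflecting $\mathcal O(2-k)=\mathcal O$. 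Writing a general $\sigma_U$ in the shape $f_U+zg_U$ ($k=1$) or $uf_U+zug_U+z^2uh_U$ ($k\ge3$) and recording the corresponding $\sigma_V$ then yields exactly the stated families.

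The argument is entirely elementary, and the only real obstacle is the bookkeeping: getting the inequality $i\le k(j-1)+2$ and its edge behaviour at $j=0$ right --- this trichotomy is exactly what separates the three cases $k=1$, $k=2$, $k\ge3$ --- and then checking that the explicit generators simultaneously lie in $\H^0(Z_k,\Wedge^2\mathcal T_{Z_k})$ and span it over the infinite-dimensional ring of global functions, rather than only over $\mathbb C$.
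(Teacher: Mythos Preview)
Your proof is correct and follows essentially the same approach as the paper: identify $\Wedge^2\mathcal T_{Z_k}\simeq\mathcal O(2-k)$ and compute its global sections by tracking which monomials $z^iu^j$ on $U$ extend holomorphically to $V$. The paper defers this computation to two appendix lemmas (one for $\mathcal O(j)$ with $j\ge0$, one for $\mathcal O(-j)$), whereas you inline the computation and spell out the module-generation argument over $\H^0(Z_k,\mathcal O_{Z_k})$ via the exponent-range identity; this is the same content, just organized differently.
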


\begin{proof}
For (1) we need to calculate $\H^0 (Z_1, \mathcal O (1))$, which is Lem.~\ref{j>0}. For (2) and (3) we need to calculate $\H^0 (Z_k, \mathcal O (-k{+}2))$, which is Lem.~\ref{j<0}.
\end{proof}

\begin{remark}
The space of Poisson structures on $Z_k$ is of infinite dimension over $\mathbb C$, but restricted to the $n$th infinitesimal neighbourhood $\ell^{(n)}$, the space of Poisson structures is of dimension
\[
\begin{cases}
\tfrac{(n+1)(n+4)}2 & \text{for } k = 1 \\
n^2 & \text{for } k = 2 \\
\tfrac{n((n-1)k+4)}2 & \text{for } k \geq 3.
\end{cases}
\]
\end{remark}

\begin{notation}
Write $Z_{\geq m}$ for $Z_k$ with $k \geq m$.
\end{notation}

To describe explicit quantizations of Poisson structures on non-affine varieties, the following property will be useful.

\begin{definition}
\label{tangent}
A bivector field $\sigma \in \H^0 (X, \Wedge^2 \mathcal T_X)$ is {\it tangent} to a divisor $D \subset X$ if
\[
\{ \mathcal O_X, \mathcal I_D \} \subset \mathcal I_D
\]
{\it i.e.}\ if the ideal sheaf $\mathcal I_D$ of $D$ is a Poisson ideal. Geometrically, $\sigma$ is tangent to $D$ if for every function $f \in \mathcal O_X$, the restriction of the Hamiltonian vector field $X_f = \{ f{,} \blank \}_\sigma$ to the divisor $D$ is tangent to $D$.
\end{definition}

In \S\ref{quantizableimmersions} we shall quantize Poisson structures tangent to the complement of an affine coordinate chart, so we record their particular form.

\begin{proposition}
\label{tangentpoisson}
Consider the open immersion $\mathbb C^2 \simeq U \subset Z_k$ with complementary divisor $D = Z_k \setminus U = \{ (0, v) \in V \} \simeq \mathbb C$. Then the space of Poisson structures tangent to $D$ is generated by
\begin{alignat*}{5}
(1) \quad& (1, -\xi)                          && \text{for $k = 1$} \\
(2) \quad& (u, -\xi^2 v), (z u, -\xi v) \quad && \text{for $k \geq 2$}
\end{alignat*}
as a module over global functions.
\end{proposition}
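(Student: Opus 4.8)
The plan is to reduce the tangency condition to an explicit divisibility property of $\sigma_V$ and then to intersect this with the description of all Poisson structures from Lem.~\ref{poissonstructures}. In canonical coordinates $D = \{\xi = 0\} \subset V$, so its ideal sheaf $\mathcal I_D$ is generated by $\xi$ on $V$ and, since $D \cap U = \emptyset$, equals $\mathcal O_{Z_k}$ on $U$. Thus $\{\mathcal O_{Z_k}, \mathcal I_D\} \subset \mathcal I_D$ holds automatically over open subsets of $U$, while over $V$ the Leibniz rule reduces it to $\{g, \xi\}_\sigma \in (\xi)$ for every local function $g$. From the coordinate formula for the bracket one gets $\{g, \xi\}_\sigma|_V = -\sigma_V\,\partial g/\partial v$, which lies in $(\xi)$ for all $g$ if and only if $\sigma_V \in (\xi)$ (take $g = v$ for the nontrivial implication). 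Hence the first step yields the criterion: $\sigma$ is tangent to $D$ if and only if $\sigma_V$ is divisible by $\xi$, equivalently if and only if $\sigma$ vanishes along $D$ as a section of $\Wedge^2 \mathcal T_{Z_k}$.

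The second step records the elementary fact that, for a global function $\phi \in \H^0(Z_k, \mathcal O_{Z_k})$, the function $z\phi$ on the $U$-chart extends to a global function on $Z_k$ precisely when $\phi_V$ is divisible by $\xi$ --- indeed $z\phi = \xi^{-1}\phi_V$ on $U \cap V$, which is holomorphic across $\xi = 0$ iff $\phi_V|_{\xi=0} = 0$ --- and then proceeds case by case. For $k = 1$, a general $\sigma = f\cdot(1,-\xi) + g\cdot(z,-1)$ with $f, g$ global has $\sigma_V = -\xi f_V - g_V$, so tangency forces $g_V \in (\xi)$; but $(z,-1)$ and $z\cdot(1,-\xi)$ agree on the dense chart $U$, hence as sections of the line bundle $\Wedge^2 \mathcal T_{Z_1}$, so $g\cdot(z,-1) = (zg)\cdot(1,-\xi)$ with $zg$ global and $\sigma = (f+zg)\cdot(1,-\xi)$. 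For $k \geq 3$, a general $\sigma = f\cdot(u,-\xi^2 v) + g\cdot(zu,-\xi v) + h\cdot(z^2 u,-v)$ has $\sigma_V = -\xi^2 v f_V - \xi v g_V - v h_V$; only the last term fails to be divisible by $\xi$, so (as $v \notin (\xi)$) tangency forces $h_V \in (\xi)$, and using the monomial description of $\H^0(Z_k, \mathcal O_{Z_k})$ one writes $hz^2 = a + zb$ with $a, b$ global, whence $h\cdot(z^2 u, -v) = a\cdot(u,-\xi^2 v) + b\cdot(zu,-\xi v)$ (both sides have $U$-component $hz^2 u = au + b\cdot zu$), eliminating the third generator. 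The case $k = 2$ is analogous: from $\sigma = f\cdot(1,-1)$ tangency forces $f_V \in (\xi)$, and splitting $f$ by the parity of its $z$-degree gives $f = u\cdot a + zu\cdot b$ with $a, b$ global, so $\sigma = a\cdot(u,-\xi^2 v) + b\cdot(zu,-\xi v)$. In every case the reverse inclusion is immediate from the criterion of the first step, since each listed generator visibly has $\sigma_V \in (\xi)$.

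I expect the first step to be routine and the module-theoretic bookkeeping of the second step to be the main obstacle: one must show that imposing $\xi \mid \sigma_V$ leaves nothing beyond the submodule generated by the listed generators. The subtlety is that the coefficient ring is $\H^0(Z_k, \mathcal O_{Z_k})$, which is only the subring of $\mathcal O(\mathbb C^2_{z,u})$ spanned by the monomials $z^i u^j$ with $i \leq kj$ --- so one cannot simply divide by $z$ --- and the point of the argument is precisely that divisibility of $\sigma_V$ by $\xi$ is exactly the compensating condition. The $k \geq 3$ case will require the most care, as there one must additionally check that the auxiliary global functions $a, b$ obtained from rewriting $hz^2$ really are global and that the resulting equality of sections of $\Wedge^2 \mathcal T_{Z_k}$ holds, which it does because a section of a line bundle on $Z_k$ is determined by its component on the dense affine chart $U$.
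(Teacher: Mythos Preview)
Your proof is correct and follows the same approach as the paper: the key observation is that tangency to $D$ is equivalent to $\sigma_V \in (\xi)$, and one then combines this with the classification of Lem.~\ref{poissonstructures}. The paper's own proof records only this observation and leaves the module-theoretic verification implicit, whereas you carry out the case-by-case check that the divisibility condition on $\sigma_V$ forces the ``extra'' generator ($(z,-1)$ for $k=1$, $(1,-1)$ for $k=2$, $(z^2 u,-v)$ for $k\geq 3$) to be absorbed into the remaining ones over global functions; your arguments for this are sound.
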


\begin{proof}
This follows from Lem.~\ref{poissonstructures} and the observation that since $\mathcal I_D \vert_V = (\xi)$, the coefficient function $\sigma_V$ of a Poisson structure $(\sigma_U, \sigma_V)$ which is tangent to $D$ should be a multiple of $\xi$ in $V$-coordinates.
\end{proof}

Poisson structures on $Z_k$ are depicted in Fig.~\ref{monomialspoissonzk}, where dots represent the monomials (in $U$-coordinates) which appear in the expression of a general Poisson structure on $Z_k$ and circled dots those of a Poisson structure tangent to $D = Z_k \setminus U$.

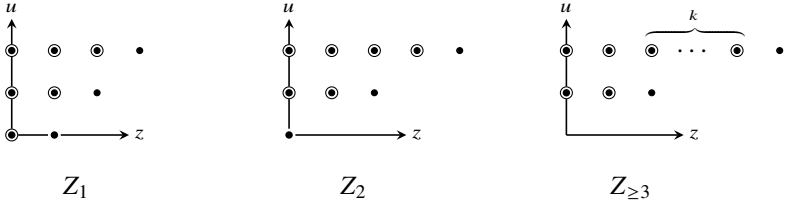
\begin{figure}
\centering
\begin{tikzpicture}[x=1.6em,y=1.6em]
\begin{scope}
\node at (1.5,-1.25) {$Z_1$};
\draw[-stealth,line width=.6pt] (0,0) -- (0,2.75);
\draw[-stealth,line width=.6pt] (0,0) -- (2.75,0);
\node[font=\scriptsize] at (0,3) {$u$};
\node[font=\scriptsize] at (3,0) {$z$};
\draw[color=white,fill=black,line width=1pt] (0,0) circle(1.9pt);
\draw[color=white,fill=black,line width=1pt] (1,0) circle(1.9pt);
\draw[color=white,fill=black,line width=1pt] (0,1) circle(1.9pt);
\draw[fill=black] (1,1) circle(.25ex);
\draw[fill=black] (2,1) circle(.25ex);
\draw[color=white,fill=black,line width=1pt] (0,2) circle(1.9pt);
\draw[fill=black] (1,2) circle(.25ex);
\draw[fill=black] (2,2) circle(.25ex);
\draw[fill=black] (3,2) circle(.25ex);
\draw[fill=none] (0,0) circle(.5ex);
\draw[fill=none] (0,1) circle(.5ex);
\draw[fill=none] (1,1) circle(.5ex);
\draw[fill=none] (0,2) circle(.5ex);
\draw[fill=none] (1,2) circle(.5ex);
\draw[fill=none] (2,2) circle(.5ex);
\end{scope}
\begin{scope}[shift={(6.5,0)}]
\node at (1.5,-1.25) {$Z_2$};
\draw[-stealth,line width=.6pt] (0,0) -- (0,2.75);
\draw[-stealth,line width=.6pt] (0,0) -- (2.75,0);
\node[font=\scriptsize] at (0,3) {$u$};
\node[font=\scriptsize] at (3,0) {$z$};
\draw[color=white,fill=black,line width=1pt] (0,0) circle(1.9pt);
\draw[color=white,fill=black,line width=1pt] (0,1) circle(1.9pt);
\draw[fill=black] (1,1) circle(.25ex);
\draw[fill=black] (2,1) circle(.25ex);
\draw[color=white,fill=black,line width=1pt] (0,2) circle(1.9pt);
\draw[fill=black] (1,2) circle(.25ex);
\draw[fill=black] (2,2) circle(.25ex);
\draw[fill=black] (3,2) circle(.25ex);
\draw[fill=black] (4,2) circle(.25ex);
\draw[fill=none] (0,1) circle(.5ex);
\draw[fill=none] (1,1) circle(.5ex);
\draw[fill=none] (0,2) circle(.5ex);
\draw[fill=none] (1,2) circle(.5ex);
\draw[fill=none] (2,2) circle(.5ex);
\draw[fill=none] (3,2) circle(.5ex);
\end{scope}
\begin{scope}[shift={(13,0)}]
\node at (1.5,-1.25) {$Z_{\geq 3}$};
\draw[-stealth,line width=.6pt] (0,0) -- (0,2.75);
\draw[-stealth,line width=.6pt] (0,0) -- (2.75,0);
\node[font=\scriptsize] at (0,3) {$u$};
\node[font=\scriptsize] at (3,0) {$z$};
\draw[color=white,fill=black,line width=1pt] (0,1) circle(1.9pt);
\draw[color=white,fill=black,line width=1pt] (0,2) circle(1.9pt);
\draw[fill=black] (1,1) circle(.25ex);
\draw[fill=black] (2,1) circle(.25ex);
\draw[fill=black] (1,2) circle(.25ex);
\draw[fill=black] (2,2) circle(.25ex);
\draw (3,2) node {$\dotsc$};
\draw[fill=black] (4,2) circle(.25ex);
\draw[fill=black] (5,2) circle(.25ex);
\draw[fill=none] (0,1) circle(.5ex);
\draw[fill=none] (1,1) circle(.5ex);
\draw[fill=none] (0,2) circle(.5ex);
\draw[fill=none] (1,2) circle(.5ex);
\draw[fill=none] (2,2) circle(.5ex);
\draw[fill=none] (4,2) circle(.5ex);
\draw (3,2.6) node[font=\scriptsize] {$\overbrace{\hspace{5.25em}}^{k}$};
\end{scope}
\end{tikzpicture}
\caption{Monomials of Poisson structures on $Z_k$}
\label{monomialspoissonzk}
\end{figure}

Recall that the {\it $r$th degeneracy locus} of a holomorphic Poisson structure on a complex manifold or algebraic variety $X$ is defined as
\[
D_{2r} (\sigma) = \{ x \in X \mid \rank \sigma (x) \leq 2r \},
\]
where $\sigma$ is viewed as a map $\mathcal T_X^* \to \mathcal T_X$ by contracting a $1$-form with the bivector field $\sigma$. At a given point on a complex {\it surface} a holomorphic Poisson structure has either full rank, or rank $0$. Thus, for the surfaces $Z_k$ we call $\Dgn (\sigma) := D_0 (\sigma)$ {\it the degeneracy locus} of $\sigma$ and this degeneracy locus is given by the zeros of the coefficient functions $\sigma_U$ and $\sigma_V$.

A non-degenerate holomorphic Poisson structure $\sigma$ is called a {\it holomorphic symplectic} structure as $\sigma$ determines a non-degenerate closed holomorphic $2$-form $\omega$ by
\[
\omega (X_f, X_g) = \{ f, g \}_\sigma
\]
where $X_f$ denotes the Hamiltonian vector field associated to a function $f$.

As we shall see in Prop.~\ref{holomorphicsymplectic}, $Z_2$ is the only one of the surfaces $Z_k$ that admits a holomorphic symplectic structure and when this structure is algebraic, it is unique up to scaling.

\begin{example}
\label{degeneracyexample}
If $\sigma$ is as in Lem.~\ref{poissonstructures} with global functions $(f_U, f_V)$, $(g_U, g_V)$, $(h_U, h_V)$ simply (nonzero) {\it constants}, we have
\[
\Dgn (\sigma)
= 
\begin{cases}
\pi^{-1} (x) & \text{for $k=1$} \\
\emptyset & \text{for $k=2$} \\
\pi^{-1} (x) \cup \pi^{-1} (y) \cup \ell & \text{for $k \geq 3$}
\end{cases}
\]
where $\ell \subset Z_k$ is the zero section and $x, y \in \mathbb P^1$ are two points, possibly equal. These degeneracy loci are depicted in Fig.~\ref{degeneracyexamplefigure}.

For $k = 2$ the Poisson structure is non-degenerate and defines a holomorphic symplectic form.
\end{example}

If $\sigma$ is a Poisson structure which is tangent to a fibre $D = \pi^{-1} (x) \simeq \mathbb C$ of the bundle projection $\pi \colon Z_k \to \mathbb P^1$, the picture of Fig.~\ref{degeneracyexamplefigure} reduces to two cases:
\begin{equation}
\label{degeneracytangent}
\Dgn (\sigma) =
\begin{cases}
\pi^{-1} (x) & \text{for $k=1$} \\
\pi^{-1} (x) \cup \ell & \text{for $k \geq 2$.} \\
\end{cases}
\end{equation}
We can set $U = Z_k \setminus D$ so that $x \in \mathbb P^1$ is given in canonical coordinates by $\xi = 0$ and in these coordinates the form of $\sigma$ is given in Prop.~\ref{tangentpoisson}.

Equation (\ref{degeneracytangent}) describes the degeneracy locus for a non-zero linear combination of generators of those Poisson structures tangent to a fibre $D = \pi^{-1} (x)$. The degeneracy locus of a general Poisson structure $\sigma$ tangent to $D$ may be more complicated. However, to construct quantizations of $\sigma$, it is only necessary to require that $\Dgn (\sigma)$ contain a fibre of the projection $\pi$. The effect of quantizations on moduli then only depends on whether $\Dgn (\sigma)$ also contains all of $\ell$ or not and so we introduce the following notation.

\begin{notation}
\label{minimally}
Write $\sigma_0$, respectively $\sigma$, for Poisson structures on $Z_k$ such that
\begin{align*}
\ell \,\not\subset \eqmakebox[dgn]{$\Dgn (\sigma_0)$} \supset\, \pi^{-1} (x) \\
\ell \,\subset \eqmakebox[dgn]{$\Dgn (\sigma)$} \supset\, \pi^{-1} (x)
\end{align*}
respectively.

Note that $\sigma_0$ is a ``minimally degenerate'' Poisson structure on $Z_1$, whereas $\sigma$ denotes Poisson structures which are degenerate on all of $\ell$, which can occur on $Z_k$ for all $k \geq 1$.
\end{notation}

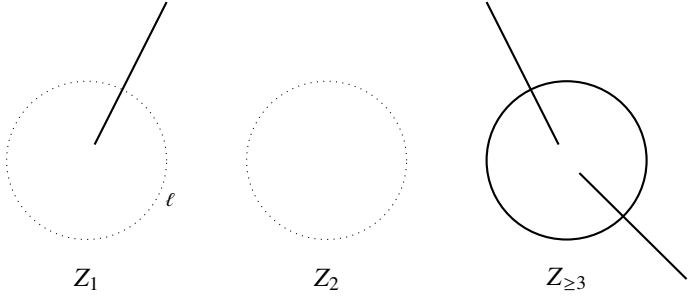
\begin{figure}
\begin{align*}
\begin{tikzpicture}[x=3em,y=3em]
\draw[dotted] (0,0) circle (1);
\path[thick] (0.1,0.2) edge (1,2);
\draw[dotted] (3,0) circle (1);
\draw[thick] (6,0) circle (1);
\path[thick] (5.9,0.2) edge (5,2);
\path[thick] (6.16,-0.16) edge (7.5,-1.5);
\node[font=\scriptsize] at (1.05,-0.5) {$\ell$};
\node at (0,-1.5) {$Z_1$};
\node at (3,-1.5) {$Z_2$};
\node at (6,-1.5) {$Z_{\geq 3}$};
\end{tikzpicture}
\end{align*}
\caption{The (real part of the) degeneracy loci of the Poisson structures of Ex.~\ref{degeneracyexample}.}
\label{degeneracyexamplefigure}
\end{figure}

\begin{proposition}
\label{holomorphicsymplectic}
$Z_k$ admits a holomorphic symplectic structure if and only if $k = 2$. Moreover, if the holomorphic symplectic structure on $Z_2$ is algebraic, then it is a constant multiple of the canonical symplectic structure on $Z_2 \simeq T^* \mathbb P^1$.
\end{proposition}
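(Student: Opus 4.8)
The plan is to reduce the existence question to the triviality of a line bundle. By the discussion preceding Lem.~\ref{poissonstructures}, giving a Poisson structure on $Z_k$ amounts to giving a global section of $\Wedge^2 \mathcal T_{Z_k} \cong \mathcal O_{Z_k}(2-k)$, and since the degeneracy locus of $\sigma$ is cut out by the vanishing of its coefficient functions $\sigma_U, \sigma_V$, the symplectic (everywhere non-degenerate) Poisson structures correspond exactly to the nowhere-vanishing sections. Now a nowhere-vanishing holomorphic section $s$ of a holomorphic line bundle $L$ trivializes $L$ through the isomorphism $\mathcal O_{Z_k} \to L$, $f \mapsto fs$, so the existence of a holomorphic symplectic structure on $Z_k$ forces $\mathcal O_{Z_k}(2-k)$ to be holomorphically trivial. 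As recalled above via the exponential sheaf sequence, the holomorphic line bundles on $Z_k$ are the $\mathcal O_{Z_k}(n)$, $n \in \mathbb Z$, pairwise non-isomorphic as they have distinct first Chern classes, so $\mathcal O_{Z_k}(2-k)$ is trivial precisely when $k = 2$. Conversely, for $k = 2$ the bundle $\Wedge^2 \mathcal T_{Z_2} \cong \mathcal O_{Z_2}$ is trivial, and the constant section $1$ --- that is, the Poisson structure $(\sigma_U, \sigma_V) = (1, -1)$ listed in Lem.~\ref{poissonstructures} --- is nowhere vanishing, hence symplectic.

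For the refinement in the algebraic case I would fix $k = 2$ and take an algebraic symplectic structure $\sigma$. By Lem.~\ref{poissonstructures} it has the form $(f_U, -f_V)$ for a global regular function $f$ on $Z_2$, so its restriction $f_U$ to the affine chart $U = \mathbb C^2_{z,u}$ is a polynomial in $z, u$; non-degeneracy of $\sigma$ says precisely that $f_U$ has no zeros on $U$, and the only such polynomials are the nonzero constants (the units of $\mathbb C[z,u]$). Hence every algebraic symplectic structure on $Z_2$ is $c\,\tfrac{\partial}{\partial z} \wedge \tfrac{\partial}{\partial u}$ for some $c \in \mathbb C^*$, i.e.\ a constant multiple of the Poisson structure $(1,-1)$ of Lem.~\ref{poissonstructures}. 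Since the canonical symplectic structure on $T^* \mathbb P^1 \cong Z_2$ is itself algebraic --- in canonical coordinates it is, up to sign, $\mathrm d z \wedge \mathrm d u$, with inverse bivector $\tfrac{\partial}{\partial z} \wedge \tfrac{\partial}{\partial u}$ (a short check with the change of variables (\ref{identification}) identifies this bivector with the pair $(1,-1)$) --- it too is such a constant multiple, and the claim follows.

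I do not anticipate any serious obstacle; the main point to treat carefully is to stay in the holomorphic category for the first part, where one needs that a nowhere-vanishing section trivializes the holomorphic (not merely algebraic) line bundle and that $\Pic(Z_k)$, computed holomorphically via the exponential sheaf sequence, is still $\mathbb Z$ --- exactly the input already provided in the excerpt. This is also what makes the word ``algebraic'' necessary in the uniqueness statement: for instance $e^u = e^{\xi^2 v}$ is a nowhere-vanishing holomorphic --- but not regular --- function on $Z_2$, so $e^u\,\tfrac{\partial}{\partial z} \wedge \tfrac{\partial}{\partial u}$ is a holomorphic symplectic structure on $Z_2$ that is not a constant multiple of the canonical one.
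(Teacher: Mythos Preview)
Your argument is correct and follows essentially the same route as the paper's main proof: identify holomorphic symplectic structures with nowhere-vanishing sections of $\Wedge^2\mathcal T_{Z_k}\simeq\mathcal O_{Z_k}(2-k)$, use $\Pic(Z_k)\simeq\mathbb Z$ to conclude this forces $k=2$, and for the uniqueness observe that an algebraic coefficient function on $U\simeq\mathbb C^2$ with no zeros must be a nonzero constant. Your explicit example $e^u$ nicely fleshes out the paper's parenthetical remark about the analytic case; the paper also records a short alternative proof of the first claim via the identification of $Z_k$ with the minimal resolution of $\mathbb C^2/\Gamma$ and the criterion $\Gamma\subset\SL(2,\mathbb C)$.
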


\begin{proof}
A holomorphic symplectic structure corresponds to a nowhere vanishing  global holomorphic section of $\Wedge^2 \mathcal T_{Z_k} \simeq \mathcal O (-k{+}2)$,  which exists only when this bundle is trivial, {\it i.e.}\ when $k = 2$.

For $Z_2$, the canonical non-degenerate Poisson structure is given by $(1, -1)$, so that in coordinates the bracket of two functions $f, g$ is given by
\begin{align*}
\{ f, g \}\vert_U &= \phantom{-} \big\langle \tfrac{\partial}{\partial z} \wedge \tfrac{\partial}{\partial u}, \mathrm d f_U \wedge \mathrm d g_U \big\rangle \\
\{ f, g \}\vert_V &= -\big\langle \tfrac{\partial}{\partial \xi} \wedge \tfrac{\partial}{\partial v}, \mathrm d f_V \wedge \mathrm d g_U \big\rangle.
\end{align*}
If a non-degenerate Poisson structure on $Z_2$ is algebraic, {\it i.e.}\ an algebraic section of $\Wedge^2 \mathcal T_{Z_2}$, it is a constant multiple of the canonical non-degenerate Poisson structure, as every non-constant polynomial in $z, u$ has at least one zero. (This does not hold in the analytic category, as there are non-constant non-vanishing complex analytic sections.)
\end{proof}

\begin{proof}[Alternative proof.]
We shall see in \S\ref{instantons} that $Z_k$ is the minimal resolution of the $\frac1k (1,1)$ surface singularity $X_k \simeq \mathbb C^2 / \Gamma$, where $\Gamma$ is generated by $\gamma = \left( \begin{smallmatrix} \omega & 0 \\ 0 & \omega \end{smallmatrix} \right)$ for $\omega$ a primitive $k$th root of unity. A resolution of $\mathbb C^2 / \Gamma$ admits a holomorphic symplectic form if and only if $\Gamma \subset \SL (2, \mathbb C)$. However, $\det \gamma = \omega^2 = 1$ if and only if $k = 2$. \end{proof}

\begin{remark} 
Here we discuss the case of {\it degenerate} Poisson structures on $Z_k$, leaving the holomorphic symplectic case on $Z_2$ for future work. More precisely, our construction works for Poisson structures tangent to a fibre of $\pi \colon Z_k \to \mathbb P^1$ (see Def.~\ref{tangent} and Prop.~\ref{extends}), which the holomorphic symplectic structure on $Z_2$ does not satisfy ({\it cf.}\ Rem.~\ref{doesnotextendz2}).
\end{remark}

\section{Deformation quantization and star products}
\label{quantization}

Let $\hbar$ be a formal variable and denote by $\mathcal O_X \llrr{\hbar}$ the completed tensor product $\mathcal O_X \hatotimes \mathbb C \llrr{\hbar}$, viewed as a sheaf of $\mathbb C \llrr{\hbar}$-vector spaces on $X$, where a ``section'' over an open set $U$ is given by a formal power series $f = \sum_{n = 0}^\infty f_n \hbar^n$ with each $f_n \in \mathcal O_X (U)$. We shall turn $\mathcal O_X \llrr{\hbar}$ into a sheaf of associative $\mathbb C \llrr{\hbar}$-algebras by formally deforming the usual commutative product on functions. The augmentation $\mathbb C \llrr{\hbar} \to \mathbb C$ induces an augmentation $\mathcal O_X \llrr{\hbar} \to \mathcal O_X$.

\begin{definition}
\label{defstarproduct}
A {\it star product} on a complex manifold (or smooth complex algebraic variety) $X$ is a $\mathbb C \llrr{\hbar}$-bilinear associative product
\begin{align*}
\star \colon \mathcal O_X \llrr{\hbar} \times \mathcal O_X \llrr{\hbar} \to \mathcal O_X \llrr{\hbar}
\end{align*}
which is of the form
\[
(f, g) \mapsto fg + \sum_{n=1}^\infty B_n (f,g) \, \hbar^n
\]
where the $B_n$ are bidifferential operators, {\it i.e.}\ bilinear operators which are differential operators in both arguments. 
\end{definition}

\begin{definition}\label{ge}
\cite[\S 0.1]{yekutieli}
Two star products $\star, \star'$ on $X$ are said to be {\it gauge equivalent}, if there exists an isomorphism $(\mathcal O_X \llrr{\hbar}, \star) \simeq (\mathcal O_X \llrr{\hbar}, \star')$ which commutes with the augmentations $\mathcal O_X \llrr{\hbar} \to \mathcal O_X$.
\end{definition}

\begin{definition}
\label{dq}
Let $(X, \sigma)$ be a holomorphic Poisson manifold with associated Poisson bracket $\{ \blank {,} \blank \}_\sigma$. A {\it deformation quantization} of $(X, \sigma)$ is a pair $(X, \star_\sigma)$, where $\star_\sigma$ is a star product on $X$ with $B_1 (f, g) = \{ f, g \}_\sigma$, that is 
\[
f \star_\sigma g = f g + \{ f, g \}_\sigma \hbar + \dotsb
\]
We set $\mathcal A^\sigma := (\mathcal O \llrr{\hbar},\star_\sigma)$ the sheaf of formal functions with holomorphic coefficients on the quantization $(X, \star_\sigma)$.

We call $\mathcal Z_k (\sigma) = (Z_k, \mathcal A^\sigma)$ a {\it noncommutative deformation} of $Z_k$. As we usually work with a specified fixed Poisson structure, we usually use the abbreviated notations $\mathcal A$, $\{ \blank {,} \blank \}$ and $\star$.
\end{definition}

The existence of star products was first proved in the $C^\infty$ setting in Kontsevich's seminal paper \cite{kontsevich1} and later generalized to the algebro-geometric setting \cite{kontsevich2,yekutieli}. We quote this generalization in a simplified form.

\begin{theorem}
\label{twisteddeformationquantization}
\cite[Cor.\ 11.2]{yekutieli}
Let $X$ be a complex algebraic variety with structure sheaf $\mathcal O_X$ and assume that $\H^1 (X, \mathcal O_X)$ and $\H^2 (X, \mathcal O_X)$ vanish. Then there is a bijection
\[
\{ \text{\rm Poisson deformations of $\mathcal O_X$} \} / {\sim} \; \leftrightarrow \; \{ \text{\rm associative deformations of $\mathcal O_X$} \} / {\sim}
\]
where $\sim$ denotes gauge equivalence.
\end{theorem}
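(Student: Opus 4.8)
The plan is to prove this exactly as it is proved in \cite{kontsevich2, yekutieli}, namely by globalizing Kontsevich's formality theorem and reading the bijection off the induced map on Maurer--Cartan elements. First I would present both sides as deformation problems governed by sheaves of differential graded Lie algebras: flat associative $\mathbb C\llrr{\hbar}$-deformations of $\mathcal O_X$ modulo gauge equivalence are the $\hbar$-adic Maurer--Cartan elements modulo gauge of the (shifted, $\hbar$-adically completed) sheaf of polydifferential operators $\mathcal D_{\mathrm{poly}}$, with its Hochschild differential and Gerstenhaber bracket, while Poisson deformations of $\mathcal O_X$ modulo gauge are the Maurer--Cartan elements modulo gauge of the polyvector fields $\mathcal T_{\mathrm{poly}} = \bigoplus_{p\ge 0}\Wedge^{p+1}\mathcal T_X[-p]$, with zero differential and the Schouten--Nijenhuis bracket --- a first-order solution there being exactly a bivector $\sigma$ with $[\sigma,\sigma] = 0$, i.e.\ a Poisson structure. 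Granting a global $L_\infty$-quasi-isomorphism $\mathcal T_{\mathrm{poly}} \to \mathcal D_{\mathrm{poly}}$ compatible with these resolutions, the asserted bijection then follows from the standard fact that an $L_\infty$-quasi-isomorphism of $\hbar$-adically complete filtered DGLAs induces a bijection on Maurer--Cartan sets modulo gauge; since the linear term of Kontsevich's morphism is the (anti)symmetrisation, this bijection automatically sends $\sigma$ to a star product with $B_1(f,g) - B_1(g,f) = \{f,g\}_\sigma$, and conversely recovers a Poisson structure from an associative deformation by antisymmetrising its leading term.

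So the real work is globalizing Kontsevich's \emph{local} formality quasi-isomorphism. Over an affine open $U \subset X$ the formality theorem provides such a morphism, but only after a choice of auxiliary data (a formal coordinate system, equivalently a torsion-free connection on $\mathcal T_X\vert_U$), and the space of such choices is contractible, so locally the morphism is canonical up to homotopy. To patch the local morphisms into a global one I would run a \v{C}ech-type descent over an affine cover, or equivalently follow the Fedosov--Dolgushev route: resolve $\mathcal T_{\mathrm{poly}}$ and $\mathcal D_{\mathrm{poly}}$ by the de Rham complexes of the bundles of $\infty$-jets and apply fibrewise formality with respect to a connection assembled from local ones. Either way the discrepancies between the local choices on double and triple overlaps are measured by \v{C}ech cocycles valued in the sheaf $\mathcal O_X$ (the unit, or centre, of the Hochschild complex), so the obstruction to an honest global $L_\infty$-morphism --- and hence the only ambiguity in the resulting deformation --- lives in $\H^1(X,\mathcal O_X)$ and $\H^2(X,\mathcal O_X)$; in general one only obtains a deformation of $\mathcal O_X$ as a \emph{twisted} sheaf, i.e.\ as a gerbe. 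The hypotheses $\H^1(X,\mathcal O_X) = \H^2(X,\mathcal O_X) = 0$ make all such twists trivial (equivalently, force every twisted deformation to be equivalent to an untwisted one), the gluing goes through, the global formality morphism exists, and the bijection follows.

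The hard part is precisely this gluing: proving that the \emph{only} obstructions to assembling the homotopically-canonical local formality quasi-isomorphisms into a global one are the two cohomology groups in the hypothesis, while keeping track of the higher ($2$-categorical, gerbe-theoretic) bookkeeping so that ``modulo gauge'' is respected on both sides throughout. This is the technical core of \cite[\S\S\,7--11]{yekutieli}; the remaining ingredients --- the DGLA description of deformations, Kontsevich's formality theorem on affine schemes, and the invariance of $\mathrm{MC}/{\sim}$ under $L_\infty$-quasi-isomorphism --- are by now standard.
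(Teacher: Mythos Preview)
The paper does not give its own proof of this theorem: it is quoted verbatim as \cite[Cor.~11.2]{yekutieli} and used as a black box, so there is no ``paper's proof'' to compare against. Your outline is a faithful high-level sketch of the strategy in \cite{kontsevich2,yekutieli} --- recast both deformation problems as Maurer--Cartan problems for $\mathcal T_{\mathrm{poly}}$ and $\mathcal D_{\mathrm{poly}}$, globalize Kontsevich's local formality $L_\infty$-quasi-isomorphism, and observe that the gluing obstructions (the ``twists'' that in general force one to work with gerbes) live in $\H^1(X,\mathcal O_X)$ and $\H^2(X,\mathcal O_X)$ --- and you correctly identify the gluing/descent step as the technical heart. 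For the purposes of this paper that is more than enough; if you actually intend to write out a self-contained proof, be aware that making the sentence ``the obstructions live in $\H^i(X,\mathcal O_X)$'' precise is exactly the long $2$-categorical argument of \cite[\S\S7--11]{yekutieli}, not a one-line remark.
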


Note that the surfaces $Z_k$ satisfy the hypothesis of Thm.~\ref{twisteddeformationquantization}.

\begin{remark}
Working with the sheaf of algebras $(\mathcal O_X \llrr{\hbar}, \star)$ we are restricting ourselves to deformations which are in some sense ``purely noncommutative''. More generally \cite{kontsevich2,kashiwaraschapira,yekutieli}, in the non-affine setting one may consider formal deformations of $\mathcal O_X$, which do not necessarily have $\mathcal O_X \llrr{\hbar}$ as underlying sheaf of $\mathbb C \llrr{\hbar}$-vector spaces, but could simultaneously deform $\mathcal O_X$ in some commutative direction, corresponding to simultaneously deforming the restriction morphisms of the sheaf $\mathcal O_X$. Although $Z_k$ does admit commutative deformations, we are restricting ourselves to these purely noncommutative directions for two reasons. On the one hand, we expect that turning on a commutative direction of deformation would imply that the moduli spaces of vector bundles become very small or even trivial, as is the case for the purely commutative deformations \cite{BrG}. On the other hand, it seems difficult to obtain explicit simultaneous deformations in the generality we achieve in this article, as it was shown in \cite[\S 5.3]{barmeierfregier} that simultaneous commutative and noncommutative deformations of $Z_k$ may be obstructed.
\end{remark}

\subsection{The Kontsevich star product on $\mathbb C^d$}
\label{kontsevichstarproduct}

As part of an explicit quasi-isomorphism of L$_\infty$ algebras in the proof of his formality theorem, Kontsevich \cite{kontsevich1} gave an explicit construction for a star product on $(\mathbb R^d, \pi)$ for some (real) Poisson structure $\pi$. The formula applies without change to the complex setting for a (holomorphic) Poisson structure on $\mathbb C^d$, which we will use in \S \ref{quantizableimmersions} to give explicit star products on $Z_k$.

We briefly recall the definition of this star product and refer to \cite{kontsevich1,kontsevich2} for details.

\begin{definition}
Let $\sigma$ be a Poisson structure on $\mathbb C^d$. The {\it Kontsevich star product} $\star^{\mathrm K}_\sigma$ on $\mathbb C^d$ is given by
\begin{equation}
\label{kontsevichstar}
f \star^{\mathrm K}_\sigma g = f g + \sum_{n = 1}^\infty \hbar^n \sum_{\Gamma \in \mathfrak G_{n,2}} w_\Gamma \, B_\Gamma (f, g)
\end{equation}
where
\begin{itemize}
\item $\mathfrak G_{n,2}$ is the set of {\it admissible} graphs with $n$ unfilled (``first type'') and $2$ filled (``second type'') vertices,
\item $B_\Gamma$ is the bidifferential operator for $\sigma$ associated to the graph $\Gamma$, and
\item $w_\Gamma$ is the {\it weight} of the graph $\Gamma$ obtained as the integral over a certain configuration space.
\end{itemize}
\end{definition}

An admissible graph $\Gamma \in \mathfrak G_{n,2}$ has two filled vertices representing the two entries of $B_\Gamma$ and $n$ unfilled vertices. Arrows which start at unfilled vertices represent derivatives of the target of the arrow. We shall use the notation $\partial_i$ for the derivative with respect to the $i$th coordinate of $\mathbb C^d$ and write $\sigma^{ij}$ for the coefficient function of $\partial_i \wedge \partial_j$. The $\sigma^{ij}$ are holomorphic functions on $\mathbb C^d$ and define a skew-symmetric $d \times d$ matrix.

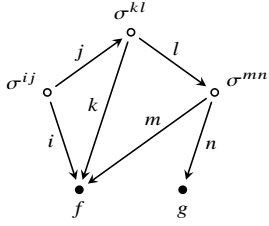
\begin{figure}
\begin{tikzpicture}[x=3.3em,y=3.3em]
\draw[line width=.6pt, fill=white]         (18:1) circle(0.3ex);
\node[shape=circle, scale=0.8](right)  at  (18:1) {};
\node[font=\scriptsize]                at  (18:1.4) {$\sigma^{mn}$\strut};
\draw[line width=.6pt, fill=white]         (90:1) circle(0.3ex);
\node[shape=circle, scale=0.8](middle) at  (90:1) {};
\node[font=\scriptsize]                at  (90:1.25) {$\sigma^{kl}$};
\draw[line width=.6pt, fill=white]        (162:1) circle(0.3ex);
\node[shape=circle, scale=0.8](left)   at (162:1) {};
\node[font=\scriptsize]                at (162:1.3) {$\sigma^{ij}$\strut};
\draw[line width=.6pt, fill=black]        (234:1) circle(0.3ex);
\node[shape=circle, scale=0.8](f)      at (234:1) {};
\draw[line width=.6pt, fill=black]        (306:1) circle(0.3ex);
\node[shape=circle, scale=0.8](g)      at (306:1) {};
\node[font=\scriptsize,below]     at (f) {$f$\strut};
\node[font=\scriptsize,below]     at (g) {$g$\strut};
\path[-stealth, line width=.6pt, line cap=round] (left) edge
node[font=\scriptsize,pos=.5,left=-.1ex] {$i$} (f);
\path[-stealth, line width=.6pt, line cap=round] (left) edge
node[font=\scriptsize,pos=.37,above=-.15ex] {$j$} (middle);
\path[-stealth, line width=.6pt, line cap=round] (middle) edge
node[font=\scriptsize,pos=.45,left=-.1ex] {$k$} (f);
\path[-stealth, line width=.6pt, line cap=round] (middle) edge 
node[font=\scriptsize,pos=.55,above] {$l$} (right);
\path[-stealth, line width=.6pt, line cap=round] (right) edge 
node[font=\scriptsize,pos=.45,above=.2ex] {$m$} (f);
\path[-stealth, line width=.6pt, line cap=round] (right) edge 
node[font=\scriptsize,pos=.55,right=-.15ex] {$n$} (g);
\end{tikzpicture}
\caption{The graph $\Gamma$ of Ex.~\ref{examplegraph}.}
\label{figureexamplegraph}
\end{figure}

\begin{example}
\label{examplegraph}
The bidifferential operator for the graph $\Gamma$ of Fig.~\ref{figureexamplegraph} is given by
\[
B_\Gamma (f, g) = \sum_{1 \leq i,j,k,l,m,n \leq d} \sigma^{ij} \, \partial_j (\sigma^{kl}) \, \partial_l (\sigma^{mn}) \, \partial_i \partial_k \partial_m (f) \, \partial_n (g).
\]
\end{example}

The graphs with arrows ending in unfilled vertices represent bidifferential operators which take derivatives of $\sigma^{ij}$. 
When the Poisson structure $\sigma$ is constant and non-degenerate, {\it i.e.}\
\begin{flalign*}
&& \sigma = \sum_{i,j} \sigma^{ij} \; \partial_i \wedge \partial_j && \mathllap{\sigma^{ij}=-\sigma^{ji} \in \mathbb C}
\end{flalign*}
graphs with arrows ending in unfilled vertices represent the zero operator and thus, for each $n$, there is only one graph in $\mathfrak G_{n,2}$ which contributes to $B_n$ (see Fig.~\ref{moyalgraphs}). For $\sigma$ constant, the Kontsevich star product $\star^{\mathrm K}_\sigma$ then coincides with the {\it Moyal product} $\star^{\mathrm M}_\sigma$ given by
\begin{align}
\label{moyal}
f \star^{\mathrm M}_\sigma g &= fg + \hbar \displaystyle\sum_{i,j} \sigma^{ij} \; \partial_i(f) \; \partial_j(g) + \frac{\hbar^2}{2} \sum_{i,j,k,l} \sigma^{ij} \sigma^{kl} \; \partial_i\partial_k(f) \; \partial_j \partial_l (g) + \dotsb \notag\\
&= \sum_{n=0}^{\infty} \frac{\hbar^n}{n!} \sum_{i_1,\dotsc,i_n,j_1,\dotsc,j_n} \left( \prod_{k=1}^n \sigma^{i_k j_k} \right) \times \left( \prod_{k=1}^n \partial_{i_k} \right) (f) \times\left( \prod_{k=1}^n \partial_{j_k} \right) (g),
\end{align}
where the symbol $\times$ denotes the usual product.

\begin{figure}
\begin{tikzpicture}[x=3em,y=3em]
\draw[line width=.6pt, fill=white] (.5,.886) circle(0.3ex);
\node[shape=circle, scale=0.8](one) at (.5,.886) {};
\draw[line width=.6pt, fill=black] (1,0) circle(0.3ex);
\node[shape=circle, scale=0.8](f) at (1,0) {};
\draw[line width=.6pt, fill=black] (0,0) circle(0.3ex);
\node[shape=circle, scale=0.8](g) at (0,0) {};
\path[-stealth, line width=.6pt, line cap=round] (one) edge (f);
\path[-stealth, line width=.6pt, line cap=round] (one) edge (g);
\end{tikzpicture}
\hspace{2.5em}
\begin{tikzpicture}[x=3em,y=3em]
\draw[line width=.6pt, fill=white] (0,1) circle(0.3ex);
\node[shape=circle, scale=0.8](one) at (0,1) {};
\draw[line width=.6pt, fill=white] (1,1) circle(0.3ex);
\node[shape=circle, scale=0.8](two) at (1,1) {};
\draw[line width=.6pt, fill=black] (1,0) circle(0.3ex);
\node[shape=circle, scale=0.8](f) at (1,0) {};
\draw[line width=.6pt, fill=black] (0,0) circle(0.3ex);
\node[shape=circle, scale=0.8](g) at (0,0) {};
\path[-stealth, line width=.6pt, line cap=round] (one) edge (f);
\path[-stealth, line width=.6pt, line cap=round] (one) edge (g);
\path[-stealth, line width=.6pt, line cap=round] (two) edge (f);
\path[-stealth, line width=.6pt, line cap=round] (two) edge (g);
\end{tikzpicture}
\hspace{2.5em}
\begin{tikzpicture}[x=3em,y=3em]
\draw[line width=.6pt, fill=white] (162:.851) circle(0.3ex);
\node[shape=circle, scale=0.8](one) at (162:.851) {};
\draw[line width=.6pt, fill=white] (90:.851) circle(0.3ex);
\node[shape=circle, scale=0.8](two) at (90:.851) {};
\draw[line width=.6pt, fill=white] (18:.851) circle(0.3ex);
\node[shape=circle, scale=0.8](three) at (18:.851) {};
\draw[line width=.6pt, fill=black] (234:.851) circle(0.3ex);
\node[shape=circle, scale=0.8](f) at (234:.851) {};
\draw[line width=.6pt, fill=black] (306:.851) circle(0.3ex);
\node[shape=circle, scale=0.8](g) at (306:.851) {};
\path[-stealth, line width=.6pt, line cap=round] (one) edge (f);
\path[-stealth, line width=.6pt, line cap=round] (one) edge (g);
\path[-stealth, line width=.6pt, line cap=round] (two) edge (f);
\path[-stealth, line width=.6pt, line cap=round] (two) edge (g);
\path[-stealth, line width=.6pt, line cap=round] (three) edge (f);
\path[-stealth, line width=.6pt, line cap=round] (three) edge (g);
\end{tikzpicture}
\hspace{2.5em}
\begin{tikzpicture}[x=3em,y=3em]
\draw[line width=.6pt, fill=white] (60:1) circle(0.3ex);
\node[shape=circle, scale=0.8](one) at (60:1) {};
\draw[line width=.6pt, fill=white] (120:1) circle(0.3ex);
\node[shape=circle, scale=0.8](two) at (120:1) {};
\draw[line width=.6pt, fill=white] (180:1) circle(0.3ex);
\node[shape=circle, scale=0.8](three) at (180:1) {};
\draw[line width=.6pt, fill=white] (0:1) circle(0.3ex);
\node[shape=circle, scale=0.8](four) at (0:1) {};
\draw[line width=.6pt, fill=black] (240:1) circle(0.3ex);
\node[shape=circle, scale=0.8](f) at (240:1) {};
\draw[line width=.6pt, fill=black] (300:1) circle(0.3ex);
\node[shape=circle, scale=0.8](g) at (300:1) {};
\path[-stealth, line width=.6pt, line cap=round] (one) edge (f);
\path[-stealth, line width=.6pt, line cap=round] (one) edge (g);
\path[-stealth, line width=.6pt, line cap=round] (two) edge (f);
\path[-stealth, line width=.6pt, line cap=round] (two) edge (g);
\path[-stealth, line width=.6pt, line cap=round] (three) edge (f);
\path[-stealth, line width=.6pt, line cap=round] (three) edge (g);
\path[-stealth, line width=.6pt, line cap=round] (four) edge (f);
\path[-stealth, line width=.6pt, line cap=round] (four) edge (g);
\end{tikzpicture}
\caption{The graphs in $\mathfrak G_{n,2}$ for $n = 1, 2, 3, 4$ contributing to $B_n$ in the Moyal product.}
\label{moyalgraphs}
\end{figure}
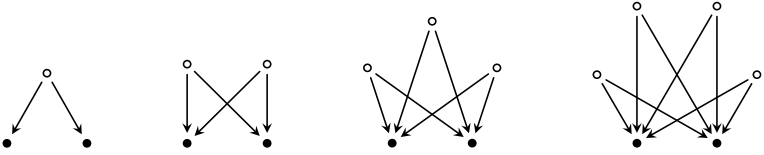

In general a holomorphic Poisson structure $\sigma$ will not be constant, not even locally. (For the surfaces $Z_k$ see Lem.~\ref{poissonstructures} and Prop.~\ref{holomorphicsymplectic}.) In this case the formula for the Kontsevich star product also involves derivatives of the coefficient functions $\sigma^{ij}$.

\begin{lemma}
\label{secondorder}
\cite{dito}
Up to second order in $\hbar$ the Kontsevich star product for $\sigma$ on $\mathbb C^d$ is given by
\begin{flalign*}
&& f \star^{\mathrm K}_\sigma g ={}& f g \\
&& &+ \hbar \displaystyle\sum_{i,j} \sigma^{ij} \; \partial_i(f) \; \partial_j(g) &
\mathllap{\begin{tikzpicture}[x=2em,y=2em,baseline=.7em]
\draw[line width=.6pt, fill=black] (1,0) circle(0.25ex);
\draw[line width=.6pt, fill=black] (0,0) circle(0.25ex);
\draw[line width=.6pt, fill=white] (.5,.886) circle(0.25ex);
\node[shape=circle, scale=0.8](f) at (0,0) {};
\node[shape=circle, scale=0.8](g) at (1,0) {};
\node[shape=circle, scale=0.8](one) at (.5,.886) {};
\path[-stealth, line width=.6pt, line cap=round] (one) edge (f);
\path[-stealth, line width=.6pt, line cap=round] (one) edge (g);
\end{tikzpicture}}
\\
&& &+ \frac{\hbar^2}2 \sum_{i,j,k,l} \sigma^{ij} \sigma^{kl} \; \partial_i \partial_k (f) \; \partial_j \partial_l (g) &
\mathllap{\begin{tikzpicture}[x=2em,y=2em,baseline=.7em]
\draw[line width=.6pt, fill=black] (1,0) circle(0.25ex);
\draw[line width=.6pt, fill=black] (0,0) circle(0.25ex);
\draw[line width=.6pt, fill=white] (0,1) circle(0.25ex);
\draw[line width=.6pt, fill=white] (1,1) circle(0.25ex);
\node[shape=circle, scale=0.7](f) at (0,0) {};
\node[shape=circle, scale=0.7](g) at (1,0) {};
\node[shape=circle, scale=0.7](L) at (0,1) {};
\node[shape=circle, scale=0.7](R) at (1,1) {};
\path[-stealth, line width=.6pt, line cap=round] (L) edge (f);
\path[-stealth, line width=.6pt, line cap=round] (L) edge (g);
\path[-stealth, line width=.6pt, line cap=round] (R) edge (f);
\path[-stealth, line width=.6pt, line cap=round] (R) edge (g);
\end{tikzpicture}}
\\
&& &+ \frac{\hbar^2}3 \sum_{i,j,k,l} \sigma^{ij} \; \partial_i (\sigma^{kl}) \; \partial_j \partial_l (f) \; \partial_k (g) &
\mathllap{\begin{tikzpicture}[x=2em,y=2em,baseline=.7em]
\draw[line width=.6pt, fill=black] (1,0) circle(0.25ex);
\draw[line width=.6pt, fill=black] (0,0) circle(0.25ex);
\draw[line width=.6pt, fill=white] (0,1) circle(0.25ex);
\draw[line width=.6pt, fill=white] (1,1) circle(0.25ex);
\node[shape=circle, scale=0.7](f) at (0,0) {};
\node[shape=circle, scale=0.7](g) at (1,0) {};
\node[shape=circle, scale=0.7](L) at (0,1) {};
\node[shape=circle, scale=0.7](R) at (1,1) {};
\path[-stealth, line width=.6pt, line cap=round] (L) edge (f);
\path[-stealth, line width=.6pt, line cap=round] (L) edge (R);
\path[-stealth, line width=.6pt, line cap=round] (R) edge (f);
\path[-stealth, line width=.6pt, line cap=round] (R) edge (g);
\end{tikzpicture}}
\\
&& &+ \frac{\hbar^2}3 \sum_{i,j,k,l} \sigma^{kl} \; \partial_k (\sigma^{ij}) \; \partial_i (f) \; \partial_j \partial_l (g) &
\mathllap{\begin{tikzpicture}[x=2em,y=2em,baseline=.7em]
\draw[line width=.6pt, fill=black] (1,0) circle(0.25ex);
\draw[line width=.6pt, fill=black] (0,0) circle(0.25ex);
\draw[line width=.6pt, fill=white] (0,1) circle(0.25ex);
\draw[line width=.6pt, fill=white] (1,1) circle(0.25ex);
\node[shape=circle, scale=0.7](f) at (0,0) {};
\node[shape=circle, scale=0.7](g) at (1,0) {};
\node[shape=circle, scale=0.7](L) at (0,1) {};
\node[shape=circle, scale=0.7](R) at (1,1) {};
\path[-stealth, line width=.6pt, line cap=round] (R) edge (g);
\path[-stealth, line width=.6pt, line cap=round] (R) edge (L);
\path[-stealth, line width=.6pt, line cap=round] (L) edge (g);
\path[-stealth, line width=.6pt, line cap=round] (L) edge (f);
\end{tikzpicture}}
\\
&& &- \frac{\hbar^2}6 \sum_{i,j,k,l} \partial_l (\sigma^{ij}) \; \partial_j (\sigma^{kl}) \; \partial_i (f) \; \partial_k (g) &
\mathllap{\begin{tikzpicture}[x=2em,y=2em,baseline=.7em]
\draw[line width=.6pt, fill=black] (1,0) circle(0.25ex);
\draw[line width=.6pt, fill=black] (0,0) circle(0.25ex);
\draw[line width=.6pt, fill=white] (0,1) circle(0.25ex);
\draw[line width=.6pt, fill=white] (1,1) circle(0.25ex);
\node[shape=circle, scale=0.7](f) at (0,0) {};
\node[shape=circle, scale=0.7](g) at (1,0) {};
\node[shape=circle, scale=0.7](L) at (0,1) {};
\node[shape=circle, scale=0.7](R) at (1,1) {};
\path[-stealth, line width=.6pt, line cap=round] (L) edge[out=15,   in=165] (R);
\path[-stealth, line width=.6pt, line cap=round] (L) edge (f);
\path[-stealth, line width=.6pt, line cap=round] (R) edge[out=-165, in=-15] (L);
\path[-stealth, line width=.6pt, line cap=round] (R) edge (g);
\end{tikzpicture}}
\\
&& &+ \dotsb
\end{flalign*}
\end{lemma}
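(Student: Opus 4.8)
The plan is to unwind Kontsevich's formula~(\ref{kontsevichstar}) and discard every graph with more than two unfilled vertices, so that only $\mathfrak G_{1,2}$ and $\mathfrak G_{2,2}$ can contribute. The first step is to note two restrictions on the graphs that can contribute a nonzero term: each admissible graph that does not have at least one arrow landing on $f$ and at least one landing on $g$ has vanishing weight (this is the normalization built into the construction, and is what forces $f\star^{\mathrm K}_\sigma 1 = f = 1\star^{\mathrm K}_\sigma f$), and no two arrows may share both their source and their target. With these restrictions $\mathfrak G_{1,2}$ contains a single graph --- one unfilled vertex with an arrow to $f$ and an arrow to $g$ --- whose bidifferential operator is $\sum_{i,j}\sigma^{ij}\,\partial_i(f)\,\partial_j(g)$, and whose weight, with the normalization of~(\ref{kontsevichstar}), yields $B_1(f,g)=\{f,g\}_\sigma$ as required by Def.~\ref{dq}. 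This gives the $\hbar^1$-term.

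Next I would enumerate $\mathfrak G_{2,2}$: the two unfilled vertices $p_1,p_2$ each emit two arrows to a two-element subset of the three remaining vertices, and the admissible graphs of nonvanishing weight fall into four isomorphism types. The ``wedge'' graph has both $p_i$ pointing to $f$ and to $g$; its operator is $\sum\sigma^{ij}\sigma^{kl}\,\partial_i\partial_k(f)\,\partial_j\partial_l(g)$, and it is the only one surviving when $\sigma$ is constant, matching the second-order term of the Moyal product~(\ref{moyal}). The two ``tripod'' graphs have one unfilled vertex pointing to $f$ and $g$ and the other pointing to that vertex and to $f$, respectively to $g$; reading off operators via the dictionary of Ex.~\ref{examplegraph} gives $\sum\sigma^{ij}\,\partial_i(\sigma^{kl})\,\partial_j\partial_l(f)\,\partial_k(g)$ and $\sum\sigma^{kl}\,\partial_k(\sigma^{ij})\,\partial_i(f)\,\partial_j\partial_l(g)$. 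The ``eye'' graph has $p_1$ and $p_2$ joined by a pair of opposite arrows, each also pointing to one filled vertex, giving $\sum\partial_l(\sigma^{ij})\,\partial_j(\sigma^{kl})\,\partial_i(f)\,\partial_k(g)$. Every other admissible graph at this order either omits an arrow to $f$ or to $g$ or repeats an edge and so drops out; since the four arrows are distributed among $f$, $g$ and the derivatives of the two $\sigma$'s, no graph produces a third derivative of $f$ or $g$, so this list is exhaustive.

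It then remains to compute the Kontsevich weights of these four graphs, i.e.\ the integrals of the corresponding products of angle $1$-forms over the compactified configuration spaces $C_{2,2}$. This is the one genuinely nontrivial step, and it is where I would invoke the explicit low-order evaluation of these integrals from~\cite{dito}: relative to weight $1$ for the order-one graph, the wedge graph contributes with coefficient $\tfrac12$, each tripod with coefficient $\tfrac13$, and the eye graph with coefficient $-\tfrac16$. (Alternatively, in this low degree these rational numbers are constrained --- though not fully pinned down without some integration --- by the symmetry and factorization properties of the angle forms along the boundary strata of $C_{2,2}$ together with associativity of $\star^{\mathrm K}_\sigma$ modulo $\hbar^3$.) Multiplying each $B_\Gamma$ by its coefficient and by $\hbar^2$ and adding produces exactly the displayed formula. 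I expect the weight integrals to be the main obstacle, the graph enumeration and the passage from graphs to differential operators being essentially routine combinatorics; the cleanest write-up simply quotes the computation of~\cite{dito}.
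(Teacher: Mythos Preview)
The paper does not give a proof of this lemma at all: it is stated with the citation \cite{dito} and no proof environment follows. Your proposal --- enumerate the admissible graphs in $\mathfrak G_{1,2}$ and $\mathfrak G_{2,2}$, read off the bidifferential operators, and quote \cite{dito} for the weight integrals --- is exactly the right way to flesh out such a citation, and is already more than the paper itself provides. Your graph enumeration and the identification of the four second-order graph types are correct, and your remark that the weight computation is the only nontrivial step (for which one appeals to \cite{dito}) matches the paper's intent in citing that reference.
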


To give the Kontsevich star product explicitly to higher orders one would have to compute the weights of all admissible graphs. Already for a single graph this computation is non-trivial, yet necessary even if one were restrict oneself to, say, linear Poisson structures. For example, Felder--Willwacher \cite{felderwillwacher} computed the weight of the graph shown in Fig.~\ref{felderwillwachergraph} to be $\zeta(3)^2 \big/ \pi^6$ up to rationals, omitting 50 terms known to be rational. (The precise weight was recently given as $\frac{13}{2903040} + \frac{\zeta (3)^2}{256 \pi^6}$ in \cite{BPP}, which relates the weights to multiple zeta values with integer coefficients.) As each unfilled vertex in this graph has only one ingoing arrow, the associated bidifferential operator is non-zero even for linear Poisson structures, and thus it will contribute to the expression of $B_7$ for any non-constant Poisson structure.

\begin{figure}
\begin{tikzpicture}[x=3em,y=3em]
\draw[line width=.6pt, fill=white] (330:1.467) circle(0.3ex);
\node[shape=circle, scale=0.8](seven) at (330:1.467) {};
\draw[line width=.6pt, fill=white] (10:1.467) circle(0.3ex);
\node[shape=circle, scale=0.8](six) at (10:1.467) {};
\draw[line width=.6pt, fill=white] (50:1.467) circle(0.3ex);
\node[shape=circle, scale=0.8](five) at (50:1.467) {};
\draw[line width=.6pt, fill=white] (90:1.467) circle(0.3ex);
\node[shape=circle, scale=0.8](four) at (90:1.467) {};
\draw[line width=.6pt, fill=white] (130:1.467) circle(0.3ex);
\node[shape=circle, scale=0.8](three) at (130:1.467) {};
\draw[line width=.6pt, fill=white] (170:1.467) circle(0.3ex);
\node[shape=circle, scale=0.8](two) at (170:1.467) {};
\draw[line width=.6pt, fill=white] (210:1.467) circle(0.3ex);
\node[shape=circle, scale=0.8](one) at (210:1.467) {};
\draw[line width=.6pt, fill=black] (250:1.467) circle(0.3ex);
\node[shape=circle, scale=1](f) at (250:1.467) {};
\draw[line width=.6pt, fill=black] (290:1.467) circle(0.3ex);
\node[shape=circle, scale=1](g) at (290:1.467) {};
\path[-stealth, line width=.6pt, line cap=round] (one) edge (f.165);
\path[-stealth, line width=.6pt, line cap=round] (one) edge (two.270);
\path[-stealth, line width=.6pt, line cap=round] (two.315) edge (f.130);
\path[-stealth, line width=.6pt, line cap=round] (two) edge (three);
\path[-stealth, line width=.6pt, line cap=round] (three) edge (f.95);
\path[-stealth, line width=.6pt, line cap=round] (three) edge (four);
\path[-stealth, line width=.6pt, line cap=round] (four) edge (f.60);
\path[-stealth, line width=.6pt, line cap=round] (four) edge (five);
\path[-stealth, line width=.6pt, line cap=round] (five) edge (six);
\path[-stealth, line width=.6pt, line cap=round] (five) edge (g.90);
\path[-stealth, line width=.6pt, line cap=round] (six.270) edge (seven);
\path[-stealth, line width=.6pt, line cap=round] (six.225) edge (g.55);
\path[-stealth, line width=.6pt, line cap=round] (seven.190) edge (f);
\path[-stealth, line width=.6pt, line cap=round] (seven.230) edge (g.20);
\end{tikzpicture}
\caption{The graph of \cite{felderwillwacher} contributing to $B_7$ with weight $\zeta(3)^2 /\pi^6$ up to rationals.}
\label{felderwillwachergraph}
\end{figure}
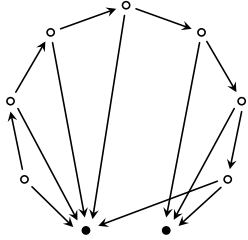

For the computations of cohomology and $\Ext^i$ groups in \S \ref{nchigherrank} we will not need the precise weights in the Kontsevich star product as it suffices to know the possible bidifferential operators that appear in its expression.

\subsection{Star products on $Z_k$}
\label{starproducts}

The cohomology of $Z_k$ can be described by the cohomology of the commutative diagram
\begin{align}
\label{opens}
\begin{tikzpicture}[baseline=-.35em,description/.style={fill=white,inner sep=2pt}]
\matrix (m) [matrix of math nodes, row sep={3em,between origins}, inner sep=2pt,
column sep={3em,between origins}, ampersand replacement=\&]
{\& Z_k \& \\ U \&\& V \\ \& U \cap V \& \\};
\path[-stealth,line width=.6pt,font=\scriptsize]
(m-3-2) edge (m-2-1)
(m-3-2) edge (m-2-3)
(m-2-1) edge (m-1-2)
(m-2-3) edge (m-1-2)
;
\end{tikzpicture}
\end{align}
where the arrows are inclusions of open sets.

Dual to (\ref{opens}), we have a commutative diagram of (commutative) algebras
\begin{align*}
\begin{tikzpicture}[description/.style={fill=white,inner sep=2pt}]
\matrix (m) [matrix of math nodes, row sep={3.25em,between origins}, inner sep=2pt,
column sep={3.25em,between origins}, ampersand replacement=\&]
{\& \mathcal O (Z_k) \& \\ \mathcal O (U) \&\& \mathcal O (V) \\ \& \mathcal O (U \cap V) \& \\};
\path[stealth-,line width=.6pt,font=\scriptsize]
(m-3-2) edge (m-2-1) 
(m-3-2) edge (m-2-3) 
(m-2-1) edge (m-1-2) 
(m-2-3) edge (m-1-2) 
;
\end{tikzpicture}
\end{align*}
where the arrows are the restriction morphisms.

To define a star product on $Z_k$, one has to find star products on $\mathcal O \llrr{\hbar} (W)$, where $W \in \{ Z_k, U, V, U \cap V \}$ making
\begin{align}
\label{diamond}
\begin{tikzpicture}[baseline=-.35em,description/.style={fill=white,inner sep=2pt}]
\matrix (m) [matrix of math nodes, row sep={3.5em,between origins}, inner sep=2pt,
column sep={3.5em,between origins}, ampersand replacement=\&]
{\& \mathcal O \llrr{\hbar} (Z_k) \& \\ \mathcal O \llrr{\hbar} (U) \&\& \mathcal O \llrr{\hbar} (V) \\ \& \mathcal O \llrr{\hbar} (U \cap V) \& \\};
\path[stealth-,line width=.6pt,font=\scriptsize]
(m-3-2) edge (m-2-1)
(m-3-2) edge (m-2-3)
(m-2-1) edge (m-1-2)
(m-2-3) edge (m-1-2)
;
\end{tikzpicture}
\end{align}
a commutative diagram of associative algebras.

We now describe quantizations of Poisson structures explicitly.

\section{Quantizing degenerate Poisson structures}
\label{quantizableimmersions}

To obtain explicit examples of deformation quantizations of Poisson structures on $Z_k$, we adapt the construction given in Kontsevich \cite{kontsevich2} of a {quantizable compactification} of a smooth affine Poisson variety. We show that the open immersion of a coordinate chart $U \subset Z_k$ is quantizable in case the Poisson structure is tangent to the divisor $D = Z_k \setminus U$,
see Def.~\ref{tangent}.

\begin{proposition}
\label{extends}
Let $\sigma $ be a Poisson structure on $Z_k$ such that is tangent to $D = Z_k \setminus U$. Then $U \subset Z_k$ is a quantizable open immersion, i.e.\ the Kontsevich star product on $U$ extends to a global star product on $Z_k$.
\end{proposition}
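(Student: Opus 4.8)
The plan is to reduce the global statement to Kontsevich's construction in \cite{kontsevich2} for a smooth affine Poisson variety together with a suitable compactification. First I would observe that $U = \mathbb C^2_{z,u}$ is smooth affine, so the Kontsevich star product $\star^{\mathrm K}_{\sigma}$ of \S\ref{kontsevichstarproduct} is defined on $\mathcal O(U)\llrr{\hbar}$, and likewise on the intersection $U\cap V = \mathbb C^*\times\mathbb C$, which is again smooth affine. The content of the proposition is that these local star products are compatible with the restriction morphisms in the diagram (\ref{diamond}) and that they descend to a star product on all of $Z_k$, in particular that they restrict correctly from $V$. Since $\H^1(Z_k,\mathcal O) = \H^2(Z_k,\mathcal O) = 0$, Thm.~\ref{twisteddeformationquantization} guarantees that \emph{some} deformation quantization of $(Z_k,\sigma)$ exists; the point here is to produce it explicitly as the Kontsevich product on the distinguished chart $U$, which is what the later sections need.

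The key steps, in order, would be: (i) record that $U\subset Z_k$ has complementary divisor $D = \{\xi = 0\}\subset V$ and that, by Prop.~\ref{tangentpoisson}, tangency of $\sigma$ to $D$ means precisely that $\sigma_V \in (\xi) = \mathcal I_D|_V$; geometrically, $\mathcal I_D$ is a Poisson ideal (Def.~\ref{tangent}). (ii) Invoke the general principle, due to Kontsevich, that if $Y\subset X$ is an open immersion of smooth varieties whose complement $D$ is a divisor to which the Poisson structure is tangent, then the Kontsevich star product on $Y$ extends to a star product on $X$: the bidifferential operators $B_n$ appearing in $\star^{\mathrm K}_\sigma$ on $U$ involve $\sigma_U$ and its derivatives, and tangency to $D$ ensures that when one rewrites these operators in the $V$-coordinates $(\xi,v) = (z^{-1}, z^k u)$ they remain regular (no negative powers of $\xi$ are introduced), because each factor of $\sigma_V$ already carries a compensating power of $\xi$. (iii) Conclude that the extended operators define a star product on $\mathcal O(V)\llrr{\hbar}$ and on $\mathcal O(Z_k)\llrr{\hbar}$ fitting into a commutative diagram (\ref{diamond}) of associative algebras, so by the sheaf-gluing described in \S\ref{starproducts} we obtain a global star product $\star_\sigma$ on $Z_k$ with $B_1(f,g) = \{f,g\}_\sigma$, i.e.\ a deformation quantization of $(Z_k,\sigma)$.

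The main obstacle is step (ii): one must check that tangency is not merely necessary but \emph{sufficient} for the poles to cancel to all orders in $\hbar$. Concretely, in $\star^{\mathrm K}_\sigma$ each admissible graph $\Gamma\in\mathfrak G_{n,2}$ contributes $B_\Gamma$ built from products of the coefficient $\sigma_U$ and iterated $z,u$-derivatives thereof, contracted with derivatives of $f,g$; transforming to $V$-coordinates multiplies by various powers of $z = \xi^{-1}$ coming from the Jacobian (\ref{jacobian}) and from $\partial_z, \partial_u$ in terms of $\partial_\xi,\partial_v$. The claim is that for $\sigma$ tangent to $D$ the total power of $\xi$ in every such term is non-negative. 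I would prove this by the following bookkeeping: write $\sigma = \sigma_V\,\partial_\xi\wedge\partial_v$ with $\sigma_V = \xi\,\tau$, $\tau$ regular on $V$; then each vertex of $\Gamma$ contributes one factor $\sigma_V$ (hence one factor $\xi$) and consumes at most the derivatives emanating from it, and the vanishing of $\sigma_V$ along $D$ — equivalently, $\mathcal I_D$ being Poisson — means that differentiating $\sigma_V$ with respect to coordinates that are \emph{not} along $D$ does not lower the $\xi$-order below zero. This is exactly the local statement that makes Kontsevich's quantizable-immersion argument go through, and I would cite \cite[\S\,?]{kontsevich2} for the general version, supplying the $Z_k$-specific weight count as the verification. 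Associativity of the extended product is automatic since it is inherited from the associativity of $\star^{\mathrm K}_\sigma$ on the dense open $U$.
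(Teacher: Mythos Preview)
Your overall plan---adapt Kontsevich's quantizable-compactification idea and verify that the bidifferential operators remain regular across the divisor $D$---is the right instinct, but the pole-count you sketch in step (ii) has a genuine gap. The Kontsevich product you are extending is built on $U$ in $(z,u)$-coordinates: each unfilled vertex contributes $\sigma_U$ and the two outgoing arrows are $\partial_z$ and $\partial_u$. When you write ``each vertex contributes one factor $\sigma_V$ (hence one factor $\xi$)'' and then track $\partial_\xi,\partial_v$-derivatives, you have silently passed to the Kontsevich product in $(\xi,v)$-coordinates, which is a \emph{different} (only gauge-equivalent) star product on $U\cap V$. To run your argument you must transform the actual $(z,u)$-operators: $\partial_z = -\xi^2\partial_\xi + k\xi v\,\partial_v$ raises $\xi$-order by $1$, while $\partial_u = \xi^{-k}\partial_v$ lowers it by $k$, and $\sigma_U$ itself has $\xi$-order $\ge k-1$ (since tangency gives $\sigma_V\in(\xi)$ and $\sigma_U=-\xi^{k-2}\sigma_V$). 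The correct per-vertex balance is then $(k-1)+1-k=0$, not the single factor of $\xi$ you invoke; your sentence about ``coordinates not along $D$'' does not capture this and would fail as written for $k\ge2$. Note also that \cite{kontsevich2} cannot be cited directly: $Z_k$ is not compact and $D$ is not ample, so the hypotheses of a quantizable compactification are not met (the paper makes exactly this point in the remark after the proposition).

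The paper avoids all of this by never leaving $U$-coordinates. One writes $\mathcal O(U\cap V)$, $\mathcal O(U)$, $\mathcal O(V)$, $\mathcal O(Z_k)$ in the monomial basis $\{z^l u^i\}_{i\ge0}$ with the bounds $l\in\mathbb Z$, $l\ge0$, $l\le ki$, $0\le l\le ki$ respectively, and checks directly that $z^{l_1}u^{i_1}\star z^{l_2}u^{i_2}$ stays inside whichever bound the inputs satisfy. For $k=1$ with constant $\sigma_U$ this is an immediate Moyal computation; for non-constant $\sigma_U$ (and for $k\ge2$, where Prop.~\ref{tangentpoisson} forces $\sigma_U\in(u)$) each additional graph vertex multiplies by the global function $\sigma_U$ while lowering the $z$- and $u$-exponents by one, and a global function satisfies all four bounds, so the product is preserved. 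This is equivalent in content to the corrected $\xi$-order count above, but it uses tangency only through the explicit shape of $\sigma_U$ and needs no coordinate change at all.
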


\begin{proof}
Let $D = Z_k \setminus U$ and write the algebras $\mathcal O (U \cap V), \mathcal O (U), \mathcal O (V), \mathcal O (Z_k)$ in the $\mathbb C$-vector space basis $\{ z^l u^i \}$ where $i \geq 0$ and
\begin{equation}
\label{boundszk}
\left\{
\begin{aligned}
-\infty < {} & l < \infty && \;\text{on $U \cap V$} \\
0 \leq {} & l < \infty && \;\text{on $U$} \\
-\infty < {} & l \leq ki && \;\text{on $V$} \\
0 \leq {} & l \leq ki && \;\text{on $U \cup V = Z_k$.}
\end{aligned}
\right.
\end{equation}
\paragraph{\bf The case $k = 1$.} As seen in Fig.~\ref{monomialspoissonzk}, a Poisson structure on $Z_1$ tangent to $D$ is generated by the monomial $1$ in $U$-coordinates, {\it i.e.}\ such a Poisson structure is of the form $\sigma_U = f$ for some global function $f \in \H^0 (Z_1, \mathcal O)$.

First assume that $f$ is constant. The Kontsevich star product on $U \simeq \mathbb C^2$ thus coincides with the Moyal product (\ref{moyal}). Denoting by $\star$ the restriction of this star product to $U \cap V$, the star product of two arbitrary monomials $z^{l_1} u^{i_1}, z^{l_2} u^{i_2} \in \mathcal O (U \cap V)$ is then given by
\begin{equation}
\label{starmonomialsz1}
z^{l_1} u^{i_1} \star z^{l_2} u^{i_2} = \sum_{n \geq 0} a_n \, z^{l_1 + l_2 - n} u^{i_1 + i_2 - n} \hbar^n
\end{equation}
for some constant coefficients $a_n \in \mathbb Q$ depending only on $l_1, l_2, i_1, i_2$. In particular,
\begin{align*}
a_0 &= 1 \\
a_1 &= l_1 i_2 - l_2 i_1 \\
a_2 &= \frac12 \big( l_1 (l_1 - 1) i_2 (i_2 - 1) - 2 l_1 l_2 i_1 i_2 + l_2 (l_2 - 1) i_1 (i_1 - 1) \big).
\end{align*}
If both $(l_1, i_1)$ and $(l_2, i_2)$ satisfy one and the same bound of (\ref{boundszk}), then so does $(\max \{ 0, l_1 + l_2 - n \}, i_1 + i_2 - n)$. Hence, $\star$ preserves the subalgebras $\mathcal O (Z_1), \mathcal O (U), $ and $\mathcal O (V)$ of $\mathcal O (U \cap V)$ and thus defines a global star product.

If $f$ is not constant, its power series expansion in $U$-coordinates is of the form $f = \sum_{i=0}^\infty \sum_{l=0}^{i} f_{il} z^l u^i$. (Note that $f$ being a global function means that the monomials appearing in the power series expansion of $f$ satisfy {\it all} of the bounds (\ref{boundszk}).) 

Now the Kontsevich star product also has contributions coming from graphs $\Gamma \in \mathfrak G_{n,2}$ with unfilled vertices representing a copy of the Poisson structure $\sigma_U = f$. However, an unfilled vertex lowers the powers of $z$ and $u$ each by $1$, but simultaneously multiplies by $\sigma_U = f$, so that the exponents still satisfy the bounds (\ref{boundszk}). \\

\paragraph{\bf The case $k \geq 2$.} Poisson structures tangent to $D$ are generated by the monomials $u, zu$ over global functions (see Prop.~\ref{tangentpoisson} and Fig.~\ref{monomialspoissonzk}). The proof is now the same as for $k = 1$ with $\sigma_U$ having vanishing constant term.
\end{proof}

\begin{remark}
The open immersion $U \subset Z_k$ satisfies some of the hypotheses of a quantizable compactification in the sense of \cite[Def.\ 4]{kontsevich2}, except for $Z_k$ not being compact and the divisor $D = Z_k \setminus U$ not being ample. However, the complement of $D$ is affine and even isomorphic to $\mathbb C^2$, which is the deciding factor for the construction. So, even though we do not have a compactification, we still obtain a quantization.

Given a quasi-projective variety $X$ and an affine subvariety $U \subset X$, one could of course consider a quantizable smooth compactification $U \subset X \subset \overline X$ and consider quantizations of Poisson structures which are tangent to $\overline X \setminus U$. However, if one is interested in quantizations of non-compact $X$, this strategy is more restrictive. For the case at hand, we have a smooth compactification $U \subset Z_k \subset F_k$ to the $k$th Hirzebruch surface, but the space of Poisson structures which are tangent to the divisor $F_k \setminus U$ is of dimension $3$ for $k = 1$ and of dimension $2$ for $k \geq 2$, whereas the space of quantizable Poisson structures for the open immersion $U \subset Z_k$ is in\-fi\-nite-dimen\-sional.
\end{remark}

\begin{example}
\label{moyalonV}
Let $\sigma$ be the Poisson structure $(1, -\xi)$ on $Z_1$ and consider the quantizable open immersion $U \subset Z_1$. Prop.~\ref{tangentpoisson} shows that $(1, -\xi)$ is tangent to $D = Z_1 \setminus U = \{ \xi = 0 \}$ and Prop.~\ref{extends} shows that there exists a global star product on $Z_1$, which can be computed in canonical coordinates by the Moyal product on $U$.

The corresponding star product on the $V$ chart is then given, up to second order, by:
\begin{align*}
f \star g = fg - \hbar &\Big( \xi \; \partial_\xi (f) \; \partial_v (g) - \xi \; \partial_v (f) \; \partial_\xi (g) \Big) \\
{} + \hbar^2 &\Big( \tfrac12 \xi^2 \; \partial_\xi^2 (f) \; \partial_v^2 (g) - \xi^2 \; \partial_\xi \partial_v (f) \; \partial_\xi \partial_v (g) + \tfrac12 \xi^2 \; \partial_v^2 (f) \; \partial_\xi^2 (g) \\
&\hspace{1em} + \xi \; \partial_\xi (f) \; \partial_v^2 (g) + \xi \; \partial_v (f) \; \partial_\xi \partial_v (g) \\
&\hspace{1em} + \xi \; \partial_\xi \partial_v (f) \; \partial_v (g) + \xi \; \partial_v^2 (f) \; \partial_\xi (g) \\
&\hspace{1em} - \partial_v (f) \; \partial_v (g) \\
&\hspace{1em} - v \; \partial_v (f) \; \partial_v^2 (g) - v \; \partial_v^2 (f) \; \partial_v (g)
\Big).
\end{align*}
To verify this, rewrite the Moyal product on $U$ using the identities
\begin{align*}
\partial_z &= -\xi^2 \partial_\xi + \xi v \partial_v \\
\partial_u &= \xi^{-1} \partial_v
\end{align*}
obtained by the change of coordinates and the commutation relations
\begin{equation}
\label{diffoprel}
[\partial_\xi, \xi] = \id, \qquad [\partial_v, v] = \id
\end{equation}
where $\xi, v$ in (\ref{diffoprel}) are thought of as differential operators of order $0$.
\end{example}

The following proposition shows that certain Poisson structures which in canonical coordinates do not satisfy the hypotheses of Prop.~\ref{extends} may still be quantized by the same method after performing a linear change of coordinates.

\begin{proposition}
Let $\sigma = (\sigma_U, \sigma_V)$ be a Poisson structure on $Z_k$ which on $U$ is of the form $\sigma_U = u^d P (z)$ for $d \geq 1$ and $P (z)$ a polynomial in $z$. Then there exists a linear change of coordinates $U \simeq U'$ such that $U' \subset Z_k$ is a quantizable open immersion.
\end{proposition}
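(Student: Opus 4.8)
The plan is to reduce the proposition to Prop.~\ref{extends} by exhibiting a linear change of coordinates on $U$ after which the new $\sigma_{U'}$ becomes a multiple of a monomial lying in the quantizable range. First I would recall the shape of the bound structure \eqref{boundszk}: a Poisson structure of the form $\sigma_U = u^d P(z)$ is automatically a global section of $\Wedge^2 \mathcal T_{Z_k} \simeq \mathcal O(-k+2)$, so its monomials $z^l u^i$ satisfy $l \le ki$, and $d \ge 1$ forces the constant term to vanish. The obstruction to applying Prop.~\ref{extends} directly is that $\sigma_U = u^d P(z)$ need not be a multiple of the specific monomials $u$ or $zu$ in the \emph{given} chart $U$ — the divisor $D = Z_k \setminus U = \{\xi = 0\}$ corresponds to the ``point at infinity'' $z = \infty$, and $\sigma$ is tangent to $D$ exactly when $\sigma_V$ is divisible by $\xi$, which translates into a constraint on the top-degree coefficient of $P$.

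The key observation is that the Poisson structures $u^d P(z)$ on $Z_k$ are tangent to every fibre $\pi^{-1}(x)$ with $x \in \mathbb C$, because $u^d$ vanishes on $\ell$ when $d \ge 1$ and, as in the proof of Prop.~\ref{extends} for $k \ge 2$, an unfilled vertex in the Kontsevich graph lowers the $u$-power by one but multiplies by $\sigma_U$, which contains $u^d$ with $d \ge 1$; so the real issue is only which affine fibre we delete. I would argue as follows: since $P(z)$ is a polynomial, choose $\lambda \in \mathbb C$ that is \emph{not} a root of $P$, and perform the linear change of coordinates $z \mapsto z' = z - \lambda$ on $\mathbb C \subset \mathbb P^1$, equivalently a Möbius transformation of $\mathbb P^1$ moving a non-root of $P$ to infinity. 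Concretely, one passes to the automorphism of $Z_k$ covering $z \mapsto \frac{1}{z - \lambda}$ on the base, which is linear in the fibre coordinate $u$ up to the cocycle $z^k$; call the resulting chart $U'$ with complementary divisor $D' = Z_k \setminus U' = \pi^{-1}(\lambda)$. Because $P(\lambda) \ne 0$, in the $V'$-coordinates $(\xi', v')$ the coefficient $\sigma_{V'} = -\xi'^{\,k-2}\sigma_{U'}$ (rewritten via the change of variables) picks up a factor of $\xi'$ from the vanishing of $u^d$ — more precisely $u = \xi'^k v'$ type relations produce enough powers of $\xi'$ — so that $\mathcal I_{D'} = (\xi')$ is a Poisson ideal, i.e.\ $\sigma$ is tangent to $D'$.

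Having shown $\sigma$ is tangent to $D' = Z_k \setminus U'$, I would simply invoke Prop.~\ref{extends}: the open immersion $U' \subset Z_k$ is then quantizable, the Kontsevich star product on $U' \simeq \mathbb C^2$ extends to a global star product on $Z_k$, and since $U \simeq U'$ via a linear (hence invertible polynomial) change of coordinates, this is exactly the assertion of the proposition. The main obstacle I anticipate is the bookkeeping in verifying tangency to $D'$ — one must check carefully that after the coordinate change the coefficient $\sigma_{V'}$ is genuinely divisible by $\xi'$, which amounts to tracking how $u^d P(z)$ transforms under $(z,u) \mapsto \bigl((z-\lambda)^{-1}, (z-\lambda)^k u\bigr)$ and using $P(\lambda) \ne 0$ to see the leading term does not cancel; once this divisibility is established, the quantizability is immediate from the already-proved Prop.~\ref{extends} and no new star-product estimates are needed.
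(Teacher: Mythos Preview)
Your proposal contains a genuine error at the crucial step. You assert that ``the Poisson structures $u^d P(z)$ on $Z_k$ are tangent to every fibre $\pi^{-1}(x)$ with $x \in \mathbb C$, because $u^d$ vanishes on $\ell$ when $d \ge 1$''. This is false: the factor $u^d$ makes $\sigma$ tangent to the \emph{zero section} $\ell = \{u = 0\}$, not to the fibres of $\pi$. Tangency to the fibre $\pi^{-1}(\lambda) = \{z = \lambda\}$ means that $(z-\lambda)$ is a Poisson ideal, and since
\[
\{u, z-\lambda\}_\sigma = -\sigma_U = -u^d P(z),
\]
this lies in $(z-\lambda)$ if and only if $P(\lambda) = 0$. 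The ideals $(u)$ and $(z-\lambda)$ are coprime, so the vanishing of $u^d$ contributes nothing here. Consequently your choice of $\lambda$ with $P(\lambda) \neq 0$ gives a divisor $D' = \pi^{-1}(\lambda)$ to which $\sigma$ is \emph{not} tangent, and Prop.~\ref{extends} does not apply. The hand-wavy bookkeeping you anticipate (``$u = \xi'^k v'$ type relations produce enough powers of $\xi'$'') cannot succeed, because when $\deg P$ is maximal the powers of $\xi'$ coming from $u^d$ are exactly cancelled by the Jacobian factor and the pole of $P$ at infinity; this is precisely why the original chart $U$ fails and a new one is needed.

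The fix is simple and is what the paper does: choose $\lambda$ to be a \emph{root} of $P$ rather than a non-root. Since $\sigma$ is a global section of $\Wedge^2 \mathcal T_{Z_k} \simeq \mathcal O(-k+2)$, the polynomial $P$ has degree at most $(d-1)k+2 \ge 2$, so (unless $\sigma$ is already tangent to $D$, in which case there is nothing to prove) $P$ has at least one root $\lambda_i$. The shift $z' = z - \lambda_i$ puts this root at the origin, so that $\sigma_U = z' u^d \prod_{j\neq i}(z' - \lambda_j)$; the divisor $\{z' = 0\} = \pi^{-1}(\lambda_i)$ is then a Poisson divisor, and the complementary $\mathbb C^2$-chart is quantizable by Prop.~\ref{extends}. (As a minor point, you also conflate the translation $z \mapsto z - \lambda$, which moves $\lambda$ to $0$, with the M\"obius map $z \mapsto 1/(z-\lambda)$, which moves $\lambda$ to $\infty$; these are different maps and only the latter changes which fibre lies at infinity.)
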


\begin{proof}
First note that by Lem.~\ref{poissonstructures}, $P (z)$ is a polynomial of degree $n = (d-1)k + 2$. We can thus write
\[
\sigma_U = u^d \prod_{1 \leq i \leq n} (z - \lambda_i).
\]
Now, choose any $\lambda_i$ and consider the linear change of coordinates $U \simeq U'$ given by $(z, u) \mapsto (z', u)$ for $z' = z + \lambda_i$. In these coordinates we have that
\[
\sigma_U = z' u^d \prod_{i \neq j} (z' - \lambda_j).
\]
Then $\sigma$ is tangent to the divisor $\{ z' = 0 \}$ and $U' \subset Z_k$ is a quantizable open immersion.
\end{proof}

\begin{corollary}
For $k \neq 2$, any linear combination of the generators of Poisson structures on $Z_k$ can be quantized via an open immersion $\mathbb C^2 \simeq U' \subset Z_k$.
\end{corollary}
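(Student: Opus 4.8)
The plan is to reduce the statement to the previous proposition, which shows that a Poisson structure of the form $\sigma_U = u^d P(z)$ can be quantized after a linear change of coordinates moving a root of $P$ to the origin. So the task is to verify that, for $k \neq 2$, \emph{every} linear combination of the module generators listed in Lem.~\ref{poissonstructures} has coefficient function $\sigma_U$ of the required shape, namely $u^d P(z)$ with $d \geq 1$ and $P$ a nonzero polynomial in $z$ alone. First I would recall that global functions $\H^0(Z_k, \mathcal O_{Z_k})$ are spanned by monomials $z^l u^i$ subject to $0 \leq l \leq ki$ (as in (\ref{boundszk})), so an arbitrary global function is a polynomial in $u, zu, z^2u, \dots, z^k u$; multiplying a generator by such a function only raises the $u$-power and keeps $\sigma_U$ of the form $u^d(\text{polynomial in }z)$. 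Hence it suffices to check the generators themselves.

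For $k = 1$ the generators are $(1, -\xi)$ and $(z, -1)$, so $\sigma_U = f$ and $\sigma_U = zg$ for global $f, g$; a general linear combination is $\sigma_U = f + zg$. Here $f, g \in \H^0(Z_1,\mathcal O)$, so $f = f_{00} + u(\cdots)$ and likewise for $g$: writing everything out, $\sigma_U$ is a polynomial in $z$ and $u$ supported on $\{0 \le l \le i+1\}$, and one must observe it can always be written as $u^d P(z)$ after factoring out the largest power of $u$ dividing it — but this is not literally automatic since $\sigma_U$ may genuinely involve both $z$ and $u$ nontrivially (e.g.\ $\sigma_U = 1 + zu$ is not of the form $u^d P(z)$). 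This is the subtle point: I would argue that the relevant fact is rather that $\sigma_U$, as an element of $\mathbb C[z,u]$, always has a factor that is linear in $z$ with coefficients depending on $u$, or more simply that after a generic linear change $z \mapsto z + \lambda$ the function $\sigma_U$ becomes divisible by the new $z$-coordinate, because $\sigma_U$ restricted to $u = 0$ is a polynomial in $z$ of positive degree (for $k=1$, by inspection of Fig.~\ref{monomialspoissonzk}, the monomial $1$ and $z$ both appear, so $\sigma_U|_{u=0}$ can be any polynomial of degree $\le 1$, which has a root). For $k \geq 3$ the generators $(u, -\xi^2 v), (zu, -\xi v), (z^2 u, -v)$ already have $\sigma_U$ divisible by $u$, so $\sigma_U = u\cdot Q(z,u)$; one then uses that $\sigma_U = u^d P(z)$ is achievable after absorbing global-function coefficients and a linear shift, invoking the previous proposition. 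The excluded case $k = 2$ is exactly the one where the generator is $(1,-1)$, giving $\sigma_U$ a nonzero constant with no room to create a divisor $\{z' = 0\}$, i.e.\ the holomorphic symplectic case.

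The key steps, in order: (1) record that global functions on $Z_k$ are polynomials in the monomials $z^l u^i$ with $0 \le l \le ki$, so multiplying generators by global functions does not leave the class of functions expressible in the form needed for the previous proposition; (2) for $k \geq 3$, observe directly that every generator's $\sigma_U$ is divisible by $u$, hence any linear combination $\sigma_U$ is divisible by $u$, and apply the previous proposition (with $d$ the exact power of $u$ dividing $\sigma_U$ and $P$ the resulting univariate factor, after checking the residual factor only involves $z$ — or, if it does not, first restrict to $u=0$ to find a root $\lambda$ and shift); (3) for $k = 1$, note $\sigma_U$ has the form $a + bz + u(\cdots)$, so $\sigma_U|_{u = 0}$ is an affine-linear polynomial in $z$, which (if nonzero and nonconstant) has a root $\lambda$; performing $z \mapsto z + \lambda$ makes $\sigma_U$ vanish along $\{z' = 0\}$, hence $\sigma$ is tangent to that divisor and Prop.~\ref{extends} applies; (4) conclude that in all cases $k \neq 2$ we obtain a quantizable open immersion $\mathbb C^2 \simeq U' \subset Z_k$ via a linear change of coordinates. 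The main obstacle I anticipate is step (2)/(3): making precise \emph{why} a general linear combination always becomes divisible by a new linear coordinate after a shift — the cleanest route is not to force the literal form $u^d P(z)$ of the earlier proposition but to directly verify tangency to a shifted fibre divisor $\{z' = 0\}$ using that $\Dgn(\sigma)$ contains (a possibly shifted) fibre $\pi^{-1}(x)$, which is exactly the hypothesis needed for Prop.~\ref{extends}; the bookkeeping is routine once one writes $\sigma_U$ in the monomial basis, but one must be careful that the linear change of coordinates on $U$ genuinely extends to a coordinate chart of $Z_k$ (it does, since a translation in $z$ extends, though it is no longer a bundle automorphism — this is why we only get $U' \cong \mathbb C^2$ and not a new $Z_k$-structure).
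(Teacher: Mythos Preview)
The paper states this as an immediate corollary without proof. The intended argument is simply: a $\mathbb C$-linear combination of the generators from Lem.~\ref{poissonstructures} has, for $k\geq 3$, the form $\sigma_U = u(a+bz+cz^2)$, which is literally $u^d P(z)$ with $d=1$, so the preceding proposition applies verbatim; for $k=1$ one has $\sigma_U=a+bz$, which vanishes along exactly one fibre of $\pi$ (at $z=-a/b$ if $b\neq 0$, or at $\xi=0$ if $b=0$), and one takes $U'$ to be the complement of that fibre and invokes Prop.~\ref{extends}.

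Your proposal reaches the same endpoint but takes a much longer route. The detour through global-function coefficients is unnecessary and is what creates your difficulties: the corollary is only about $\mathbb C$-linear combinations of the module generators, not arbitrary Poisson structures, so examples like $\sigma_U=1+zu$ that trouble you simply do not arise. Once you restrict to constant coefficients, the $k\geq 3$ case is a one-line application of the previous proposition --- there is nothing to check about ``residual factors only involving $z$'', since $\sigma_U = u(a+bz+cz^2)$ already has that form exactly.

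There are two genuine points to tighten. First, in your step~(3) you say $\sigma_U\vert_{u=0}=a+bz$ has a root ``if nonzero and nonconstant'', but you never dispatch the constant case $b=0$. That case is exactly the generator $(1,-\xi)$, already tangent to $Z_1\setminus U$ by Prop.~\ref{tangentpoisson}, so $U'=U$ works with no shift; this should be said. Second, for $k=1$ the hypothesis $d\geq 1$ of the previous proposition is \emph{not} satisfied, so you cannot literally ``reduce to the previous proposition'' as your opening sentence announces --- you must fall back on Prop.~\ref{extends} directly, which you do eventually realise. The cleaner formulation is: in all cases $k\neq 2$ the degeneracy locus of a nonzero $\mathbb C$-linear combination of generators contains some fibre $\pi^{-1}(x)$ (cf.\ Ex.~\ref{degeneracyexample}), and $U'=Z_k\setminus\pi^{-1}(x)\simeq\mathbb C^2$ is then quantizable by Prop.~\ref{extends}.
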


For the holomorphic symplectic structure on $Z_2$ the same method does not apply (see Kontsevich \cite[\S3.5]{kontsevich1} for more details).

\begin{remark}
\label{doesnotextendz2}
The Moyal product on $U \simeq \mathbb C^2$, does not extend naively to a star product on $Z_2$. In fact,  the Moyal product on $\mathcal O \llrr{\hbar} (U)$ does not preserve the subalgebra $\mathcal O \llrr{\hbar} (Z_k)$. Recall from (\ref{moyal}) that the bidifferential operators $B_n$ in the expression of the Moyal product on $\mathbb C^2_{z, u}$ are given by
\[
B_n = \frac{1}{n!} \sum_{0 \leq i \leq n} (-1)^i \binom{n}{i} \; \partial_z^{n-i} \partial_u^i (f) \; \partial_z^i \partial_u^{n-i} (g).
\]
Now consider the star product of the functions $z u, z^2 u \in \mathcal O (Z_2) \subset \mathcal O (U)$. Then
\begin{alignat*}{3}
z^2 u \star z u &= z^3 u^2 + {}& B_1 (z^2 u, z u) \, \hbar &{}+{}& B_2 (z^2 u, z u) \, \hbar^2 &{}+ \dotsb \\
&= z^3 u^2 + {}& 2 z^2 u \; \hbar &{}+{}& 2 z \; \hbar^2 &{}+ \dotsb
\end{alignat*}
But $2 z \notin \mathcal O (Z_2)$, so $z^2 u \star z u \notin \mathcal O \llrr{\hbar} (Z_k)$.
\end{remark}

\section{Geometry of commutative deformations}
\label{commutative}

In this section we summarize the known results about vector bundles and moduli for commutative deformations of $Z_k$ which will
be used to construct  the noncommutative counterparts  in \S\ref{vectorbundles}. 

As seen in \S \ref{poissongeometry}, holomorphic line bundles on $Z_k$ are classified by their first Chern class and we denote by $\mathcal O_{Z_k} (n)$ the line bundle with first Chern class $n$, omitting the subscript when it is clear from the context.
Recall that a rank $r$ bundle $E$ on $X$ is called {\it filtrable} if there exists an increasing filtration $0 = E_0 \subset E_1 \subset \dotsb \subset E_{r-1} \subset E_r = E$ of subbundles such that $E_i / E_{i-1} \in \Pic X$, where $1 \leq i \leq r$.

\begin{theorem}\cite[Lem.\ 3.1, Thm.\ 3.2]{gasparim}
\label{filtrable}
Holomorphic vector bundles on $Z_k$ are algebraic and filtrable.
\end{theorem}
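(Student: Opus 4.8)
The plan is to prove the two assertions — algebraicity and filtrability — essentially together, exploiting the very simple structure of $Z_k$ as a rank-one bundle over $\mathbb P^1$ covered by two affine charts $U,V\simeq\mathbb C^2$ with the explicit gluing $(\xi,v)=(z^{-1},z^ku)$.

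First I would reduce filtrability to the existence of a line subbundle, and then induct on the rank. So the core problem is: given a holomorphic (a priori only holomorphic) vector bundle $E$ of rank $r$ on $Z_k$, produce a sub-line-bundle $L\hookrightarrow E$ with locally free quotient. The standard device here is to restrict $E$ to the zero section $\ell\simeq\mathbb P^1$: by Grothendieck's splitting theorem $E|_\ell\simeq\bigoplus_{i=1}^r\mathcal O_{\mathbb P^1}(a_i)$, and picking the summand of largest degree $a_{\max}$ gives a nonzero map $\mathcal O_{\mathbb P^1}(a_{\max})\to E|_\ell$. The key step is to lift this to a neighbourhood — i.e.\ to show that the pullback $\pi^*\mathcal O_{\mathbb P^1}(a_{\max})=\mathcal O_{Z_k}(a_{\max})$ maps nontrivially into $E$ over all of $Z_k$ — using the formal/analytic neighbourhood theorem and the fact that $\mathcal O_{\mathbb P^1}(-k)$ is a \emph{negative} line bundle, so $\mathcal O_{Z_k}(-k{+}a_{\max})$ has enough sections on each infinitesimal neighbourhood $\ell^{(n)}$ and the relevant obstruction groups $\H^1(\ell,\pi^*(\cdot)\otimes\operatorname{Sym}^n N^\vee)$ control the extension; since $Z_k$ retracts onto $\ell$ and is itself the total space, a lifted section over the formal neighbourhood is in fact a section over $Z_k$. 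Having obtained $L=\mathcal O_{Z_k}(a_{\max})\hookrightarrow E$, one checks the quotient $E/L$ is a bundle (saturate $L$ if necessary — on a smooth surface a rank-one torsion-free quotient can be repaired, increasing $a_{\max}$, so the process terminates) and applies the induction hypothesis to $E/L$.

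For algebraicity I would then invoke GAGA-type reasoning adapted to this noncompact setting: since $E$ is filtrable, it is an iterated extension of the line bundles $\mathcal O_{Z_k}(n_i)$, all of which are algebraic (they are pullbacks of $\mathcal O_{\mathbb P^1}(n_i)$, which are algebraic). An extension of algebraic bundles by algebraic bundles is classified by $\Ext^1$, and one shows the analytic and algebraic $\Ext^1$ groups agree here — this follows because these $\Ext^1$ groups are computed by $\H^1$ of $\mathcal O_{Z_k}(m)$ for various $m$, and a direct Čech computation on the two-chart cover $\{U,V\}$ (the very computation that Lem.~\ref{j>0} and Lem.~\ref{j<0} carry out) gives the same finite-dimensional answer whether one uses holomorphic or algebraic functions, because all the bounds in (\ref{boundszk}) are polynomial. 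Hence the extension class, being algebraic, yields an algebraic bundle isomorphic to $E$.

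The main obstacle is the lifting step: going from a section/map defined only on the compact curve $\ell$ to one defined on a genuine (analytic, then Zariski) neighbourhood, and ensuring the quotient stays locally free. This is where negativity of $\mathcal O_{\mathbb P^1}(-k)$ is essential — it makes the normal bundle $N=\mathcal O_{\mathbb P^1}(-k)$ negative so that $\H^1(\ell, E|_\ell\otimes\operatorname{Sym}^n N^\vee(-a_{\max}))$ eventually vanishes and the formal lift exists and converges; and it is exactly the ingredient that fails for, say, trivial or positive normal bundles. I expect the cleanest route is to cite the cited source \cite{gasparim} for the technical neighbourhood argument and present here only the reduction (Grothendieck splitting on $\ell$, choice of maximal summand, induction on rank, Čech comparison for $\Ext^1$), noting that the explicit transition function $z^{-n}$ for $\mathcal O_{Z_k}(n)$ makes every cohomological input completely explicit.
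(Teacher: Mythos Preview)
The paper does not actually prove this theorem: it is stated with a citation to \cite[Lem.~3.1, Thm.~3.2]{gasparim} and no proof is given. So there is no ``paper's own proof'' to compare against directly.

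That said, your outline is sound and in fact matches the strategy of the cited source, as one can infer from the second proof of the noncommutative analogue (Thm.~\ref{ncfiltrable}) later in the paper: filtrability is obtained from the vanishing of $\H^i(Z_k, E\otimes\Sym^n N^*)$ for $i=1,2$ (negativity of the normal bundle), lifting the Grothendieck splitting on $\ell$ through successive infinitesimal neighbourhoods by induction on $n$; algebraicity then follows since the bundle is an iterated extension of the algebraic line bundles $\mathcal O_{Z_k}(n)$ and the analytic and algebraic $\Ext^1$ groups coincide by the explicit \v Cech computation on the two-chart cover. Your identification of the lifting step as the crux, and of negativity of $\mathcal O_{\mathbb P^1}(-k)$ as the essential hypothesis, is exactly right. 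One minor point: you should be slightly more careful that the section lifted over the \emph{formal} neighbourhood is genuinely holomorphic on all of $Z_k$ --- this uses that $Z_k$ is $1$-convex (or equivalently that the formal neighbourhood of $\ell$ determines the analytic structure, since $Z_k\to X_k$ contracts $\ell$ to a point of an affine variety); this is implicit in your phrase ``since $Z_k$ retracts onto $\ell$ and is itself the total space'' but deserves to be made precise.
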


\begin{definition}\cite{ballico}\label{type}
Let $E$ be a rank $r$ holomorphic vector bundle on $Z_k$. The restriction of $E$ to the zero section $\ell \simeq \mathbb P^1$ is a rank $r$ bundle on $\mathbb P^1$, which by Grothendieck's lemma splits as a direct sum of line bundles. Thus, $E \vert_\ell \simeq \mathcal O_{\mathbb P^1} (j_1) \oplus \dotsb \oplus \mathcal O_{\mathbb P^1} (j_r)$. We call $(j_1, \dotsc, j_r)$ the {\it splitting type} of $E$.
When $E$ is a rank $2$ bundle with first Chern class $0$, then the splitting type is $(j,-j)$ for some $j \geq 0$ and we say for short that $E$ has {\it splitting type} $j$.
\end{definition}

\begin{remark}\label{construct}
Filtrability of vector bundles on $Z_k$ implies that moduli of rank $2$ vector bundles are parametrized by classes in  $\Ext^1 (\mathcal O (j_2), \mathcal O (j_1))$ and algebraicity implies that such spaces of extensions are finite dimensional. For suitable numerical invariants or a suitable notion of stability, one may extract finite-dimensional moduli spaces from the naive quotient of the vector spaces $\Ext^1 (\mathcal O (j_2), \mathcal O (j_1))$ modulo bundle isomorphisms. 
\end{remark}

For applications to $\SU (2)$ instantons, one considers bundles with vanishing first Chern class. Thus, we study the quotient $\Ext^1 (\mathcal O (j), \mathcal O (-j)) / {\sim}$, where $\sim$ denotes bundle isomorphism. However, this quotient has an extremely complicated structure, in particular it is non-Hausdorff in the analytic topology. A stratification into Hausdorff components was presented in \cite[Thm.\ 4.15]{BGK2} by using two numerical invariants, whose sum makes up the local second Chern class (see Rem.~\ref{inv}).

To define the local Chern class, consider the (affine) surface $X_k$ obtained by contracting the zero section $\ell \subset Z_k$ to a point. Then $X_1 \simeq \mathbb C^2$ and for $k \geq 2$ one obtains the $\frac1k (1,1)$ surface singularity and $\tau \colon Z_k \rightarrow X_k$ is its (toric) resolution, which is given by inclusion of fans as shown in Fig.~\ref{fans}.

\begin{figure}
\begin{align*}
\begin{tikzpicture}[x=2em,y=2em]
\node at (.75,-.75) {{\it fan of $X_k$}};
\draw[line width=.6pt, line cap=round] (-1,3) -- (-1.16,3.48);
\draw[line width=.6pt, line cap=round] (1,0) -- (2.5,0);
\draw[-stealth,line width=.6pt, line cap=round] (0,0) -- (-.99,2.97);
\draw[-stealth,line width=.6pt, line cap=round] (0,0) -- (.97,0);
\draw[line width=.6pt, densely dotted] (-1,3) -- (0,3) -- (1,0);
\draw[line width=.6pt, fill=black] (-1,3) circle (.16ex);
\draw[line width=.6pt, fill=black] (0,0) circle (.16ex);
\draw[line width=.6pt, fill=black] (0,1) circle (.16ex);
\draw[line width=.6pt, fill=black] (0,2) circle (.16ex);
\draw[line width=.6pt, fill=black] (0,3) circle (.16ex);
\draw[line width=.6pt, fill=black] (1,0) circle (.16ex);
\draw[line width=.6pt, fill=black] (1,1) circle (.16ex);
\draw[line width=.6pt, fill=black] (1,2) circle (.16ex);
\draw[line width=.6pt, fill=black] (1,3) circle (.16ex);
\draw[line width=.6pt, fill=black] (2,0) circle (.16ex);
\draw[line width=.6pt, fill=black] (2,1) circle (.16ex);
\draw[line width=.6pt, fill=black] (2,2) circle (.16ex);
\begin{scope}[shift={(5,0)}]
\node at (.75,-.75) {{\it fan of $Z_k$}};
\draw[line width=.6pt, line cap=round] (-1,3) -- (-1.16,3.48);
\draw[line width=.6pt, line cap=round] (1,0) -- (2.5,0);
\draw[-stealth,line width=.6pt, line cap=round] (0,0) -- (-.99,2.97);
\draw[-stealth,line width=.6pt, line cap=round] (0,0) -- (.97,0);
\draw[-stealth,line width=.6pt, line cap=round] (0,0) -- (0,.97);
\draw[line width=.6pt, line cap=round] (0,1) -- (0,3.5);
\draw[line width=.6pt, fill=black] (-1,3) circle (.16ex);
\draw[line width=.6pt, fill=black] (0,0) circle (.16ex);
\draw[line width=.6pt, fill=black] (0,1) circle (.16ex);
\draw[line width=.6pt, fill=black] (0,2) circle (.16ex);
\draw[line width=.6pt, fill=black] (0,3) circle (.16ex);
\draw[line width=.6pt, fill=black] (1,0) circle (.16ex);
\draw[line width=.6pt, fill=black] (1,1) circle (.16ex);
\draw[line width=.6pt, fill=black] (1,2) circle (.16ex);
\draw[line width=.6pt, fill=black] (1,3) circle (.16ex);
\draw[line width=.6pt, fill=black] (2,0) circle (.16ex);
\draw[line width=.6pt, fill=black] (2,1) circle (.16ex);
\draw[line width=.6pt, fill=black] (2,2) circle (.16ex);
\end{scope}
\end{tikzpicture}
\end{align*}
\caption{$Z_k$ as toric resolution of $X_k$}
\label{fans}
\end{figure}
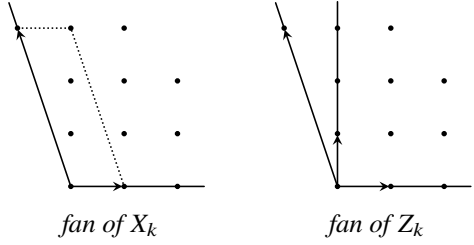

\begin{remark}
Observe that $X_k \simeq \mathbb C^2 / \Gamma$, where $\Gamma \subset \GL (2, \mathbb C)$ is a cyclic group of order $k$ with a generator acting on $\mathbb C^2$ via multiplication by $\gamma = \left( \begin{smallmatrix} \omega & 0 \\ 0 & \omega \end{smallmatrix} \right)$ for $\omega$ a primitive $k$th root of unity. Thus $X_2$ is the $A_1$ surface singularity, but for $k \geq 3$ we have that $\det \gamma = \omega^2 \neq 1$. In particular, $\Gamma \not\subset \SU (2)$ so that $Z_{\geq 3}$ is not an ALE space, see \cite[Thm.\ 1.2]{Kr}.
\end{remark}

\begin{definition}
\label{charge}
Let $E$ be a holomorphic rank $2$ bundle on $Z_k$ and let $\tau \colon Z_k \to X_k$ be the contraction map. The {\it local second Chern class} of $E$ is the local holomorphic Euler characteristic of $E$ around $\ell$, that is,
\begin{equation}
\label{numinv}
\chi (\ell, E) = h^0 (X_k, (\tau_*E)^{\vee\vee}/\tau_* E) + h^0 (X_k, \mathrm R^1\tau_*E).
\end{equation}
\end{definition}

\begin{remark}
\label{inv}
The terms on the right-hand side of (\ref{numinv}) define two independent holomorphic invariants of the vector bundle $E$. In \cite{BGK2} these invariants were called the {\it width} of $E$, since $h^0 (X_k, (\tau_* E)^{\vee\vee}/\tau_* E)$ measures the default of the direct image from being locally free, and the {\it height} of $E$, since $h^0 (X_k, \mathrm R^1 \tau_* E)$ measures how far $E$ is from being a split extension. 
These are two independent numerical invariants, and the pair stratifies the moduli spaces $\mathfrak M_j (Z_k)$ into Hausdorff components \cite[Thm.\ 4.15]{BGK2}.
\end{remark}

To describe actual moduli spaces,  \cite[Def.~5.2]{GKM} give an ad-hoc definition of stability, calling a rank $2$ vector bundle on $Z_k$ {\it (framed) stable} when it is holomorphically trivial (and framed) on $Z_k \setminus \ell$.

\begin{notation}
\label{modulic}
Denote by $\mathfrak M_j (Z_k)$ the subspace of $\Ext^1(\mathcal O_{Z_k}(j), \mathcal O_{Z_k}(-j))/{\sim}$ consisting of those classes corresponding to stable vector bundles, where $\sim$ denotes bundle isomorphism. 
\end{notation}

Moduli spaces of rank~$2$ bundles on $Z_k$ were studied in 
\cite[Thm.\ 3.5]{gasparim2} for the case $k = 1$ and in \cite[Thm.\ 4.11]{BGK2} for the cases $k \geq 1$. The moduli spaces of stable bundles with splitting type $j$ on $Z_k$,  turn out to be smooth quasi-projective varieties of dimension ${2j-k-2}$ \cite[Thm.\ 4.11]{BGK2}. In fact, we have:

\begin{theorem}\label{exist}
\cite[Thm.\ 4.11]{BGK2}
The moduli space of rank $2$ holomorphic bundles on $Z_k$ with vanishing first Chern class and 
splitting type $j$ contains an open dense subset isomorphic to $\mathbb P^{2j-k-2}$ minus a closed subvariety of codimension at least $k + 1$.
\end{theorem}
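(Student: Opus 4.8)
The plan is to reduce everything to explicit linear algebra with transition matrices in canonical coordinates. By Thm.~\ref{filtrable} a rank~$2$ bundle $E$ on $Z_k$ with $c_1(E)=0$ and splitting type $j$ is an extension $0\to\mathcal O(-j)\to E\to\mathcal O(j)\to 0$, hence is given on $U\cap V$ by a transition matrix $\left(\begin{smallmatrix}z^j & p\\ 0 & z^{-j}\end{smallmatrix}\right)$ with $p\in\mathcal O(U\cap V)=\mathbb C[z,z^{-1},u]$. First I would put $p$ into normal form: conjugating the transition matrix by upper-triangular unipotent elements of $\GL_2(\mathcal O(U))$ and $\GL_2(\mathcal O(V))$ changes $p$ by an element of $z^j\mathcal O(U)+z^{-j}\mathcal O(V)$, and the resulting quotient is $\Ext^1(\mathcal O(j),\mathcal O(-j))\simeq\H^1(Z_k,\mathcal O(-2j))$. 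Since $\pi$ is affine and $\pi_*\mathcal O_{Z_k}(-2j)=\bigoplus_{i\ge 0}\mathcal O_{\mathbb P^1}(ik-2j)$, this is $\bigoplus_{i\ge 0}\H^1(\mathbb P^1,\mathcal O(ik-2j))$, so $p$ acquires a canonical form $p=\sum_{i\ge 0}\sum_{l=ik-j+1}^{j-1}p_{i,l}\,z^l u^i$ in which the $u^i$-block corresponds to the $i$-th summand (the factor $z^{-j}$ absorbing the line-bundle twist) and the $p_{i,l}$ are genuine coordinates on the space of extension classes. A short computation on $\ell\simeq\mathbb P^1$ then shows that $E$ has splitting type \emph{exactly} $j$, rather than something strictly smaller, precisely when the class of $p|_\ell$ in $\H^1(\mathbb P^1,\mathcal O(-2j))$ vanishes, i.e.\ when $u\mid p$; this deletes the $i=0$ block.

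Next I would isolate the generic, open stratum. The diagonal automorphisms of $\mathcal O(-j)\oplus\mathcal O(j)$ scale $p$ by $\mathbb C^*$, and for a generic bundle of splitting type $j$ the maximal-degree sub-line bundle $\mathcal O(-j)\subset E$ is unique up to scalar, so no further identifications occur there. The width and height of Rem.~\ref{inv} stratify $\mathfrak M_j(Z_k)$ (Not.~\ref{modulic}), and the claim is that the open stratum of minimal width/height is the locus on which the linear-in-$u$ part $q(z):=\sum_{l=k-j+1}^{j-1}p_{1,l}z^l$ alone determines $E$, i.e.\ $p=u\,q(z)$ up to residual gauge: a nonzero $u^i$-block with $i\ge 2$ can be shown to strictly raise one of these invariants, so such a bundle lies in a deeper, lower-dimensional stratum. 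This stratum is therefore parametrized by $[p_{1,k-j+1}:\dots:p_{1,j-1}]\in\mathbb P^{2j-k-2}$, the projectivization of the $(2j-k-1)$-dimensional space $\H^1(\mathbb P^1,\mathcal O(k-2j))$.

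It then remains to impose the stability of Not.~\ref{modulic}, namely triviality of $E$ on $Z_k\setminus\ell$. Clearing the transition matrix over $\mathcal O(U\setminus\ell)$ and $\mathcal O(V\setminus\ell)$ turns this into a resultant-type non-vanishing condition on $q$, saying that $q$ does not vanish to too high an order at an end of $U\cap V$; this confines the failure locus $S$ to a union of linear subspaces of $\mathbb P^{2j-k-2}$, each cut out by the vanishing of $k+1$ consecutive coordinates $p_{1,l}$, so $S$ has codimension at least $k+1$. Finally, the strata other than the minimal one are proper closed subvarieties of strictly smaller dimension — a dimension count on the $u^i$-blocks with $i\ge 2$ via the $\H^1(\mathbb P^1,\mathcal O(ik-2j))$-description — so $\mathbb P^{2j-k-2}\setminus S$ is open and dense in $\mathfrak M_j(Z_k)$, which is the assertion. (For $k=1$ the complement $Z_1\setminus\ell\simeq\mathbb C^2\setminus\{0\}$ supports only trivial bundles, so $S=\emptyset$ there.)

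I expect two steps to carry the real weight. The first is the normal-form statement together with the claim that on the generic stratum every higher $u$-order block can be gauged away: this is the heart of the argument and is exactly where the hypothesis that the splitting type is precisely $j$ enters, through the interaction between the coboundary space $z^j\mathcal O(U)+z^{-j}\mathcal O(V)$ and the grading by powers of $u$. The second is translating ``$E$ trivial on $Z_k\setminus\ell$'' into the explicit vanishing conditions on the $p_{1,l}$ that pin the codimension of $S$ down to at least $k+1$. Everything else is bookkeeping with the $\pi_*$-decomposition and sheaf cohomology on $\mathbb P^1$.
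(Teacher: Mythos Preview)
The paper does not give its own proof of this statement: Thm.~\ref{exist} is quoted verbatim from \cite[Thm.~4.11]{BGK2} and used as background. There is therefore nothing in the present paper to compare your proposal against; what follows is an assessment of your outline on its own terms.

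Your overall architecture is sound and matches what the paper uses implicitly: filtrability gives an upper-triangular transition matrix, the $\Ext^1$-class has the canonical form of Def.~\ref{canonical}, the condition ``splitting type exactly $j$'' forces $u\mid p$ (this is exactly how the paper phrases it just before (\ref{equivalencefirstordermoduli})), and on $\ell^{(1)}$ the $i=1$ block is parametrized by $[p_{1,k-j+1}:\dotsb:p_{1,j-1}]\in\mathbb P^{2j-k-2}$ as in (\ref{point}). So the identification of the candidate open set is correct.

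There are, however, two genuine gaps. First, your claim that on the generic stratum ``every higher $u$-order block can be gauged away'' is asserted rather than argued. This step is the technical core of \cite{BGK2}: one must exhibit, for generic $i=1$ data, explicit isomorphisms that absorb the $i\geq 2$ coefficients, and conversely show that the residual isomorphisms act only by the $\mathbb C^*$-scaling. Your remark that the sub-line bundle $\mathcal O(-j)\subset E$ is unique up to scalar is not enough, since $\H^0(Z_k,\mathcal O(-2j))\neq 0$ (Lem.~\ref{j<0}) and so there are nontrivial lower-triangular gauge transformations whose effect on the $i\geq 2$ blocks must be controlled. Second, your derivation of the codimension bound is heuristic: ``a resultant-type non-vanishing condition \dots\ vanishing of $k+1$ consecutive coordinates'' is a plausible shape but not a proof, and the actual argument in \cite{BGK2} goes through the width/height invariants of Rem.~\ref{inv} rather than a direct trivialization on $Z_k\setminus\ell$. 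In particular, your parenthetical that $S=\emptyset$ for $k=1$ is not quite right as stated: every bundle on $Z_1$ is automatically trivial off $\ell$, but the open dense stratum of Thm.~\ref{exist} still excludes the lower-dimensional strata where the invariants jump (cf.\ Table~\ref{table}), so the ``minus a closed subvariety'' clause is not vacuous there either --- it is the higher-invariant locus, not an instability locus, that is being removed.
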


\section{Geometry of noncommutative deformations}
\label{vectorbundles}

We now study vector bundles over noncommutative deformations $\mathcal Z_k (\sigma) = (Z_k, \mathcal A^\sigma)$ of $Z_k$ (see Def.~\ref{dq}).

\begin{definition}\label{vb}
We call a locally free sheaf of $\mathcal A^\sigma$-modules of rank $r$ over $\mathcal Z_k (\sigma)$ a {\it vector bundle of rank $r$ over $\mathcal Z_k (\sigma)$}. A vector bundle of rank $1$ over $\mathcal Z_k (\sigma)$ is called a {\it line bundle}.
\end{definition}

We recall some properties of deformation quantizations and refer to Kashiwara--Schapira \cite{kashiwaraschapira} for more details.

\begin{proposition} \cite{kashiwaraschapira}
Let $(X, \sigma)$ be a holomorphic Poisson manifold and let $\mathcal A = \mathcal A^\sigma$ be a deformation quantization of $\mathcal O_X$.
\begin{enumerate}
\item Let $\mathcal E$ be a coherent sheaf of $\mathcal A$-modules without $\hbar$-torsion. If $\mathcal E / \hbar \mathcal E$ is locally free of rank~$r$ as a sheaf of $\mathcal O_X$-modules, then $\mathcal E$ is locally free of rank~$r$ as a sheaf of $\mathcal A$-modules.
\item If $U \subset X$ is affine, {\it i.e.}\ $\H^k (U, \mathcal F\vert_U) = 0$ for any $k > 0$ and for any coherent sheaf $\mathcal F$ of $\mathcal O_X$-modules, then $\H^k (U, \mathcal G) = 0$ for any coherent sheaf $\mathcal G$ of $\mathcal A$-modules.
\end{enumerate}
\end{proposition}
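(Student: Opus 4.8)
The plan is to establish both assertions by reducing them, via the $\hbar$-adic filtration, to statements about coherent $\mathcal O_X$-modules. The structural facts about $\mathcal A = \mathcal A^\sigma$ that I will use are: $\mathcal A$ is $\hbar$-torsion free with $\mathcal A/\hbar\mathcal A \simeq \mathcal O_X$; $\mathcal A$ is a coherent sheaf of rings (because $\mathcal O_X$ is), so kernels, images and cokernels of morphisms of coherent $\mathcal A$-modules are again coherent; and every coherent $\mathcal A$-module is $\hbar$-adically complete and separated. Combining the last two gives the \emph{$\hbar$-adic Nakayama lemma} which I will invoke twice: a coherent $\mathcal A$-module $\mathcal C$ with $\mathcal C = \hbar\mathcal C$ is zero, since then $\mathcal C = \hbar^n\mathcal C$ for all $n$ while $\bigcap_n \hbar^n\mathcal C = 0$ by separatedness.

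For part (\textit{i}) the statement is local on $X$, so I would fix a point and, using that $\mathcal E/\hbar\mathcal E$ is locally free of rank $r$, choose sections $e_1,\dots,e_r$ of $\mathcal E$ on a neighbourhood whose classes form an $\mathcal O_X$-basis of $\mathcal E/\hbar\mathcal E$ there; these define $\varphi\colon\mathcal A^{\oplus r}\to\mathcal E$. Since the $e_i$ generate modulo $\hbar$, $\coker\varphi = \hbar\coker\varphi$, so $\varphi$ is surjective by $\hbar$-adic Nakayama. Putting $\mathcal K = \ker\varphi$ (coherent), I would reduce $0\to\mathcal K\to\mathcal A^{\oplus r}\to\mathcal E\to 0$ modulo $\hbar$: because $\mathcal E$ has no $\hbar$-torsion, the free resolution $0\to\mathcal A\xrightarrow{\hbar}\mathcal A\to\mathcal O_X\to 0$ gives $\mathrm{Tor}_1^{\mathcal A}(\mathcal O_X,\mathcal E)=0$, so
\[
0 \to \mathcal K/\hbar\mathcal K \to \mathcal O_X^{\oplus r} \to \mathcal E/\hbar\mathcal E \to 0
\]
is exact. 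The right-hand map is a surjection between locally free $\mathcal O_X$-modules of rank $r$, hence an isomorphism, so $\mathcal K/\hbar\mathcal K = 0$ and then $\mathcal K = 0$ by $\hbar$-adic Nakayama; thus $\varphi$ is an isomorphism and $\mathcal E$ is locally free of rank $r$ over $\mathcal A$.

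For part (\textit{ii}) I would filter $\mathcal G$ by the submodules $\hbar^n\mathcal G$. Multiplication by $\hbar^n$ gives a surjection $\mathcal G/\hbar\mathcal G\to\hbar^n\mathcal G/\hbar^{n+1}\mathcal G$, so each graded piece is a coherent $\mathcal O_X$-module and hence acyclic on the affine $U$. Feeding the short exact sequences $0\to\hbar^n\mathcal G/\hbar^{n+1}\mathcal G\to\mathcal G/\hbar^{n+1}\mathcal G\to\mathcal G/\hbar^n\mathcal G\to 0$ into the long exact cohomology sequence and inducting on $n$ yields $\H^k(U,\mathcal G/\hbar^n\mathcal G)=0$ for all $k>0$ and $n\ge 1$, and shows that the transition maps $\H^0(U,\mathcal G/\hbar^{n+1}\mathcal G)\to\H^0(U,\mathcal G/\hbar^n\mathcal G)$ are surjective, so this inverse system is Mittag--Leffler. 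Since $\mathcal G\simeq\varprojlim_n\mathcal G/\hbar^n\mathcal G$ by completeness, the Milnor exact sequence
\[
0 \to {\varprojlim}^1 \H^{k-1}(U,\mathcal G/\hbar^n\mathcal G) \to \H^k(U,\mathcal G) \to \varprojlim_n \H^k(U,\mathcal G/\hbar^n\mathcal G) \to 0
\]
has vanishing outer terms for every $k>0$, whence $\H^k(U,\mathcal G)=0$.

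The hard part is not the homological algebra above but the package of structural properties of $\mathcal A$ on which it rests --- coherence of $\mathcal A$ as a sheaf of rings together with the $\hbar$-adic completeness and separatedness of coherent $\mathcal A$-modules --- which is where the real content of the theory of DQ-algebras lies; once these are available the rest is the expected filtration-and-Nakayama routine. For the specific surfaces $Z_k$ one could alternatively verify coherence and completeness by hand from the explicit star products on the charts $U$, $V$, $U\cap V$ of \S\ref{quantizableimmersions}, but the cleanest route is to quote the general statement \cite{kashiwaraschapira}.
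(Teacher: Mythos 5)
Your proposal is correct in outline, but it takes a genuinely different route from the paper's: the paper gives no argument for this Proposition at all, importing it wholesale by citation to Kashiwara--Schapira, whereas you reconstruct the standard proof from the structural package of DQ-algebra theory ($\mathcal A$ coherent as a sheaf of rings, $\hbar$-torsion free with $\mathcal A/\hbar\mathcal A\simeq\mathcal O_X$, coherent $\mathcal A$-modules $\hbar$-adically complete and separated), and you rightly flag that this package is exactly where the content of the cited theory lies. Granting it, your Nakayama argument for (\textit{i}\hair) --- lift a local basis, get surjectivity of $\varphi\colon\mathcal A^{\oplus r}\to\mathcal E$ from $\coker\varphi=\hbar\coker\varphi$, kill $\mathcal K=\ker\varphi$ using $\mathrm{Tor}_1^{\mathcal A}(\mathcal O_X,\mathcal E)=0$ from $\hbar$-torsion freeness and the fact that a surjection of locally free rank-$r$ $\mathcal O_X$-modules is an isomorphism --- is sound. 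Two refinements would tighten (\textit{ii}\hair): the $\mathcal O_X$-coherence of the graded pieces $\hbar^n\mathcal G/\hbar^{n+1}\mathcal G$ is better seen directly (each is a coherent $\mathcal A$-module killed by $\hbar$, hence locally the cokernel of a map $\mathcal O_X^{\oplus m}\to\mathcal O_X^{\oplus n}$ obtained by reducing a finite $\mathcal A$-presentation mod $\hbar$) rather than as a quotient of $\mathcal G/\hbar\mathcal G$, since a quotient of a coherent sheaf by an arbitrary subsheaf need not be coherent; and the Milnor sequence for $\H^k(U,\varprojlim_n\mathcal G/\hbar^n\mathcal G)$ is not automatic for an inverse limit of sheaves --- one needs either the derived-limit statement $\mathcal G\simeq \mathrm{R}\varprojlim_n\mathcal G/\hbar^n\mathcal G$ (cohomological completeness, again a Kashiwara--Schapira input) or Mittag--Leffler on sections over a basis of opens, e.g.\ by computing \v{C}ech cohomology with respect to an affine cover. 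With those two points made explicit, your argument is a faithful self-contained reconstruction of what the paper chose to quote: the citation buys the coherence/completeness theory of DQ-algebras, while your route makes transparent why (\textit{i}\hair) and (\textit{ii}\hair) follow from it by the filtration-and-Nakayama routine.
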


Here (\textit{ii}\hair) says that a Leray cover for $(X, \mathcal O_X)$ is also a Leray cover for $(X, \mathcal A)$. In particular, we may use the canonical coordinate charts on $Z_k$ to calculate cohomology or extension groups of coherent (or locally free) sheaves of $\mathcal A$-modules. After showing that rank $2$ bundles are extensions of line bundles (Thm.~\ref{ncfiltrable}), we will calculate these extension groups and use them to obtain moduli of vector bundles over noncommutative deformations in \S \ref{ncmoduli}.

In terms of canonical coordinate charts for $Z_k$ Def.~\ref{vb} implies that a rank $r$ vector bundle over a noncommutative $\mathcal Z_k (\sigma)$ is given by two free rank $r$ modules over $U$ and over $V$ with a global structure defined by a {\it transition matrix}, {\it i.e.}\ a $\star$-invertible $r \times r$ matrix with entries in $\mathcal A^\sigma (U \cap V)$, determining the vector bundle uniquely up to isomorphism, where an isomorphism of vector bundles on $\mathcal Z_k (\sigma)$ is an isomorphism of $\mathcal A^\sigma$-modules, which can be phrased using the coordinate charts of $Z_k$ as follows.
 
\begin{definition}
\label{nciso}
Let $E$ and $E'$ be vector bundles over $\mathcal Z_k (\sigma)$ defined by transition matrices $T$ and $T'$ respectively. An {\it isomorphism} between $E$ and $E'$ is given by a pair of matrices $A_U$ and $A_V$ with entries in $\mathcal A^\sigma (U)$ and $\mathcal A^\sigma (V)$, respectively, which are invertible with respect to $\star$ and such that
\[
T'= A_V \star T \star A_U.
\]
\end{definition}

\begin{definition}
Let $X$ be a complex manifold (resp.\ smooth algebraic variety) and let $\mathcal A$ be a noncommutative associative deformation of $\mathcal O_X$ over $\mathbb C \llrr{\hbar}$. The augmentation $\mathcal A \to \mathbb C \otimes_{\mathbb C \llrr{\hbar}} \mathcal A \simeq \mathcal O_X$ is given by $\mathcal A \to \mathcal A / \hbar \mathcal A$. Augmentation induces a map on quasi-coherent sheaves of $\mathcal A$-modules and the image of a sheaf $\mathcal F$ of $\mathcal A$-modules is called the {\it classical limit} of $\mathcal F$. The image of any cocycle or cohomology class $\alpha \in \H^i (X, \mathcal F (U))$, for $U \subset X$ open, is called the {\it classical limit} of $\alpha$.
\end{definition}

\subsection{Line bundles}

\begin{lemma}
\label{acyclic}
Let $\mathcal A$ be a deformation quantization of $\mathcal O$. Then an $\mathcal A$-module $\mathcal S$ is acyclic if and only $S = \mathcal S / \hbar \mathcal S$ is acyclic.
\end{lemma}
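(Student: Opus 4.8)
The statement is a Nakayama-type lemma for $\hbar$-adic sheaves, so the natural strategy is to relate the cohomology of $\mathcal S$ to that of its classical limit $S = \mathcal S/\hbar\mathcal S$ through the $\hbar$-adic filtration, and then conclude by a limit/completeness argument. First I would record the obvious direction: if $\mathcal S$ is acyclic, then applying $\H^i(X,-)$ to the short exact sequence
\[
0 \longrightarrow \mathcal S \stackrel{\hbar}{\longrightarrow} \mathcal S \longrightarrow S \longrightarrow 0
\]
(which is exact because $\mathcal S$, being a sheaf of $\mathcal A$-modules coming from a deformation quantization, is $\hbar$-torsion free) immediately gives $\H^i(X,S)=0$ for $i>0$ from the long exact cohomology sequence. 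This half needs essentially nothing beyond $\hbar$-torsion freeness.

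For the converse, assume $S$ is acyclic. The key is that $\mathcal S$ is $\hbar$-adically complete, i.e.\ $\mathcal S \simeq \varprojlim_n \mathcal S/\hbar^n\mathcal S$. I would argue by induction on $n$ that each $\mathcal S/\hbar^n\mathcal S$ is acyclic: the case $n=1$ is the hypothesis, and the short exact sequence
\[
0 \longrightarrow \hbar^{n}\mathcal S/\hbar^{n+1}\mathcal S \longrightarrow \mathcal S/\hbar^{n+1}\mathcal S \longrightarrow \mathcal S/\hbar^{n}\mathcal S \longrightarrow 0
\]
together with the identification $\hbar^n\mathcal S/\hbar^{n+1}\mathcal S \simeq S$ (multiplication by $\hbar^n$ is an isomorphism $\mathcal S/\hbar\mathcal S \xrightarrow{\sim} \hbar^n\mathcal S/\hbar^{n+1}\mathcal S$, again using $\hbar$-torsion freeness) feeds the inductive step through the long exact sequence. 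Finally, passing to the inverse limit, I would invoke that on the spaces at hand cohomology commutes with the relevant countable inverse limit of the systems $\{\mathcal S/\hbar^n\mathcal S\}_n$ — the transition maps are surjective, so the $\varprojlim^1$ term vanishes (Mittag--Leffler), giving $\H^i(X,\mathcal S) = \H^i(X, \varprojlim_n \mathcal S/\hbar^n\mathcal S) \simeq \varprojlim_n \H^i(X,\mathcal S/\hbar^n\mathcal S) = 0$ for $i>0$.

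The main obstacle is the last step: justifying that $\H^i(X,-)$ commutes with the inverse limit $\varprojlim_n \mathcal S/\hbar^n\mathcal S$. On an arbitrary space this is false, but here one can reduce to the canonical Leray cover $\{U,V\}$ of $Z_k$, which by part (\textit{ii}\hair) of the cited proposition from Kashiwara--Schapira is acyclic for coherent sheaves of $\mathcal A$-modules as well, so all cohomology is computed by the (finite) \v{C}ech complex of this two-element cover. Since the \v{C}ech complex is a bounded complex of modules, $\varprojlim_n$ applied to these complexes commutes with taking cohomology (again because the systems satisfy Mittag--Leffler: the $\hbar$-adic quotient maps are surjective on sections over $U$, $V$, and $U\cap V$), which closes the argument. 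One should also note at the outset that $\mathcal S$ is coherent over $\mathcal A$ — or at least $\hbar$-torsion free with coherent classical limit — so that the whole $\hbar$-adic machinery applies; this is exactly the hypothesis under which such a sheaf $\mathcal S$ arises in our setting.
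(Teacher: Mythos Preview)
Your proof is correct, but the paper takes a considerably shorter route. Both arguments begin with the short exact sequence $0 \to \mathcal S \xrightarrow{\hbar} \mathcal S \to S \to 0$, and both handle the direction ``$\mathcal S$ acyclic $\Rightarrow$ $S$ acyclic'' the same way. For the converse, however, the paper simply reads off from the long exact sequence that multiplication by $\hbar$ is \emph{surjective} on $\H^j(X,\mathcal S)$ for $j>0$, and then declares that this ``immediately implies'' $\H^j(X,\mathcal S)=0$ --- a Nakayama step that tacitly uses $\hbar$-adic separatedness of the cohomology (justified, as you observe, because everything is computed by the finite \v{C}ech complex on $\{U,V\}$). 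You instead climb the $\hbar$-adic tower by induction, showing each $\mathcal S/\hbar^n\mathcal S$ is acyclic, and then invoke Mittag--Leffler to pass to the inverse limit. Your route is longer but has the virtue of making the completeness and $\hbar$-torsion-freeness hypotheses fully explicit; the paper's one-line argument is slicker but leaves the reader to supply exactly the separatedness ingredient you spelled out.
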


\begin{proof}
Consider the short exact sequence 
\[
0 \longrightarrow \mathcal S \stackrel{\hbar}\longrightarrow \mathcal S \longrightarrow S \longrightarrow 0.
\]
It gives, for $j > 0$ surjections 
\[
\H^j (X, \mathcal S) \stackrel{\hbar}\longrightarrow \H^j (X, \mathcal S) \longrightarrow 0.
\]
This immediately implies that $\H^j (X, \mathcal S) = 0$ for $j > 0$. The converse is immediate.
\end{proof}

\begin{definition}
Let $\mathcal Z_k (\sigma)$ be a noncommutative deformation of $Z_k$. Denote by $\mathcal A (j)$ the line bundle over $\mathcal Z_k (\sigma)$ with transition function $z^{-j}$.
\end{definition}

\begin{proposition}
\label{nclinebundles}
Any line bundle on $\mathcal Z_k (\sigma)$ is isomorphic to $\mathcal A (j)$ for some $j \in \mathbb Z$, {\it i.e.}\ $\Pic (\mathcal Z_k (\sigma)) \simeq \mathbb Z$.
\end{proposition}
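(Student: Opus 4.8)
The plan is to mimic the classical proof that $\Pic(Z_k)\simeq\mathbb Z$ via the exponential sequence, but carried out over the noncommutative structure sheaf $\mathcal A=\mathcal A^\sigma$. First I would observe that a line bundle on $\mathcal Z_k(\sigma)$ is, by the discussion preceding Def.~\ref{nciso}, given by a single $\star$-invertible element $T\in\mathcal A(U\cap V)^\times$, with two such data $T,T'$ defining isomorphic line bundles precisely when $T'=A_V\star T\star A_U$ for $\star$-invertible $A_U\in\mathcal A(U)^\times$, $A_V\in\mathcal A(V)^\times$. So $\Pic(\mathcal Z_k(\sigma))$ is the quotient of $\mathcal A(U\cap V)^\times$ by the (two-sided) action of $\mathcal A(U)^\times\times\mathcal A(V)^\times$; equivalently $\H^1$ of the nonabelian ``sheaf'' $\mathcal A^\times$ in the Čech sense for the cover $\{U,V\}$, which is a Leray cover by part~(\textit{ii}\hair) of the preceding Proposition (applied to $\mathcal A$ rather than $\mathcal O$). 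The target is to show every class is represented by $z^{-j}$, $j\in\mathbb Z$, and that distinct $j$ give non-isomorphic bundles.

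The key step is an $\hbar$-adic successive-approximation argument. Writing $T=T_0+\hbar T_1+\hbar^2T_2+\cdots$ with $T_0\in\mathcal O(U\cap V)^*$, the classical limit $T_0$ is a transition function of an honest line bundle on $Z_k$, hence by the exponential-sequence argument already recalled in the excerpt (in \S\ref{poissongeometry}) we have $T_0=a_V\,z^{-j}\,a_U$ for some $j\in\mathbb Z$ and $a_U\in\mathcal O(U)^*$, $a_V\in\mathcal O(V)^*$. Lifting $a_U,a_V$ to $\star$-invertible elements of $\mathcal A(U)^\times$, $\mathcal A(V)^\times$ (units lift along the $\hbar$-adic filtration since $\mathcal A(U)$, $\mathcal A(V)$ are $\hbar$-adically complete) and applying the isomorphism of Def.~\ref{nciso}, we reduce to $T=z^{-j}+\hbar T_1+\cdots$. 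Now I would kill the higher terms order by order: suppose $T=z^{-j}\star(1+\hbar^n R_n+\cdots)$ with $R_n\in\mathcal O(U\cap V)$ (the factor $z^{-j}$ pulled out on one side). The obstruction to absorbing $\hbar^nR_n$ into a gauge transformation of the form $A_V=1+\hbar^n b_V+\cdots$, $A_U=1+\hbar^n b_U+\cdots$ is, to leading order, the Čech class of $R_n$ in $\H^1(Z_k,\mathcal O_{Z_k})$ — and this group \emph{vanishes} (it is among the hypotheses of Thm.~\ref{twisteddeformationquantization}, noted to hold for $Z_k$). So at each order we can solve $R_n=b_V-b_U$ in $\mathcal O(U\cap V)$ and push the discrepancy to order $n+1$; the process converges $\hbar$-adically, giving $T\simeq z^{-j}$. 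Finally, distinct $j$ give non-isomorphic line bundles on $\mathcal Z_k(\sigma)$ because an isomorphism $\mathcal A(j)\simeq\mathcal A(j')$ would descend under the augmentation $\mathcal A\to\mathcal O_{Z_k}$ to an isomorphism $\mathcal O_{Z_k}(j)\simeq\mathcal O_{Z_k}(j')$, forcing $j=j'$ (first Chern class).

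The main obstacle is the bookkeeping in the inductive step: one must be careful that pulling $z^{-j}$ out of the $\star$-product on one side and conjugating by $z^{-j}$ does not disturb $\hbar$-adic convergence (conjugation by $z^{-j}$ is a continuous automorphism of $\mathcal A(U\cap V)$, so this is fine), and that the leading-order obstruction genuinely lands in $\H^1(Z_k,\mathcal O_{Z_k})$ rather than in some twisted coefficient sheaf — here it is crucial that after the reduction the relevant cocycle is \emph{additive} (a coboundary condition $R_n=b_V-b_U$, not multiplicative), which is exactly why $\H^1(\mathcal O_{Z_k})=0$ (and not, say, $\H^1$ of a line bundle) is what is needed. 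One should also check that the $b_U$ one solves for at stage $n$ extends to a global section over $U$ and similarly $b_V$ over $V$, i.e.\ that the decomposition $R_n=b_V-b_U$ can be taken with $b_U\in\mathcal O(U)$, $b_V\in\mathcal O(V)$; this is exactly the statement that the Čech differential for the two-element cover $\{U,V\}$ is surjective onto its kernel's complement, i.e.\ $\H^1=0$ computed by this Leray cover. With these points handled the argument closes.
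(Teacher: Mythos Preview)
Your proposal is correct and follows essentially the same approach as the paper: reduce the transition function to $z^{-j}+O(\hbar)$ via the classical Picard computation, then kill the higher $\hbar$-terms by an order-by-order induction whose obstruction at each step is a \v Cech $1$-cocycle for $\mathcal O_{Z_k}$, which is a coboundary since $\H^1(Z_k,\mathcal O)=0$. The paper writes out the recursive equations $S_n+z^{-j}a_n+z^{-j}\alpha_n=0$ explicitly rather than factoring out $z^{-j}$ as you do, and omits the injectivity argument (distinct $j$ non-isomorphic via the classical limit) that you include, but the substance is identical.
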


\begin{proof}
Let $f = f_0 + \sum_{n=1}^\infty \widetilde f_n \,\hbar^n \in \mathcal A^* (U \cap V)$ be the transition function for $\mathcal L$. Then there exist functions $a_0 \in \mathcal O^* (U)$ and $\alpha_0 \in \mathcal O^* (V)$ such that $\alpha_0 f_0 a_0 = z^{-j}$ and viewing $a_0$ resp.\ $\alpha_0$ as elements in $\mathcal A^* (U)$ resp.\ $\mathcal A^* (V)$ one has $\alpha_0 \star f \star a_0 = z^{-j} + \sum_{n=1}^\infty f_n \hbar^n$ for some $f_n \in \mathcal O (U \cap V)$. We may thus assume that the transition function of $\mathcal L$ is 
$z^{-j} + \sum_{n=1}^\infty f_n \hbar^n.$

To give an isomorphism $\mathcal L \simeq \mathcal A (j)$ it suffices to define functions $a_n \in \mathcal O (U)$ and $\alpha_n \in \mathcal O (V)$ satisfying
\begin{align}
\label{linebundleisomorphism}
\big( 1 + \textstyle\sum_{n=1}^\infty \alpha_n \hbar^n \big) \star \big( z^{-j} + \sum_{n=1}^\infty f_n \hbar^n \big) \star \big( 1 + \sum_{n=1}^\infty a_n \hbar^n \big) = z^{-j}.
\end{align}

Collecting terms by powers of $\hbar$, (\ref{linebundleisomorphism}) is equivalent to the system of equations
\begin{align}
S_n + z^{-j} a_n + z^{-j} \alpha_n = 0 \tag*{$n = 1, 2, \dotsc$}
\end{align}
where $S_n$ is a finite sum involving $f_i$, $B_i$ for $i \leq n$, but only $a_i, \alpha_i$ for $i < n$. The first terms are
\begin{align*}
S_1 &= f_1 \\
S_2 &= f_2 + \alpha_1 f_1 + a_1 f_1 + B_1 \big( \alpha_1, z^{-j} \big) + B_1 \big( z^{-j}, a_1 \big) + \alpha_1 z^{-j} a_1 \\
S_3 &= f_3
+ B_2 \big( \alpha_1, z^{-j} \big)
+ B_2 \big( z^{-j}, a_1 \big)
+ B_1 \big( \alpha_2, z^{-j} \big) \\ &\qquad
+ B_1 \big( z^{-j}, a_2 \big)
+ B_1 \big( \alpha_1, f_1 \big)
+ B_1 \big( \alpha_1, z^{-j} a_1 \big)
+ B_1 \big( z^{-j}, a_1 \big) \\ &\qquad
+ \alpha_2 f_1 + \alpha_2 z^{-j} a_1 + \alpha_1 f_2 + \alpha_1 f_1 a_1 + \alpha_1 z^{-j} a_2 + f_2 a_1 + f_1 a_2
\end{align*}
Since $\H^1 (Z_k, \mathcal O) = 0$ we can solve these equations recursively, for example by defining $a_n$ to cancel all terms of $z^j S_n$ having positive powers of $z$ and setting $\alpha_n = z^j S_n - a_n$.
\end{proof}

\subsection{Vector bundles}
\label{nchigherrank}

Generalizing Thm.~\ref{filtrable} to the noncommutative setting, we prove filtrability and formal algebraicity for bundles over deformation quantizations of $Z_k$. Let $\mathcal Z_k (\sigma)$ be a noncommutative deformation of $Z_k$.

\begin{theorem}
\label{ncfiltrable}
 Vector bundles over $\mathcal Z_k (\sigma)$ are filtrable.
\end{theorem}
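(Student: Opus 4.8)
The plan is to reduce the noncommutative filtrability statement to the classical one (Thm.~\ref{filtrable}) by an $\hbar$-adic induction. Let $E$ be a rank $r$ vector bundle over $\mathcal Z_k(\sigma)$, given by a $\star$-invertible transition matrix $T \in \GL_r(\mathcal A^\sigma(U\cap V))$. Its classical limit $E_0 = E/\hbar E$ is a rank $r$ holomorphic vector bundle on $Z_k$, hence filtrable by Thm.~\ref{filtrable}: there is a line subbundle $\mathcal O(j_1) \hookrightarrow E_0$ with locally free quotient. First I would lift this classical sub-line-bundle to a rank $1$ subbundle $\mathcal A(j_1) \hookrightarrow E$ over $\mathcal Z_k(\sigma)$ with locally free quotient; iterating on the quotient (which has lower rank and is again a bundle over $\mathcal Z_k(\sigma)$) then produces the full filtration.

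To lift the classical subbundle, I would work with transition matrices. Using the argument in the proof of Prop.~\ref{nclinebundles} (applied to the quotient line bundle), one may assume the classical transition matrix $T_0$ is put in block-triangular form
\[
T_0 = \begin{pmatrix} z^{-j_1} & p \\ 0 & T_0' \end{pmatrix}
\]
with $T_0'$ the transition matrix of the classical rank $r-1$ quotient and $p$ representing the extension class in $\H^1$. The goal is to find $\star$-invertible change-of-frame matrices $A_U$ over $U$ and $A_V$ over $V$ (each congruent to the identity mod $\hbar$, or more precisely adjusting $T$ by the classical trivializations first) so that $A_V \star T \star A_U$ is block upper-triangular, $\begin{pmatrix} * & * \\ 0 & * \end{pmatrix}$, with the first block a $1\times 1$ entry. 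Writing $A_U = A_U^{(0)} + \sum_{n\geq 1} A_{U,n}\hbar^n$ and similarly for $A_V$ and $T$, and expanding the equation $A_V \star T \star A_U = (\text{block triangular})$ order by order in $\hbar$, the $n$th order term has the shape: the block-triangularity of the lower-left block requires killing a matrix-valued expression that, modulo terms already determined at orders $<n$, equals $z^{-j_1}\cdot(\text{lower-left block of }A_{U,n}) + (\text{lower-left block of }A_{V,n})\cdot T_0' + (\text{known})$. This is precisely a coboundary equation in the Čech complex of the relevant $\Hom$-sheaf for the cover $\{U,V\}$, whose obstruction lies in $\H^1$ of that sheaf. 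The key point is that, having already chosen the classical sub-line-bundle to split off an actual subbundle, the order-$0$ equation is solvable, and the higher cohomology that controls solvability at each order vanishes — exactly the mechanism used in Prop.~\ref{nclinebundles}, where $\H^1(Z_k,\mathcal O)=0$ (and more generally $\H^1(Z_k, \mathcal{H}om(\mathcal O(j_2),\mathcal O(j_1)))$ need not vanish, but one only needs to cancel the positive powers of $z$ and absorb the rest into the $V$-chart, which works because the relevant sheaf $\mathcal{H}om$ is a sum of line bundles $\mathcal O(m)$ and any global section restricted to $U\cap V$ decomposes into a part extending over $U$ and a part extending over $V$).

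Finally, Lem.~\ref{acyclic} and Prop.~(\textit{i}\hair) guarantee that the resulting sub- and quotient-modules are genuine vector bundles over $\mathcal Z_k(\sigma)$: the quotient is $\hbar$-torsion-free and its classical limit is the classical quotient bundle, hence locally free, so the quotient $\mathcal A$-module is locally free of the right rank, and likewise for the sub. Induction on the rank then completes the proof. The main obstacle I anticipate is the cohomological bookkeeping in the inductive step: one must check carefully that the "cancel the positive $z$-powers over $U$, absorb the rest over $V$" splitting used for line bundles still works for the relevant $\Hom$-sheaves $\mathcal{H}om(\mathcal O(j_2),\mathcal O(j_1))\simeq\mathcal O(j_1-j_2)$ on $Z_k$ — i.e.\ that sections over $U\cap V$ split as (extends over $U$) $+$ (extends over $V$) — and that the bidifferential operators $B_n$ appearing in $\star$ do not spoil the relevant degree bounds (cf.\ the bound analysis in the proof of Prop.~\ref{extends}). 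Once this splitting is in place, the recursion is formal.
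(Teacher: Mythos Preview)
Your approach is essentially the paper's first proof: reduce to the classical limit, put $T_0$ in (block) upper-triangular form, then kill the lower-left block order by order in $\hbar$ using change-of-frame matrices. The paper carries this out explicitly for rank $2$ and remarks that higher rank is similar; your induction on rank is the natural way to phrase the general case.

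There is one point where your reasoning wobbles. You write that $\H^1(Z_k,\mathcal{H}om(\mathcal O(j_2),\mathcal O(j_1)))$ ``need not vanish, but one only needs to cancel the positive powers of $z$ and absorb the rest into the $V$-chart.'' That is not quite right: the statement ``every section over $U\cap V$ splits as (extends over $U$) $+$ (extends over $V$)'' is \emph{exactly} the vanishing of $\H^1$, so if $\H^1\neq 0$ the splitting fails and the recursion breaks. The fix is simply to order the pieces: classical filtrability on $Z_k$ lets you choose the sub-line-bundle with $j_1 \geq j_2$ (for rank $2$; in higher rank, peel off the largest $j$ first). Then $\mathcal{H}om(\mathcal O(j_2),\mathcal O(j_1))\simeq \mathcal O(j_1-j_2)$ with $j_1-j_2\geq 0$, and $\H^1(Z_k,\mathcal O(m))=0$ for $m\geq 0$. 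The paper makes precisely this move (``which is possible since $\H^1(Z_k,\mathcal O(j_1-j_2))=0$, as $j_1\geq j_2$''). Once you insert this ordering, your argument goes through and matches the paper.

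The paper also gives a second, more conceptual proof via Lem.~\ref{acyclic}: the classical filtrability argument of \cite{ballicogasparimkoppe1} rests on the vanishing of certain $\H^i(Z_k, E\otimes \Sym^n N^*)$, and Lem.~\ref{acyclic} transports this acyclicity to the $\mathcal A$-module setting directly. You might find that route cleaner than the explicit matrix recursion.
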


\begin{proof}[First proof.]
Let $\mathcal E$ be the rank $2$ bundle given by transition matrix $T = T_0 + \sum_{n=1}^\infty T_n \hbar^n$. Let $(A_U, A_V)$ be an isomorphism of the classical limit $\mathcal E / \hbar \mathcal E$ (with transition matrix $T_0$) with a filtered bundle, {\it i.e.}\
\[
A_V T_0 A_U = 
\begin{pmatrix}
z^{j_1} & b_0 \\
 0 & z^{j_2}
\end{pmatrix}
\]
with $j_1 \geq j_2$.
Then $(A_U, A_V)$ gives an isomorphism with the bundle given by
\begin{align}\label{transitionrank2}
A_V \star T \star A_U =
\begin{pmatrix}
z^{j_1} + O (\hbar) & b_0 + O (\hbar) \\
O (\hbar) & z^{j_2} + O (\hbar)
\end{pmatrix} = T'\text{.}
\end{align}
Now choose $\alpha_n, \delta_n$ and $a_n, d_n$ as in the proof of Prop.~\ref{nclinebundles} such that the $(1,1)$ and $(2,2)$ entries of (\ref{transitionrank2}) are taken to the transition functions of $\mathcal A (-j_1)$ and $\mathcal A (-j_2)$, respectively. Then for
\begin{align*}
A'_V &=
\begin{pmatrix}
1 + \sum_{n=1}^\infty \alpha_n \hbar^n & 0 \\
0 & 1 + \sum_{n=1}^\infty \delta_n \hbar^n
\end{pmatrix}\\
A'_U &=
\begin{pmatrix}
1 + \sum_{n=1}^\infty a_n \hbar^n & 0 \\
0 & 1 + \sum_{n=1}^\infty d_n \hbar^n
\end{pmatrix}
\end{align*}
we have that
\begin{equation*}
A'_V \star T' \star A'_U =
\begin{pmatrix}
z^{j_1} & b_0 + O (\hbar) \\
O \big( \hbar^N \big) & z^{j_2}
\end{pmatrix}
=
\begin{pmatrix}
z^{j_1} & b_0 + O (\hbar) \\
\sum_{n=N}^\infty c_n \hbar^n & z^{j_2}
\end{pmatrix}
=
T''
\end{equation*}
for some integer $N \geq 1$.
Now choose $c'_N$ and $\gamma'_N$ such that
\[
z^{j_1} \gamma'_N + c_N + z^{j_2} c'_N = 0
\]
which is possible since $\H^1 (Z_k, \mathcal O (j_1 {-} j_2)) = 0$, as $j_1 \geq j_2$. Then
\[
\begin{pmatrix}
1 & 0 \\
\gamma'_N \hbar^N & 1
\end{pmatrix}
\star
T''
\star
\begin{pmatrix}
1 & 0 \\
c'_N \hbar^N & 1
\end{pmatrix}
=
\begin{pmatrix}
z^{j_1} + O (\hbar) & b_0 + O (\hbar) \\
O \big( \hbar^{N+1} \big) & z^{j_2} + O (\hbar)
\end{pmatrix}
=
T'''.
\]
As for the isomorphism between the bundles defined by $T'$ and $T''$, find matrices taking the $(1,1)$ and $(2,2)$ entries of $T'''$ to $z^{j_1}$ and $z^{j_2}$ as above. We thus get that an isomorphic bundle may be given by a transition matrix of the form
\[
\begin{pmatrix}
z^{j_1} & b_0 + O (\hbar) \\
O \big( \hbar^{N+1} \big) & z^{j_2}
\end{pmatrix}\text{.}
\]

Applying the principle of (strong) induction, we conclude that $\mathcal E$ is isomorphic to the bundle given by a transition matrix
\[
\begin{pmatrix}
z^{j_1} & b_0 + \sum_{n=1}^\infty b_n \hbar^n \\
0 & z^{j_2}
\end{pmatrix}\text{.}
\]
In particular, any rank $2$ bundle is an extension of line bundles. The calculation for rank $r$ is similar.
\end{proof}

\begin{proof}[Second proof.]
This is a generalization of Ballico--Gasparim--K\"oppe \cite[Thm.~3.2]{ballicogasparimkoppe1} to the noncommutative case. Let  
$\mathcal E$ be a sheaf of $\mathcal A$-modules. 
Lem.~\ref{acyclic} gives that the classical limit
$\mathcal E_0 = \mathcal E / \hbar \mathcal E$ is acyclic as a sheaf of $\mathcal A$-modules (and equivalently as a sheaf of $\mathcal O$-modules) if and only if $\mathcal E$ is acyclic as a sheaf of $\mathcal A$-modules. 
 
Filtrability for a bundle $E$ over $Z_k$ is obtained due to the vanishing of cohomology groups $\H^i (Z_k, E \otimes {\Sym^n} N^*)$ for $i=1,2$, where $N^*$ is the conormal bundle of $\ell \subset Z_k$ and $n>0$ are integers, the proof proceeds by induction on $n$. In the noncommutative case, let $\mathcal S$ denote the kernel of the projection $\mathcal A^{(n)} \to \mathcal A^{(n-1)}$. By construction we have that $\mathcal S / \hbar \mathcal S = {\Sym^n} N^*$ and the required vanishing of cohomologies is guaranteed by Lem.~\ref{acyclic}. \end{proof}

As for $Z_k$, rank $2$ vector bundles on $\mathcal Z_k (\sigma)$ are thus extensions of line bundles.
Hence, moduli spaces of rank $2$ bundles may be built out of  quotients of $\Ext^1$'s.

\begin{definition}\label{canonical}
\cite[Thm.\ 3.3]{gasparim} showed that a rank $2$ bundle $E$ on $Z_k$ 
with first Chern class $c_1 (E) = 0$ can be given by a {\it canonical transition matrix}
\begin{align*}
T_0 &=
\begin{pmatrix}
z^j & p \\
 0  & z^{-j}
\end{pmatrix}
\qquad \mbox{with} \qquad
p = \sum_{i=0}^{\left\lfloor\! \frac{2j - 2}{k} \!\right\rfloor} \sum_{l = ki-j+1}^{j-1} p_{il} z^l u^i  \in \Ext^1 (\mathcal O (j), \mathcal O (-j)).
\intertext{Accordingly, for a noncommutative deformation $\mathcal Z_k (\sigma)$ we define the corresponding notion of {\it canonical  transition matrix} as:}
T &=
\begin{pmatrix}
z^j & {\bf p} \\
 0  & z^{-j}
\end{pmatrix}
\qquad \mbox{with} \qquad
{\bf p} = \sum_{n=0}^\infty p_n \hbar^n 
 \in \Ext^1 (\mathcal A (j), \mathcal A (-j)).
\end{align*}
\end{definition}

We will now see that each $p_n$ can be given the canonical form of the classical case. 

\begin{lemma}
\label{subspace}
Let $\mathcal A$ be a deformation quantization of $\mathcal O_{Z_k}$. There is an injective map of $\mathbb C$-vector spaces
\begin{align*}
\begin{tikzpicture}[baseline=-2.6pt,description/.style={fill=white,inner sep=2pt}]
\matrix (m) [matrix of math nodes, row sep=1em, text height=1.5ex, column sep=1.5em, text depth=0.25ex, ampersand replacement=\&, column 2/.style={anchor=base west}]
{
\Ext^1_{\mathcal A} (\mathcal A (j), \mathcal A (-j)) \& \displaystyle\prod_{n=0}^\infty \Ext^1_{\mathcal O} (\mathcal O (j), \mathcal O (-j)) \hbar^n \simeq \Ext^1_{\mathcal O} (\mathcal O (j), \mathcal O (-j)) \llrr{\hbar} \\
{\bf p} = p_0 + \displaystyle\sum_{n=1}^\infty p_n \hbar^n \& (p_0, p_1 \hbar, p_2 \hbar^2, \dotsc) \\
}
;
\path[|-stealth,line width=.5pt,font=\scriptsize]
(m-2-1) edge (m-2-2)
;
\path[-stealth,line width=.5pt,font=\scriptsize]
(m-1-1) edge (m-1-2)
;
\end{tikzpicture}
\end{align*}
where $p_i \in \Ext^1 (\mathcal O (j), \mathcal O (-j))$.
\end{lemma}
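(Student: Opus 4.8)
The plan is to present $\Ext^1_{\mathcal A}(\mathcal A(j),\mathcal A(-j))$ as an explicit cokernel and then carry out an $\hbar$-adic reduction against the classical canonical form. By the transition-matrix bookkeeping already used in the proof of Thm~\ref{ncfiltrable} and in Def~\ref{canonical}, an extension $0\to\mathcal A(-j)\to\mathcal E\to\mathcal A(j)\to 0$ of $\mathcal A$-modules is encoded by the off-diagonal entry $\mathbf p\in\mathcal A(U\cap V)$ of its canonical transition matrix, and two entries $\mathbf p,\mathbf p'$ represent the same class precisely when $\mathbf p'-\mathbf p = z^j\star a + \alpha\star z^{-j}$ for some $a\in\mathcal A(U)$, $\alpha\in\mathcal A(V)$ (acting with the evident unipotent upper-triangular changes of frame that preserve sub-bundle and quotient, as in the proof of Thm~\ref{ncfiltrable}). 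Hence
\[
\Ext^1_{\mathcal A}(\mathcal A(j),\mathcal A(-j)) \;=\; \mathcal A(U\cap V)\,\big/\,\bigl(z^j\star\mathcal A(U) + \mathcal A(V)\star z^{-j}\bigr),
\]
and reducing modulo $\hbar$ recovers the classical cokernel $\Ext^1_{\mathcal O}(\mathcal O(j),\mathcal O(-j)) = \mathcal O(U\cap V)/\bigl(z^j\mathcal O(U)+z^{-j}\mathcal O(V)\bigr)$. Let $\mathcal C\subset\mathcal O(U\cap V)$ denote the span of the canonical monomials of Def~\ref{canonical}, so that $\mathcal O(U\cap V) = \mathcal C\oplus\bigl(z^j\mathcal O(U)+z^{-j}\mathcal O(V)\bigr)$ and $\mathcal C$ maps isomorphically onto $\Ext^1_{\mathcal O}(\mathcal O(j),\mathcal O(-j))$.

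I would next show that every class admits a representative $\mathbf p=\sum_n p_n\hbar^n$ with each $p_n\in\mathcal C$, by the order-by-order argument used for Prop~\ref{nclinebundles} and Thm~\ref{ncfiltrable}: if $p_0,\dots,p_{m-1}$ already lie in $\mathcal C$, split $p_m = p_m^{\mathrm{can}} + z^j a_m + \alpha_m z^{-j}$ with $p_m^{\mathrm{can}}\in\mathcal C$, $a_m\in\mathcal O(U)$, $\alpha_m\in\mathcal O(V)$, and subtract the $\star$-coboundary $z^j\star(a_m\hbar^m)+(\alpha_m\hbar^m)\star z^{-j}$; this fixes $p_0,\dots,p_{m-1}$, replaces $p_m$ by $p_m^{\mathrm{can}}$, and alters only the coefficients in degrees $>m$, so the infinite composition converges $\hbar$-adically and lands in $\mathcal C\llrr{\hbar}$. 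Reading off coefficients defines the map $\mathbf p\mapsto(p_0,p_1\hbar,p_2\hbar^2,\dots)$ into $\prod_{n}\Ext^1_{\mathcal O}(\mathcal O(j),\mathcal O(-j))\hbar^n\simeq\Ext^1_{\mathcal O}(\mathcal O(j),\mathcal O(-j))\llrr{\hbar}$, and choosing the splitting $\mathcal O(U\cap V)\to\mathcal C$ linearly makes the whole construction $\mathbb C\llrr{\hbar}$-linear.

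The remaining point — and the one I expect to be the main obstacle — is uniqueness of the canonical representative, equivalently that $\mathcal C\llrr{\hbar}\cap\bigl(z^j\star\mathcal A(U)+\mathcal A(V)\star z^{-j}\bigr)=0$, equivalently that $\Ext^1_{\mathcal A}(\mathcal A(j),\mathcal A(-j))$ carries no $\hbar$-torsion; this is precisely what makes the map above well-defined (independent of the chosen representative) and hence injective, as a canonical representative of $0$ is then forced to be $0$. I would prove it by induction on the $\hbar$-order: if $\sum_n c_n\hbar^n = z^j\star a+\alpha\star z^{-j}$ with all $c_n\in\mathcal C$, the order-$0$ part gives $c_0 = z^j a_0+\alpha_0 z^{-j}\in z^j\mathcal O(U)+z^{-j}\mathcal O(V)$, so $c_0=0$ and $z^{2j}a_0+\alpha_0=0$. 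The crux is to show that the bidifferential corrections $\sum_{i\geq1}\bigl(B_i(z^j,a_{t-i})+B_i(\alpha_{t-i},z^{-j})\bigr)$ arising at order $t$ always lie in $z^j\mathcal O(U)+z^{-j}\mathcal O(V)$, so that each equation $c_t = z^ja_t+\alpha_tz^{-j}+(\text{corrections})$ forces $c_t\in\mathcal C\cap\bigl(z^j\mathcal O(U)+z^{-j}\mathcal O(V)\bigr)=0$ and yields the next relation $z^{2j}a_t+\alpha_t = -z^j\sum_{i\geq1}(\cdots)$ to drive the induction; one checks this by rewriting the corrections through the lower-order relations — e.g.\ since $B_1$ is the Poisson bracket $\{\,\cdot\,,\,\cdot\,\}_\sigma$ one finds $B_1(z^j,a_0)+B_1(\alpha_0,z^{-j}) = j\,\sigma_U\,z^{-j-1}\,\partial_u\bigl(z^{2j}a_0+\alpha_0\bigr)=0$, and the higher corrections collapse similarly after substituting $\alpha_s = -z^{2j}a_s-z^j(\cdots)$, the accounting of which powers of $z$ and $u$ can occur (governed by the chart filtrations and tangency of $\sigma$ to $D$ from Prop~\ref{extends}) being the technical heart. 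A more conceptual alternative is to read off the $\hbar$-torsion from the long exact $\Ext_{\mathcal A}$-sequence of $0\to\mathcal A(-j)\xrightarrow{\hbar}\mathcal A(-j)\to\mathcal O(-j)\to0$: since the two-chart cover is Leray, $\Ext^2_{\mathcal A}(\mathcal A(j),\mathcal A(-j))=0$, so the torsion is the cokernel of the reduction $\Hom_{\mathcal A}(\mathcal A(j),\mathcal A(-j))\to\Hom_{\mathcal O}(\mathcal O(j),\mathcal O(-j))$, and it then suffices that every classical homomorphism $\mathcal O(j)\to\mathcal O(-j)$ deforms $\star$-linearly — again an order-by-order lift on the two charts. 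Granting the uniqueness, the displayed map is the asserted injective $\mathbb C$-linear map, and in fact embeds $\Ext^1_{\mathcal A}(\mathcal A(j),\mathcal A(-j))$ as a $\mathbb C\llrr{\hbar}$-submodule of $\Ext^1_{\mathcal O}(\mathcal O(j),\mathcal O(-j))\llrr{\hbar}$.
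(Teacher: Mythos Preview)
Your existence argument --- reducing each coefficient $p_n$ to canonical form by an order-by-order choice of $a_m\in\mathcal O(U)$, $\alpha_m\in\mathcal O(V)$ --- is exactly the paper's proof. The paper stops there: its long proof only shows that every class has a representative with each $p_n\in\mathcal C$, and its ``short proof'' records the upshot, namely that $\Ext^1_{\mathcal A}(\mathcal A(j),\mathcal A(-j))$ is a \emph{quotient} of $\Ext^1_{\mathcal O}(\mathcal O(j),\mathcal O(-j))\llrr{\hbar}$. Since quotients of $\mathbb C$-vector spaces split, this already yields \emph{some} injective $\mathbb C$-linear map back, which is all the lemma asserts. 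The paper does not prove uniqueness of the canonical form in this lemma.

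You go further and attack uniqueness, i.e.\ well-definedness of the \emph{specific} assignment $\mathbf p\mapsto(p_0,p_1\hbar,\dots)$. That stronger statement is precisely what the paper postpones to Thm.~\ref{ncextformal}, where it is proved only under the extra hypothesis that $\sigma$ is tangent to $Z_k\setminus U$ --- for $k\geq 2$ by observing that $\sigma_U$ is a multiple of $u$ so the $B_n$ never lower the $u$-degree, and for $k=1$ via Lem.~\ref{z1}. Your route~(a) lands on the same mechanism (you invoke tangency from Prop.~\ref{extends} at the ``technical heart''), and your route~(b) via the long exact sequence is a clean reformulation but still leaves the nontrivial step of lifting every classical hom $\mathcal O(j)\to\mathcal O(-j)$ to a $\star$-hom. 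Both are sound outlines for Thm.~\ref{ncextformal}; for Lem.~\ref{subspace} as stated (arbitrary deformation quantization, mere existence of an injection of $\mathbb C$-vector spaces), the existence-plus-splitting argument suffices and is what the paper does.
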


\begin{proof}[Short proof]
$\Ext^1_{\mathcal A} (\mathcal A (j), \mathcal A (-j)) $ is the quotient of $\Ext^1_{\mathcal O} (\mathcal O (j), \mathcal O (-j)) \llrr{\hbar}$
by the relations $q_n \simeq q_n + \sum p_i p_{n-i}$. 
\end{proof}

\begin{proof}
Two extension classes ${\bf p} = p_0 + \sum_{n=1}^\infty p_n \hbar^n$ and ${\bf p}' = p'_0 + \sum_{n=1}^\infty  p'_n \hbar^n$ are equivalent if there exist two functions
\begin{equation}
\label{bbeta}
b     = \eqmakebox[bbeta0]{$b_0$}     + \sum_{n=1}^\infty \eqmakebox[bbetan]{$b_n$} \hbar^n \in \mathcal A (U) \quad \text{and} \quad
\beta = \eqmakebox[bbeta0]{$\beta_0$} + \sum_{n=1}^\infty \eqmakebox[bbetan]{$\beta_n$} \hbar^n \in \mathcal A (V)
\end{equation}
such that
\begin{flalign}
&&\begin{pmatrix}
1 & \beta \\ 0 & 1
\end{pmatrix}
\star
\begin{pmatrix}
z^j & {\bf p} \\
0 & z^{-j}
\end{pmatrix}
&=
\begin{pmatrix}
z^j & {\bf p}' \\
0 & z^{-j}
\end{pmatrix}
\star
\begin{pmatrix}
1 & b \\ 0 & 1
\end{pmatrix}\text{.} && \notag
\intertext{that is}
&& {\bf p} + \beta \star z^{-j} &= {\bf p}' + z^j \star b \label{extensions} &&
\intertext{or equivalently}
&& p_n + \beta_n z^{-j} + \sum_{i = 1}^n B_i (\beta_{n-i}, z^{-j}) &= p'_n + z^j b_n + \sum_{i = 1}^n B_i (z^j, b_{n-i}) && \mathllap{\text{for }} n \in \mathbb N. \notag
\end{flalign}
For each $n$ we may choose $b_n$ and $\beta_n$ reducing $p_n$ to the canonical form
\begin{equation}
\label{formpn}
p_n = \sum_{i=0}^{\left\lfloor\! \frac{2j-2}k \!\right\rfloor} \sum_{l=ik-j+1}^{j-1} p^n_{il} z^l u^i
\end{equation}
as in (\ref{canonical}).
\end{proof}

\begin{lemma}\label{z1}
Let $\mathcal Z_1 (\sigma)$ be the deformation quantization of $Z_1$ with Poisson structure $\sigma = (1, -\xi)$. Then any ${\bf p} \in \Ext^1_{\mathcal A} (\mathcal A (j), \mathcal A (-j))$ is represented in the canonical form
${\bf p} = \sum_{n=0}^\infty p_n \hbar^n
$
where $p_n$ is of the form
$
p_n = \sum_{i=0}^{2j-2} \sum_{l=i-j+1}^{j-1} p^n_{il} z^l u^i.
$
\end{lemma}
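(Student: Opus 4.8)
The plan is to run, for $k=1$, the order-by-order normalization that underlies Lemma~\ref{subspace}, now making the combinatorics of the canonical form completely explicit. Represent $\mathbf p$ by a canonical transition matrix $\left(\begin{smallmatrix} z^j & \mathbf p \\ 0 & z^{-j}\end{smallmatrix}\right)$ as in Def.~\ref{canonical}. By (\ref{extensions}), two classes $\mathbf p = \sum_n p_n\hbar^n$ and $\mathbf p' = \sum_n p'_n\hbar^n$ are equivalent exactly when there exist $b = \sum_n b_n\hbar^n \in \mathcal A(U)$ and $\beta = \sum_n \beta_n\hbar^n \in \mathcal A(V)$ with $\mathbf p + \beta \star z^{-j} = \mathbf p' + z^j \star b$; collecting powers of $\hbar$, the $n$-th equation reads
\[
p_n + \beta_n z^{-j} + \sum_{i=1}^n B_i(\beta_{n-i}, z^{-j}) \;=\; p'_n + z^j b_n + \sum_{i=1}^n B_i(z^j, b_{n-i}).
\]
The key point is that the two sums of bidifferential operators involve only $b_m,\beta_m$ with $m<n$, so once those are fixed the quantity $S_n := \sum_{i=1}^n B_i(\beta_{n-i}, z^{-j}) - \sum_{i=1}^n B_i(z^j, b_{n-i}) \in \mathcal O(U\cap V)$ is a known function and the $n$-th equation becomes $p'_n = (p_n + S_n) - z^j b_n + \beta_n z^{-j}$.

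The induction on $n$ then goes as follows. For $n=0$ one has $S_0 = 0$ and the relation is the classical coboundary equation, so \cite[Thm.\ 3.3]{gasparim} provides $b_0 \in \mathcal O(U)$, $\beta_0 \in \mathcal O(V)$ reducing $p_0$ to the canonical form $\sum_{i=0}^{2j-2}\sum_{l=i-j+1}^{j-1} p^0_{il}\,z^l u^i$. Suppose $b_m,\beta_m$ ($m<n$) have been chosen so that $p_0,\dots,p_{n-1}$ are in this form; then $S_n \in \mathcal O(U\cap V)$ is determined, $\tilde p_n := p_n + S_n \in \mathcal O(U\cap V)$, and as $b_n$ ranges over $\mathcal O(U)$ and $\beta_n$ over $\mathcal O(V)$, the entry $p'_n = \tilde p_n - z^j b_n + \beta_n z^{-j}$ ranges over the coset of $\tilde p_n$ modulo $z^j\,\mathcal O(U) + z^{-j}\,\mathcal O(V)$. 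But $z^j\,\mathcal O(U) + z^{-j}\,\mathcal O(V)$ is exactly the space of classical coboundaries computing $\H^1(Z_1,\mathcal O(-2j)) \simeq \Ext^1_{\mathcal O}(\mathcal O(j),\mathcal O(-j))$: in $U$-coordinates it is spanned by the monomials $z^l u^i$ with $l\ge j$ together with those with $l\le i-j$, so the complementary monomials are precisely those with $i-j+1\le l\le j-1$, which is a finite set and forces $0\le i\le 2j-2$. Hence \cite[Thm.\ 3.3]{gasparim} applies verbatim to $\tilde p_n$ and yields $b_n,\beta_n$ bringing $p_n$ into the form $\sum_{i=0}^{2j-2}\sum_{l=i-j+1}^{j-1} p^n_{il}\,z^l u^i$. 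This closes the induction.

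I expect no genuine obstacle here: the content is entirely the order-by-order bookkeeping, and the one thing to keep straight is that at each stage the correction $S_n$ is already pinned down by the choices made at lower orders, so the residual freedom $p'_n \mapsto p'_n - z^j b_n + \beta_n z^{-j}$ coincides with the classical gauge freedom for $\H^1(Z_1,\mathcal O(-2j))$, to which the classical normal form of \cite[Thm.\ 3.3]{gasparim} applies unchanged. In this sense Lemma~\ref{z1} is just the specialization of Lemma~\ref{subspace} to $k=1$, where $\lfloor (2j-2)/k\rfloor = 2j-2$ and $ik-j+1 = i-j+1$.
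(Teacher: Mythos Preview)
Your proof is correct and takes essentially the same approach as the paper: both argue by induction on $n$ that at each order the residual freedom in $(b_n,\beta_n)$ is precisely the classical coboundary freedom $z^j\,\mathcal O(U) + z^{-j}\,\mathcal O(V)$, so the classical normal form of \cite[Thm.~3.3]{gasparim} applies at every step. The paper's version writes out the $n=1,2$ equations explicitly using the Moyal product and then asserts the pattern persists, whereas you package the same observation as a clean induction on the quantity $S_n$; the content is identical.
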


\begin{proof}
On the $U$ chart, the holomorphic Poisson structure $\sigma$ is induced by the bivector $\frac{\partial}{\partial z} \wedge \frac{\partial}{\partial u}$, and the corresponding star product is given by the Moyal product (\ref{moyal}).

As in the proof of Lem.~\ref{subspace}, elements ${\bf p} = p_0 + \sum_{n=1}^\infty p_n \hbar^n$ and ${\bf p}' = p'_0 + \sum_{n=1}^\infty p'_n \hbar^n$
in $\Ext^1 (\mathcal A (j), \mathcal A (-j))$ are equivalent if there exist two functions $b \in \mathcal A (U)$ and $\beta \in \mathcal A (V)$ of the form (\ref{bbeta}) such that
\[
{\bf p}'= {\bf p} + \beta \star z^{-j} - z^j \star b.
\]
We carry out calculations on the $U$ chart, hence using the Moyal product.
For $n = 1,2$ this gives:
\begin{align*}
 p'_1 = p_1 &{}+ \beta_1 z^{-j} - \big( \tfrac{\partial}{\partial u} \beta_0 \big) \big( \tfrac{\partial}{\partial z} z^{-j} \big) - z^j b_1 + \big( \tfrac{\partial}{\partial z} z^j \big) \big( \tfrac{\partial}{\partial u} b_0 \big) \\
p_2' = p_2 &{}+ \beta_2 z^{-j} - \big( \tfrac{\partial}{\partial u} \beta_1 \big) \big( \tfrac{\partial}{\partial z} z^{-j} \big) - \big( \tfrac{\partial^2}{\partial u^2} \beta_0 \big) \big( \tfrac{\partial^2}{\partial z^2} z^{-j} \big) \notag\\
&{} - z^j b_2 + \big( \tfrac{\partial}{\partial z} z^j \big) \big( \tfrac{\partial}{\partial u} b_1 \big) + \big( \tfrac{\partial^2}{\partial z^2} z^j \big) \big( \tfrac{\partial^2}{\partial u^2} b_0 \big)
\end{align*}
etc. But we then find out that ultimately the calculations repeat at each step the same type of calculation done on neighbourhood zero, thus at each step the shape of the polynomial is the same as what we have in the classical limit. 
\end{proof}

\begin{theorem}
\label{ncextformal}
Let $\sigma$ be a Poisson structure on $Z_k$ tangent to the divisor $Z_k \setminus U$ and let $\mathcal A$ be the quantization of the open immersion $U \subset Z_k$. Then
\[
\Ext^1_{\mathcal A} (\mathcal A (j), \mathcal A (-j)) \simeq \Ext^1_{\mathcal O} (\mathcal O (j), \mathcal O (-j)) \llrr{\hbar}.
\]
\end{theorem}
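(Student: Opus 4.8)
The plan is to upgrade the injection of Lemma~\ref{subspace} to an isomorphism; the only thing missing is surjectivity. Recall that a class in $\Ext^1_{\mathcal A}(\mathcal A(j),\mathcal A(-j))$ is represented by a canonical transition matrix $\left(\begin{smallmatrix} z^j & \mathbf p\\ 0 & z^{-j}\end{smallmatrix}\right)$ with $\mathbf p=\sum_{n\geq 0}p_n\hbar^n\in\mathcal A(U\cap V)$, two such being identified via (\ref{extensions}), and that Lemma~\ref{subspace} produces a well-defined, $\mathbb C\llrr{\hbar}$-linear, injective map $\iota\colon \Ext^1_{\mathcal A}(\mathcal A(j),\mathcal A(-j))\hookrightarrow \Ext^1_{\mathcal O}(\mathcal O(j),\mathcal O(-j))\llrr{\hbar}$ sending a class to the tuple of canonical representatives (\ref{formpn}) of the $\hbar$-coefficients of any representative $\mathbf p$. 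Throughout, $\mathcal A$ is the explicit quantization furnished by Proposition~\ref{extends}, which is legitimate precisely because $\sigma$ is tangent to $D=Z_k\setminus U$: this is what makes the Kontsevich star product on $U$ a genuine global star product, so that $z^j\star b$ and $\beta\star z^{-j}$ in (\ref{extensions}) are bona fide elements of $\mathcal A(U)$, resp.\ $\mathcal A(V)$.

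For surjectivity I would argue as follows. Given an arbitrary $\mathbf q=\sum_{n\geq 0}q_n\hbar^n$ in $\Ext^1_{\mathcal O}(\mathcal O(j),\mathcal O(-j))\llrr{\hbar}$, with each $q_n$ of the canonical form (\ref{formpn}), regard $\mathbf q$ as an element of $\mathcal A(U\cap V)=\mathcal O(U\cap V)\llrr{\hbar}$ and let $\mathcal E$ be the extension defined by the transition matrix $\left(\begin{smallmatrix} z^j & \mathbf q\\ 0 & z^{-j}\end{smallmatrix}\right)$. Since every $q_n$ is already in the form (\ref{formpn}), bringing this transition matrix to canonical form requires the trivial gauge transformation ($b=\beta=0$ at every order in (\ref{extensions})) and returns $\mathbf q$ unchanged; hence $\iota([\mathcal E])=\mathbf q$. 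Together with the injectivity of $\iota$ from Lemma~\ref{subspace} this proves the isomorphism. (The case $k=1$, $\sigma=(1,-\xi)$ is Lemma~\ref{z1}, whose proof is a template for the computation underlying Lemma~\ref{subspace}.)

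The substantive point, and the step I expect to be the main obstacle, is therefore not surjectivity but the well-definedness and injectivity of $\iota$ that are packaged into Lemma~\ref{subspace}: one must show that solving (\ref{extensions}) order by order never creates a canonical representative outside the classical range $ik-j+1\leq l\leq j-1$, i.e.\ that the correction terms $\sum_{i\geq 1}B_i(z^j,b_{n-i})$ and $\sum_{i\geq 1}B_i(\beta_{n-i},z^{-j})$, which couple $\hbar$-order $n$ to the lower orders, remain confined to $z^j\mathcal A(U)+\mathcal A(V)\star z^{-j}$ modulo canonical monomials --- equivalently, that the ``relations'' $q_n\simeq q_n+\sum_i p_ip_{n-i}$ appearing in the short proof of Lemma~\ref{subspace} are vacuous. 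This confinement is the noncommutative counterpart of the monomial-bound argument in the proof of Proposition~\ref{extends}, and tangency of $\sigma$ to the fibre $D$ is essential for it, as the failure of the holomorphic symplectic structure on $Z_2$ to extend (Remark~\ref{doesnotextendz2}) makes clear; with that confinement in hand, the $\hbar$-order-$n$ equation is, modulo lower-order data, the classical cohomology equation, solved because $\H^1(Z_k,\mathcal O(2j))=0$.
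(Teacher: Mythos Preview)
Your overall strategy matches the paper's: show that the map of Lemma~\ref{subspace} is a bijection, with surjectivity being trivial (any $\mathbf q\in\Ext^1_{\mathcal O}\llrr{\hbar}$ already in canonical form represents an $\mathcal A$-extension that maps to itself). You also correctly diagnose that the substantive content is not surjectivity but the irreducibility of the canonical form, i.e.\ injectivity. However, you defer this step to Lemma~\ref{subspace}, whereas the paper does the opposite: Lemma~\ref{subspace} only shows that every class \emph{admits} a canonical representative, and Theorem~\ref{ncextformal} is precisely where uniqueness is established. So your proposal, as written, leaves the main step unproved.

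The concrete mechanism you are missing is the one the paper actually uses. For $k\geq 2$, Proposition~\ref{tangentpoisson} gives $\sigma_U=u\,f_U+zu\,g_U$, so $\sigma_U$ is divisible by $u$. Consequently every bidifferential operator $B_n$ in the Kontsevich star product (each unfilled vertex contributing a factor of $\sigma_U$ or a derivative thereof) never lowers the $u$-exponent of its arguments. Hence the correction terms $\sum_{i\geq 1}B_i(z^j,b_{n-i})$ and $\sum_{i\geq 1}B_i(\beta_{n-i},z^{-j})$ in (\ref{extensions}) cannot produce monomials of $u$-degree small enough to interfere with the canonical range (\ref{formpn}); thus two canonical representatives can only be gauge-equivalent if they are equal. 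For $k=1$ the same conclusion is obtained by the explicit Moyal computation of Lemma~\ref{z1}. Your gesture towards ``confinement'' and Proposition~\ref{extends} is in the right direction, but you never isolate the divisibility of $\sigma_U$ by $u$, which is the actual reason the argument works.

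Finally, the vanishing $\H^1(Z_k,\mathcal O(2j))=0$ is not what is used here. That vanishing is the input to filtrability (Theorem~\ref{ncfiltrable}); the Ext computation lives in $\H^1(Z_k,\mathcal O(-2j))$, which is nonzero, and the point is that the canonical monomials form a set of representatives for it. Once the correction terms are shown not to touch those monomials, uniqueness follows immediately from the classical description of $\H^1(Z_k,\mathcal O(-2j))$; no additional cohomology vanishing is needed.
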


\begin{proof}
Recall from the proof of Lem.~\ref{subspace} that an extension class ${\bf p} \in \Ext^1_{\mathcal A} (\mathcal A (j), \mathcal A (-j))$ may be reduced to the form ${\bf p} = p_0 + \sum_{n=1}^\infty p_n \hbar^n$, where $p_i \in \Ext^1_{\mathcal O} (\mathcal O (j), \mathcal O (-j))$. It remains to show that this form cannot be reduced further.

For $Z_1$, this is the content of Lem.~\ref{z1}. For $Z_{\geq 2}$, Prop.~\ref{tangentpoisson} shows that for $\sigma$ tangent to $D$, $\sigma_U = u \, f_U + zu \, g_U$ for some global functions $f_U, g_U$. Since $\sigma_U$ is a multiple of $u$, it follows that the bilinear operators $B_n$ in the expression of the star product never lower the exponents of $u$, {\it cf.}\ the proof of Prop.~\ref{extends}. Thus the star products in (\ref{extensions}) never contain terms with exponents of $u$ low enough to reduce the general form (\ref{formpn}) of $p_n$ any further.
\end{proof}

\begin{definition}
\label{formally}
We say that ${\bf p} = \sum p_n \hbar^n \in \mathcal O \llrr{\hbar}$ is formally algebraic if $p_n$ is a polynomial for every $n$.

We say that a vector bundle over $\mathcal Z_k (\sigma)$ is {\it formally algebraic} if it is isomorphic to a vector bundle given by formally algebraic transition functions. In addition, if there exists $N$ such that $p_n=0$ for all $n>N$, we then say that ${\bf p}$ is {\it algebraic}.
\end{definition}

\begin{corollary}\label{formalg}
Vector bundles on noncommutative deformations of $Z_k$ are formally algebraic.
\end{corollary}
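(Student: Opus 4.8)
The plan is to combine filtrability of bundles over $\mathcal Z_k(\sigma)$ with the order-by-order reduction of extension classes to canonical polynomial form that was already carried out in the proof of Lemma~\ref{subspace}.

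First I would invoke Theorem~\ref{ncfiltrable}: a rank~$r$ bundle $\mathcal E$ over $\mathcal Z_k(\sigma)$ admits a filtration by subbundles whose successive quotients are line bundles, so in the canonical coordinate charts of $Z_k$ it can be presented by an upper-triangular transition matrix. By Proposition~\ref{nclinebundles} the diagonal entries may be taken to be the transition functions $z^{m_1}, \dotsc, z^{m_r}$ of line bundles $\mathcal A(-m_i)$; these carry no $\hbar$-dependence at all and hence are trivially formally algebraic, so everything reduces to controlling the entries strictly above the diagonal.

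Each such entry is an extension class relating two of the line-bundle quotients, i.e.\ an element ${\bf p} = \sum_{n \ge 0} p_n \hbar^n$ with each $p_n$ lying in a classical extension group $\Ext^1_{\mathcal O}(\mathcal O(m'), \mathcal O(m))$, exactly as in Lemma~\ref{subspace}. I would then argue, just as in the proof of that lemma, that one can choose gauge functions $b_n, \beta_n$ inductively at each order $\hbar^n$ and use equation~(\ref{extensions}) to bring $p_n$ into the classical canonical form~(\ref{formpn}): the operations involved --- applying the bidifferential operators $B_i$ of the star product and then solving the resulting cohomological equations exactly as in the commutative case (Definition~\ref{canonical}) --- send polynomials to polynomials and couple only orders $\le n$, so at every stage the coefficient $p_n$ comes out a polynomial. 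Doing this one super-diagonal strip at a time (the strip nearest the main diagonal first, as in the first proof of Theorem~\ref{ncfiltrable}) yields an isomorphic bundle whose transition matrix has polynomial $\hbar$-coefficients, which is exactly the assertion. No tangency hypothesis on $\sigma$ enters, since Lemma~\ref{subspace} is valid for an arbitrary deformation quantization of $\mathcal O_{Z_k}$.

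The step I expect to be the main obstacle is the bookkeeping for rank $r > 2$: reducing one super-diagonal strip to canonical form perturbs the strips lying farther from the diagonal, so the reductions must be ordered carefully and one must check that the perturbations so introduced are again polynomial at each order in $\hbar$ --- which they are, once more because the $B_i$ are differential operators and the cohomology-vanishing argument preserves polynomiality. Granting this, formal algebraicity follows immediately from Definition~\ref{formally}, since it is enough to exhibit one isomorphic model with polynomial $\hbar$-coefficients.
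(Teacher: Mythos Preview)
Your proposal is correct and follows essentially the same approach as the paper: the paper's proof simply invokes Lemma~\ref{subspace} to handle rank~$2$ and then appeals to filtrability (Theorem~\ref{ncfiltrable}) to pass to arbitrary rank, which is exactly the skeleton you flesh out. Your additional remarks on the higher-rank bookkeeping and the irrelevance of any tangency hypothesis on~$\sigma$ are accurate elaborations of what the paper leaves implicit.
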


\begin{proof}
Lemma~\ref{subspace} shows that rank $2$ vector bundles over $\mathcal Z_k (\sigma)$ are formally algebraic, and in light of Thm.~\ref{ncfiltrable}, we obtain the result for all ranks.
\end{proof}

\begin{remark}
A quantization of $Z_k$ will also give a quantization of the singular affine surface $X_k = \Spec (\H^0 (Z_k, \mathcal O))$, obtained from $Z_k$ by contracting the zero section $\ell$ to a point. ($X_k$ contains an isolated $\frac1k (1,1)$ singularity, see \S\ref{instantons} and Fig.~\ref{fans}.) Indeed, a quantization of $Z_k$ gives a star product on $\mathcal O_{Z_k} (U)$ for all open sets $U$, and thus in particular a star product on the algebra of global functions $\H^0 (Z_k, \mathcal O) \simeq \H^0 (X_k, \mathcal O)$.

Note that Poisson structures on singular affine {\it toric} varieties (of which $X_k$ is a particular case) are known to always admit quantizations (see Filip \cite{filip}), but in general there are also counterexamples to the quantization of singular Poisson algebras (see Mathieu \cite{mathieu} and also Schedler \cite{schedler}).
\end{remark}

Since $\Ext^1_{Z_k} (\mathcal O (j), \mathcal O (-j))$ is finite dimensional, we have that if $\mathcal Z_k (\sigma)$ is a noncommutative deformation having finite order in $\hbar$, then the space of extensions $\Ext^1_{\mathcal A} (\mathcal A (j), \mathcal A (-j))$ is also finite dimensional. For moduli of vector bundles on $\mathcal Z_k (\sigma)$ up to all orders of $\hbar$ see Rem.~\ref{higherhbar}.

\section{Moduli of bundles on noncommutative deformations}
\label{ncmoduli}

We now calculate moduli spaces for vector bundles on deformation quantizations of $(Z_k, \sigma)$ for $\sigma$ a holomorphic Poisson structure.

\begin{remark}
Analogous to the situation of Rem.~\ref{construct}, one has filtrability of vector bundles also in the noncommutative case (Lem.~\ref{ncfiltrable}) so that moduli of rank $2$ vector bundles can be described as quotients of $\Ext^1$ of line bundles. Furthermore, we have formal algebraicity (Cor.~\ref{formalg}) as the extension groups are $\mathbb N$-graded by powers of $\hbar$ with finite-dimensional graded components, or indeed finite-dimensional if we take $\hbar$ only up to a fixed finite power. We then obtain finite-dimensional quotients taking $\Ext^1$ modulo bundle isomorphisms, where here isomorphisms are defined in Def.~\ref{nciso} using the (noncommutative) star product. 
\end{remark}

We thus may proceed as in the classical (commutative) setting and extract moduli spaces from extension groups of line bundles, by considering extension classes up to bundle isomorphism.
 
\begin{notation} 
\label{modulinc}
We denote by $\mathfrak M_j (\mathcal Z_k (\sigma))$ the subspace of the quotient \mbox{$\Ext^1_{\mathcal A} (\mathcal A (j), \mathcal A (-j))/{\sim}$} consisting of those classes of formally algebraic vector bundles, whose classical limit is a stable vector bundle of charge $j$. Here $\sim$ denotes bundle isomorphism as in Def.~\ref{nciso}. We denote by $\mathfrak M_j^{(n)} (\mathcal Z_k (\sigma))$ the moduli of bundles obtained by imposing the cut-off $\hbar^{n+1} = 0$, that is, the superscript $^{(n)}$ means quantized to level $n$. Accordingly $\mathfrak M_j^{(1)} (\mathcal Z_k (\sigma))$ stands for first-order quantization and $\mathfrak M_j^{(0)} (\mathcal Z_k (\sigma)) = \mathfrak M_j (Z_k)$ recovers the classical moduli space of Def.~\ref{modulic} obtained when $\hbar = 0$.
\end{notation}

\begin{definition}\label{split}
The {\it splitting type} of a vector bundle $E$ on $\mathcal Z_k (\sigma)$ is defined to be the splitting type of its classical limit as in Def.~\ref{type}. Hence, when the classical limit is an $\mathrm{SL}(2,\mathbb C)$ bundle, the splitting type of $E$ is the smallest integer $j$ such that $E$ can be written as an extension of $\mathcal A (j)$ by $\mathcal A (-j)$. 
\end{definition}

We will look at rank $2$ bundles of a fixed splitting type $j$ on the first formal neighbourhood $\ell^{(1)}$ of $\ell \subset Z_k$. In order for the relevant space of extensions to be non-zero, one should assume $k \leq 2j - 2$. Note that whenever $j \leq k \leq 2j-2$ the {\it full} moduli of splitting type $j$ bundles on $Z_k$ is supported on $\ell^{(1)}$. For general $k$, the moduli spaces of bundles on $\ell^{(1)}$ are dense open subspaces of the full moduli spaces of bundles on $Z_k$, so even by restricting to bundles on $\ell^{(1)}$ one obtains a partial description of the moduli space of bundles on all of $Z_k$. We thus refrain from introducing new notation, keeping the same notation as in Not.~\ref{modulinc}.

Moreover, we present the calculation only up to first order in $\hbar$, although we note that the explicit formulae of the star products given in \S \ref{starproducts} enable one to determine the moduli also to higher orders in $\hbar$ (see Rem.~\ref{higherhbar}).

Let $p + p' \hbar$ and $q + q' \hbar$ be two extension classes in $\Ext^1_{\mathcal A} (\mathcal A (j), \mathcal A (-j))$ which are of splitting type $j$, {\it i.e.}\ in canonical $U$-coordinates $p, p', q, q'$ are multiples of $u$.

The bundles defined by $p + p' \hbar$ and $q + q' \hbar$ are isomorphic, if there exist invertible matrices
\begin{equation*}
\begin{pmatrix}
a + a' \hbar & b + b' \hbar \\
c + c' \hbar & d + d' \hbar
\end{pmatrix}
\quad\text{and}\quad
\begin{pmatrix}
\alpha + \alpha' \hbar & \beta  + \beta'  \hbar \\
\gamma + \gamma' \hbar & \delta + \delta' \hbar
\end{pmatrix}
\end{equation*}
whose entries are holomorphic on $U$ and $V$, respectively, such that
\begin{align}
\label{equivalencefirstordermoduli}
\begin{pmatrix}
\alpha + \alpha' \hbar & \beta  + \beta'  \hbar \\
\gamma + \gamma' \hbar & \delta + \delta' \hbar
\end{pmatrix}
\star
\begin{pmatrix}
z^j & q + q' \hbar \\
 0  & z^{-j}
\end{pmatrix}
=
\begin{pmatrix}
z^j & p + p' \hbar \\
 0  & z^{-j}
\end{pmatrix}
\star
\begin{pmatrix}
a + a' \hbar & b + b' \hbar \\
c + c' \hbar & d + d' \hbar
\end{pmatrix}.
\end{align}

We wish to determine the constraints such an isomorphism imposes on the coefficients of $q$ and $q'$. This is more convenient if rewritten by right-multiplying (\ref{equivalencefirstordermoduli}) with the inverse of $\left( \begin{smallmatrix} z^j & q + q' \hbar \\  0  & z^{-j} \end{smallmatrix} \right)$, or more precisely by the right inverse with respect to $\star$, which (modulo $\hbar^2$) is
\[
\begin{pmatrix}
z^{-j} & -q - q' \hbar + 2 z^{-j} \{ z^j, q \} \\
   0   & z^j
\end{pmatrix}.
\]

On $\ell^{(1)}$ we have that $u^2=0$ and therefore $a = a_0 + a_1 u$, $\alpha = \alpha_0 + \alpha_1 u$, etc., where $a_1$, $\alpha_1$, etc.\ are holomorphic functions in $z$.

We first observe that for the classical limit the calculations are given in \cite[\S 3.1]{gasparim}. In particular, following the details of the proof of [{\it ibid.}, Prop.\ 3.3], we may assume that $a_0 = \alpha_0$ are constant, $d_0 = \delta_0$ are constant, and $b = \beta = 0$. Since we already know that on the classical limit the only equivalence on $\ell^{(1)}$ is given projectivization, we may assume that $p = q$ keeping in mind that there is a projectivization to be done in the end. We may also assume that the determinants of the changes of coordinates on the classical limit are $1$. Accordingly, we may simplify (\ref{equivalencefirstordermoduli}) to:
\begin{align}
\label{simplified}
&\begin{pmatrix}
\alpha + \alpha' \hbar & \beta' \hbar \\
\gamma + \gamma' \hbar & \delta + \delta' \hbar
\end{pmatrix} \notag \\
&\qquad
=
\begin{pmatrix}
z^j & p + p' \hbar \\
 0  & z^{-j}
\end{pmatrix}
\star
\begin{pmatrix}
a + a' \hbar & b' \hbar \\
c + c' \hbar & d + d' \hbar
\end{pmatrix}
\star
\begin{pmatrix}
z^{-j} & -p -(q' - 2  \{ z^j, p \}z^{-j}) \hbar \\
 0  & z^j
\end{pmatrix}
\end{align}
where $a_0 = d_0 = \alpha_0 = \delta_0 = 1$.

Since we already know the moduli on the classical limit,
we only need to study the terms containing $\hbar$, which after multiplying are:
\begin{align*}
(1,1) &= a' +\{ z^j a, z^{-j} \} + \{ z^j, a \} z^{-j} +\{ p c, z^{-j} \} + \{ p, c \} z^{-j} + (p c' + p' c) z^{-j} \\
(2,1) &= z^{-2j} c' \\
(1,2) &= -\{ a, p \}z^j - \{ z^j, a \} p + \{ z^j, p \} a + \{ p d, z^j \} + \{ p, d \} + 2 z^{-j} \{ z^j, p \} p c\\
&\quad + z^{2j} b' - (p a' + q' a)z^j+ (p d' + p' d)z^j - (p c' + p' c + q' c) p \\
(2,2) &= d' + \{ z^{-j} d, z^j \} + \{ z^{-j}, d \} z^j - \{ z^{-j} c, p \} - \{ z^{-j}, c \} p- (p c'+ q' c) z^{-j} \\
& \quad + 2 \{ z^j, p \} z^{-2j} c .
\end{align*}

All four terms must be adjusted by using free variables to only contain expressions which are holomorphic on $V$ in order to satisfy (\ref{simplified}). For example, the $(2,1)$ term shows that this condition is satisfied precisely when $c'$ is a section of $\mathcal O (2j)$. A simple verification by computing the Poisson brackets shows that the $(1,1)$ and $(2,2)$ terms can always be adjusted by choosing, say, $c$ and $d'$ appropriately, leaving the coefficients of $a'$ free.

It remains to analyze the term $(1,2)$. Because we are working on the first formal neighbourhood of $\ell$, we may drop terms in $u^2$ (recall that we assume that $p, p', q'$ are multiples of $u$), giving:
\begin{equation}
\label{12term}
\begin{aligned}
(1,2) &= -\{ a, p \} z^j - \{ z^j, a \} p + \{ z^j, p \} a + \{ p d, z^j \} + \{ p, d \} + 2 \{ z^j, p \} p c\\
&\quad + z^{2j} b' - ( pa' + q'a) z^j + (p d' + p' d) z^j .
\end{aligned}
\end{equation}

Since $z^{2j} b'$ is there to cancel out any possible terms having power of $z$ greater or equal to $2j$, we only need to consider the coefficients of the monomials
\begin{equation}
\label{relevantmonomials}
z, z^2, \dotsc, z^{2j-1}
\quad\text{and}\quad
z^{k+1} u, \dotsc, z^{2j-1} u.
\end{equation}

We now calculate moduli spaces for particular values of $k$ and $j$ and for different choices of Poisson structure. We will use the notation $P \in \mathfrak M_j (Z_k)$ to refer to a point in the moduli space, which we write in canonical coordinates as
\begin{equation}\label{point}
P = [p_{1,k-j+1} : p_{1,k-j+2} : \dotsb : p_{1,j-1}]
\end{equation}
so that $P$ corresponds to the isomorphism class of the bundle defined by
\[
\begin{pmatrix}
z^j & p \\
 0  & z^{-j}
\end{pmatrix}
\]
for $p = p_{1,k-j+1} z^{k-j+1} u + \dotsb + p_{1,j-1} z^{j-1} u$.

\begin{example}[$k = 1$ and $j = 2$ and $\sigma_0$] \label{p1}
Consider the Moyal product on $U \subset Z_1$, which by Prop.~\ref{extends} extends to all of $Z_1$.
We calculate the moduli space of vector bundles of splitting type $2$ on $\ell^{(1)}$.

Recall that for these values of $k$ and $j$ the terms appearing in (\ref{12term}) take the following form:
\begin{flalign*}
&& a &= 1 + a_1 (z) u & p &= (p_{10} + p_{11} z) u && \\
&& c &= \textstyle\sum_{l=0}^4 c_{0,l} z^l + \big( \sum_{l=0}^5 c_{1,l} z^l \big) u & p' &= (p'_{10} + p'_{11} z) u && \\
&& d &= 1 + d_1 (z) u & q' &= (q'_{10} + p'_{11} z) u. &&
\end{flalign*}

After calculating the Poisson brackets appearing in (\ref{12term}), one finds that the coefficients of $z$, $z^2$, $z^3$ vanish. Thus, we only need to analyze the coefficients of $z^2u$ and $z^3u$. The required vanishing of the coefficient of $z^2 u$ imposes the condition:
\begin{equation}
\label{q10sigma0}
\begin{aligned}
p'_{10} - q'_{10} &= (a'_{00} -d'_{00} + a_{11} + 5 d_{11}) p_{10} - (a_{10} - 3 d_{11}) p_{11} \\
&\qquad - 4 (c_{01} p_{11}^2 + 2 c_{02} p_{10} p_{11} + c_{04} p_{10}^2).
\end{aligned}
\end{equation}
Similarly, the vanishing of the coefficient of $z^3 u$ imposes:
\begin{equation}
\label{q11sigma0}
\begin{aligned}
p'_{11} - q'_{11} &= (a'_{01} - d'_{01} + 4 a_{12}) p_{10} - (-a'_{00} + d'_{00} - d_{11}) p_{11} \\
&\qquad - 4 (c_{02} p_{11}^2 + 2 c_{03} p_{10} p_{11} + c_{04} p_{10}^2).
\end{aligned}
\end{equation}
Since for $P \in \mathfrak M_2 (Z_1)$ its coefficients $p_{10}$ and $p_{11}$ do not vanish simultaneously, we can choose coefficients of $a, d$ or $c$ to solve both (\ref{q10sigma0}) and (\ref{q11sigma0}). We observe that two out of $a, d, c$ will already have been fixed on a previous step, when canceling coefficients in the $(1,1)$ and $(2,2)$ terms, but there remains always one of them free to be chosen. 
Hence, we can solve both equations for any values of $q'_{10}$ and $q'_{11}$, so that for Poisson structure $\sigma_0 = (1, -\xi)$ two bundles over $\mathcal Z_1 (\sigma_0)$ are isomorphic whenever their classical limits are, giving an isomorphism of moduli spaces
$$
\mathfrak M_2^{(1)} (\mathcal Z_1 (\sigma_0)) \simeq \mathfrak M_2 (Z_1).
$$
\end{example}

\begin{example}[$k = 1$ and $j = 2$ and $\sigma = u$]
\label{m2u}
For $\sigma = (u, -\xi^2 v)$, which by Prop.~\ref{extends} defines a global star product on $Z_1$, equations (\ref{q10sigma0}) and (\ref{q11sigma0}) simplify to
\begin{align*}
p'_{10} - q'_{10} &= (a'_{00} - d'_{00}) p_{10} \\
p'_{11} - q'_{11} &= (a'_{01} - d'_{01}) p_{10} + (a'_{00} - d'_{00}) p_{11}
\end{align*}
which can be written as the Toeplitz system
\begin{equation}
\label{toeplitzj2}
\begin{pmatrix}
p'_{11}- q'_{11} \\
p'_{10}- q'_{10} 
\end{pmatrix}
=
\begin{pmatrix}
p_{10} & p_{11} \\
 0 & p_{10}
\end{pmatrix}
\begin{pmatrix}
a'_{01}- d'_{01} \\
a'_{00}- d'_{00} 
\end{pmatrix}.
\end{equation}
It then follows that the moduli space behaviour is quite different from the case studied in Ex.~\ref{p1}.
Here, if $p_{10}\neq 0$ the matrix $\left( \begin{smallmatrix} p_{10} & p_{11} \\ 0 & p_{10} \end{smallmatrix} \right)$ is invertible, so we can solve (\ref{toeplitzj2}) for any $q'$, giving the equivalence relation
\begin{flalign*}
&& (p_{10},p_{11}, p'_{10},p'_{11}) \sim (\lambda p_{10},\lambda p_{11}, q'_{10},q'_{11}) && \mathllap{\text{if } p_{10}\neq 0.}
\end{flalign*}
So that the fibre of the projection 
$$
\mathfrak M_2^{(1)} (\mathcal Z_1 (\sigma)) \stackrel{\pi}{\longrightarrow} \mathfrak M_2 (Z_1).
$$
to the classical limit is just a point provided $p_{10}\neq 0$.

However, if $p_{10}=0$, that is, over the single point $P=[0:1]$ in the classical moduli space $\mathfrak M_2(Z_1)$, (\ref{toeplitzj2}) imposes the additional constraint that $q'_{10} = p'_{10}$. (The coefficient $q'_{11}$ remains arbitrary
because $p_{11}$ must be nonzero in this case.)

We thus obtain the following equivalence relation:
\begin{flalign*}
&& (p_{10}, p_{11}, p'_{10}, p'_{11}) \sim (\lambda p_{10}, \lambda p_{11}, p'_{10}, q'_{11}) && \mathllap{\text{if } p_{10} = 0.}
\end{flalign*}
Here we observe that both $p'_{11}$ and $q'_{11}$ are arbitrary, so that the resulting equivalence relation on the fibre over $P = [0:1] \in \mathfrak M_2 (Z_1)$
can equivalently be represented by 
$$
(0, p_{11}, p'_{10}, *\hair) \sim (0, \lambda p_{11}, p'_{10}, *\hair)
$$
(where $*$ denotes an arbitrary complex number) is parametrized by the different values of $p'_{10} \in \mathbb C$. Therefore, we have obtained that the fibre $L=\pi^{-1}([0:1])= [0:1]\times \mathbb C$ is an affine line.
Equivalently, the collection of points $L \subset \mathfrak M_2^{(1)} (\mathcal Z_1(\sigma))$ that have $P$ as its classical limit is $L = [0:1] \times \mathbb C$. We can thus view
\begin{equation*}
\begin{tikzpicture}[baseline=-.4em]
\matrix (m) [matrix of math nodes, row sep=1.25em, inner sep=2pt,
column sep=0, ampersand replacement=\&]
{\mathfrak M_2^{(1)}(\mathcal Z_1(\sigma)) \\ \mathfrak M_2( Z_1) \\};
\path[-stealth,line width=.6pt,font=\scriptsize]
(m-1-1) edge (m-2-1)
;
\end{tikzpicture}
\end{equation*}
as the étale space of a skyscraper sheaf supported at $P$.
\end{example}

\begin{example}[$k = 1$ and $j = 3$ and $\sigma_0 = 1$]
\label{p13}
Here the imposed constraints are that the coefficients of $z^2 u, z^3 u, z^4 u, z^5 u$ must vanish. To illustrate the calculation, we list the first two. The analogues of equations (\ref{q10sigma0}) and (\ref{q11sigma0}), corresponding to the vanishing of the coefficients of $z^2 u$ and $z^3 u$, are
\begin{align*}
p'_{1{-1}} {-}\hair q'_{1{-1}} &= (a'_{00} - d'_{00} + 2 a_{11} + 8 d_{11}) p_{1{-1}} + 6 d_{10} p_{10} \\
&\qquad - 6 \Big( c_{00} (p_{11}^2 + 2 p_{10} p_{12}) + 2 c_{01} (p_{10} p_{11} + p_{1{-1}} p_{12}) \\
&\qquad \phantom{ + 6 \Big(} \quad + c_{02} (p_{10}^2 + 2 p_{1{-1}} p_{11}) + 2 c_{03} p_{1{-1}} p_{10} + c_{04} p_{1{-1}}^2 \Big) \\
p'_{10} - q'_{10} &= (a'_{01} {-}\hair d'_{01} {+}\hair 3 a_{12} {+}\hair 9 d_{12}) p_{1{-1}} + (a'_{00} {-}\hair d'_{00} {+}\hair a_{11} {+}\hair 7 d_{11}) p_{10} - (a_{10} {-}\hair 5 d_{10}) p_{11} \\
&\qquad - 6 \Big( 2 c_{00} p_{11} p_{12} + c_{01} (p_{11}^2 + 2 p_{10} p_{12}) + 2 c_{02} (p_{10} p_{11} + p_{1{-1}} + p_{12}) \\ 
&\qquad \phantom{ + 6 \Big(} \quad + c_{03} (p_{10}^2 + 2 p_{1{-1}} p_{11}) + 2 c_{04} p_{1{-1}} p_{10} + c_{05} p_{1{-1}}^2 \Big) 
\end{align*}
and similarly for the coefficients of $z^4u$ and $z^5u$. It then turns out that over the points belonging to the moduli space $\mathfrak M_3 (Z_1)$ we can solve all the constraint equations, so that $q$ is arbitrary, and once again as in Ex.~\ref{p1} we obtain an isomorphism between the quantum and the classical moduli:
$$
\mathfrak M_3^{(1)}(\mathcal Z_1(\sigma_0)) \simeq \mathfrak M_3 (Z_1).
$$
\end{example}

Generalizing Exs.~\ref{p1} and \ref{p13} we obtain the following theorem.

\begin{theorem}\label{iso}
Let $\sigma_0$ be the Poisson structure which in canonical coordinates is $\sigma_0=(1, -\xi)$. Then for each $j$ we obtain an isomorphism
\[
\mathfrak M_j^{(1)} (\mathcal Z_1(\sigma_0)) \simeq \mathfrak M_j (Z_1).
\]
\end{theorem}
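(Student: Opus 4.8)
The plan is to carry out, for arbitrary $j$, the same computation already performed in Examples~\ref{p1} and~\ref{p13}, and to check that its conclusion is uniform in $j$. Since $\sigma_0 = (1,-\xi)$ is tangent to the divisor $D = Z_1 \setminus U$ (Prop.~\ref{tangentpoisson}), the Moyal product on $U \simeq \mathbb C^2$ extends to a global star product on $Z_1$ (Prop.~\ref{extends}), and Thm.~\ref{ncextformal} gives $\Ext^1_{\mathcal A}(\mathcal A(j),\mathcal A(-j)) \simeq \Ext^1_{\mathcal O}(\mathcal O(j),\mathcal O(-j))\llrr{\hbar}$; hence, before quotienting by bundle isomorphism, the classical-limit map $\mathfrak M_j^{(1)}(\mathcal Z_1(\sigma_0)) \to \mathfrak M_j(Z_1)$ is simply the projection forgetting the $\hbar$-coefficient, and it is already surjective. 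So the theorem amounts to the injectivity statement: for every stable class $p = \sum_{l=2-j}^{j-1} p_{1l} z^{l} u$ in canonical form on $\ell^{(1)}$, and for every pair of first-order terms $p', q'$ (also multiples of $u$), the bundles over $\mathcal Z_1(\sigma_0)$ defined by $p + p'\hbar$ and $p + q'\hbar$ are isomorphic. As in the discussion preceding Example~\ref{p1}, I would first perform the classical reductions $a_0 = d_0 = \alpha_0 = \delta_0 = 1$, $b = \beta = 0$, $p = q$ of \cite[Prop.\ 3.3]{gasparim}, reducing the problem to solving, in the remaining free parameters of \eqref{simplified}, the vanishing of the coefficients of the monomials \eqref{relevantmonomials} in the $(1,2)$-entry \eqref{12term}.

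Next I would dispose of the pure powers $z, z^2, \dots, z^{2j-1}$: after the reductions every summand of \eqref{12term} is a multiple of $u$ except for the $u$-free parts of $\{pd, z^j\}$ and $\{z^j, p\}a$, which are respectively $-j z^{j-1}\sum_l p_{1l} z^l$ and $+j z^{j-1}\sum_l p_{1l} z^l$ (using $a_0 = d_0 = 1$) and therefore cancel; the auxiliary term $z^{2j} b'$ is only needed to absorb the $u$-multiples of $z$-degree $\geq 2j$ produced by the higher Moyal corrections. Hence the coefficients of $z, \dots, z^{2j-1}$ vanish identically and impose no constraint, exactly as observed in Examples~\ref{p1} and~\ref{p13}.

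The substance of the proof is then the system coming from the coefficients of $z^2 u, z^3 u, \dots, z^{2j-1} u$ — that is, $2j-2$ equations, one determining each coefficient $q'_{1l}$ with $2-j \le l \le j-1$ — each of the shape $q'_{1l} = p'_{1l} - \lambda_l$, where $\lambda_l$ is linear in the free parameters with coefficients polynomial in the $p_{1l'}$. I would show that the linear map from the free parameters to $(\lambda_l)_l \in \mathbb C^{2j-2}$ is surjective for every nonzero coefficient vector $(p_{1l'})$ — which is the case since $P = [p_{1,2-j} : \dots : p_{1,j-1}]$ is a point of $\mathfrak M_j(Z_1)$, cf.\ \eqref{point}. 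As in the two examples, the free parameters fall into three families: the $\hbar$-coefficients $a'_{0l}, d'_{0l}$ of the $u$-free parts, whose contribution to the $\lambda_l$ forms a Toeplitz-type expression with symbol $\sum_{l'} p_{1l'} z^{l'}$; the coefficients $a_{1l}, d_{1l}$ of the classically trivial parts $a_1, d_1$, contributing further $p_{1l'}$-weighted, $z$-shifted terms; and the coefficients of the section $c$ of $\mathcal O(2j)$, contributing terms quadratic in the $p_{1l'}$ (these are the $c_{0l}$-terms visible in \eqref{q10sigma0}--\eqref{q11sigma0}, coming from the $2\{z^j, p\}pc$ contribution in \eqref{12term}). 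For each pattern of which $p_{1l'}$ vanish, one selects, using any nonzero $p_{1l'}$, a subset of these parameters on which the system is triangular with nonvanishing diagonal, and solves it; this mirrors the argument used in \cite[\S 3.1]{gasparim} at the classical level, of which the present computation is the first-order analogue. Composing with the projectivization held in reserve from the reduction $p = q$ then yields the claimed isomorphism.

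The step I expect to be the main obstacle is precisely this last one: verifying, uniformly in $j$ and independently of the vanishing pattern of the $p_{1l'}$, that a solvable triangular subsystem always exists among the genuinely free parameters. One must also recall, as noted in Example~\ref{p1}, that clearing the $(1,1)$- and $(2,2)$-entries of \eqref{simplified} already consumes two of $a, d, c$, but always leaves the third available. The remaining ingredients — the Poisson-bracket computations producing \eqref{12term} and the extraction of the coefficients of the monomials \eqref{relevantmonomials} — should be routine, if lengthy, extensions of the two worked examples.
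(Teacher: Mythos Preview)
Your proposal is correct and follows the same approach as the paper, which in fact gives no separate proof beyond the sentence ``Generalizing Exs.~\ref{p1} and \ref{p13} we obtain the following theorem.'' Your write-up is therefore more detailed than the paper's own treatment: you make explicit the cancellation of the $u$-free terms in \eqref{12term}, the structure of the remaining $2j-2$ equations, and the three families of free parameters, all of which the paper leaves implicit in the worked examples. The honest caveat you flag --- that the uniform solvability of the system for arbitrary $j$ and arbitrary nonzero $(p_{1l'})$ is asserted rather than exhibited --- is exactly the point the paper also leaves at the level of ``it then turns out that\dots we can solve all the constraint equations'' (Ex.~\ref{p13}); neither you nor the paper writes down a closed-form choice of parameters for general $j$.
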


\begin{example}[$k = 1$ and $j = 3$ and $\sigma = u$]
\label{j3u}
For $j = 3$, the relevant monomials of (\ref{12term}) are $z^2 u, \dotsc, z^5 u$ and
the vanishing condition can be written as the Toeplitz system
\begin{equation}
\label{toeplitzj3}
\begin{pmatrix}
p'_{12}- q'_{12} \\
p'_{11}- q'_{11}  \\
p'_{10}- q'_{10} \\
p'_{1-1}-q'_{1-1} 
\end{pmatrix}
=
\begin{pmatrix}
 p_{1-1} & p_{10} & p_{11} & p_{12}  \\
0 & p_{1-1} & p_{10} & p_{11} \\
 0 & 0 &  p_{1-1} & p_{10}\\
 0 & 0 & 0    &      p_{1-1} 
\end{pmatrix}
\begin{pmatrix}
a'_{03}- d'_{03}  \\
a'_{02}- d'_{02} \\
a'_{01}- d'_{01}  \\
a'_{00}- d'_{00} 
\end{pmatrix}.
\end{equation}
As in Ex.~\ref{m2u} the fibres of the projection 
\begin{equation*}
\begin{tikzpicture}[baseline=-.4em]
\matrix (m) [matrix of math nodes, row sep=1.5em, inner sep=2pt,
column sep=0, ampersand replacement=\&]
{
\mathfrak M_3^{(1)} (\mathcal Z_1 (\sigma)) \\
\mathfrak M_3 (Z_1) \\
};
\path[-stealth,line width=.6pt,font=\scriptsize]
(m-1-1) edge node[pos=.45,left=-.3ex] {$\pi$} (m-2-1)
;
\end{tikzpicture}
\end{equation*}
vary depending on the coordinates of the point $[p_{1{-1}}:p_{10}:p_{11}:p_{12}] \in \mathfrak M_3 (Z_1) \subset \mathbb P^3$.
We have:
\begin{itemize}
\item If $p_{1-1}\neq 0$ we can solve all the equations by choosing $a'$ and $d'$ appropriately. Hence, 
there exists an isomorphism for any value of $q'$ and consequently the fibre in this case is only a point.
Thus 
$$
S_0 := \{P \in \mathfrak M_3 (Z_1) \mid p_{1-1} \neq 0 \}
$$
is an open set of the classical moduli space over which the map $\pi$ is an isomorphism.

\item Now assume $p_{1-1}= 0$ but $p_{10} \neq 0$. Then the fourth equation imposes the condition 
$q'_{1-1}=p'_{1-1}$ but the other equations can be solved for any values of $q'_{10},q'_{11},q'_{12}$. So that isomorphism gives the equivalence relation
\[
[0:1:p_{11}:p_{12}] (p'_{1-1},p'_{10},p'_{11},p'_{12}) \sim [0: 1:p_{11}:p_{12}](p'_{1-1},*\;,*\;,*\hair)
\]
where $*$ stands for an arbitrary complex value. Thus, the fibre of $\pi$ over a point $[0:1:p_{11}:p_{12}]$ is a copy of $\mathbb C$ parametrized by the different values of $p'_{1-1}$. In other words, setting 
$$
S_1 := \{P \in  \mathfrak M_3 (Z_1) \mid p_{1-1}= 0, \; p_{10} \neq 0 \}
$$
we have that if $P\in S_1$ then $\pi^{-1}(P) \simeq \mathbb C$.

\item Next assume that $p_{1-1}= p_{10}=0$ but $p_{11}\neq 0$, then the third and fourth equations of (\ref{toeplitzj3}) impose the conditions $q'_{1-1}=p'_{1-1}$ and $q'_{10}=p'_{10}$ but the remaining two equations can be solved for any values of $q'_{11},q'_{12}$. Hence in this case, isomorphism imposes the following equivalence relation
\[
[0:0:1:p_{12}](p'_{1-1},p'_{10},p'_{11},p'_{12}) \sim [0:0:1:p_{12}](p'_{1-1},p'_{10},*\,,*)
\]
with the fibre of $\pi$ over such a point being a copy of $\mathbb C^2$ parametrized by the values of $(p'_{1-1}, p'_{10})$.
Thus, setting 
$$
S_2 := \{P \in \mathfrak M_3 (Z_1) \mid p_{1-1} = p_{10}=0, \, p_{11} \neq 0 \}
$$
we have that if $P \in S_2$ then $\pi^{-1} (P) \simeq \mathbb C^2$.
\item Lastly there would be the point $[0:0:0:1]$ to be considered, but it does not belong to $\mathfrak M_3 (Z_1)$ because it corresponds to a vector bundle with charge $5$ (see Def.~\ref{charge} and Table \ref{table}), so we are already done. 
\end{itemize}
In conclusion,
\begin{equation*}
\begin{tikzpicture}[baseline=-.4em]
\matrix (m) [matrix of math nodes, row sep=1.5em, inner sep=2pt,
column sep=0, ampersand replacement=\&]
{
\mathfrak M_3^{(1)} (\mathcal Z_1 (\sigma)) \\
\mathfrak M_3 (Z_1) \\
};
\path[-stealth,line width=.6pt,font=\scriptsize]
(m-1-1) edge node[pos=.45,left=-.3ex] {$\pi$} (m-2-1)
;
\end{tikzpicture}
\end{equation*}
can be viewed as the étale space of a constructible sheaf, with stalks of dimension $i$ over the strata $S_i$ for $i=0,1,2$.
\end{example}

\begin{example}[$\sigma_U$ multiple of $u$, $k$ and $j$ arbitrary]
\label{ujarb}
Let $\sigma$ be a holomorphic Poisson structure on $Z_k$ such that $\sigma_U$ is a multiple of $u$ and let $\mathcal Z_k (\sigma)$ be any deformation quantization of $Z_k$. (Note that by Lem.~\ref{poissonstructures} any Poisson structure on $Z_{\geq 3}$ is of this form.)

Then the moduli space $\mathfrak M_j^{(1)} (Z_k{(\sigma)})$ may be described by the Toeplitz system: 
{\small
\[
\left(
\begin{tikzpicture}[x=4.7em,y=2.75em,baseline=-8.5em]
\node[inner sep=0pt,shape=rectangle,baseline=(11.base)] (11) at (1,-1) {$p'_{1,j-1} - q'_{1,j-1}$\strut};
\node[inner sep=1pt,shape=rectangle,baseline=(11.base)] (21) at (1,-2) {$p'_{1,j-2} - q'_{1,j-2}$\strut};
\node[inner sep=1pt,shape=rectangle,baseline=(11.base)] (41) at (1,-4) {$p'_{1,k-j+2} {-} q'_{1,k-j+2}$\strut};
\node[inner sep=0pt,shape=rectangle,baseline=(11.base)] (51) at (1,-5) {$p'_{1,k-j+1} {-} q'_{1,k-j+1}$\strut};
\path[dash pattern=on 0pt off 5pt, line width=1pt, line cap=round] (21) edge (41);
\end{tikzpicture}
\right)
=
\left(
\begin{tikzpicture}[x=4.1em,y=2.75em,baseline=-8.5em]
\node[inner sep=0pt,shape=rectangle,baseline=(11.base)] (11) at (1,-1) {$p_{1,k-j+1}$\strut};
\node[inner sep=0pt,shape=rectangle,baseline=(22.base)] (22) at (2,-2) {$p_{1,k-j+1}$\strut};
\node[inner sep=1pt,shape=rectangle,baseline=(33.base)] (33) at (3,-3) {$p_{1,k-j+1}$\strut};
\node[inner sep=0pt,shape=rectangle,baseline=(55.base)] (55) at (5,-5) {$p_{1,k-j+1}$\strut};
\node[inner sep=0pt,shape=rectangle,baseline=(21.base)] (12) at (2,-1) {$p_{1,k-j+2}$\strut};
\node[inner sep=4pt,shape=rectangle,baseline=(44.base)] (13) at (3,-1) {$p_{1,k-j+3}$\strut};
\node[inner sep=0pt,shape=rectangle,baseline=(44.base)] (23) at (3,-2) {$p_{1,k-j+2}$\strut};
\node[inner sep=0pt,shape=rectangle,baseline=(44.base)] (45) at (5,-4) {$p_{1,k-j+2}$\strut};
\node[inner sep=0pt,shape=rectangle,baseline=(44.base)] (35) at (5,-3) {$p_{1,k-j+3}$\strut};
\node[inner sep=0pt,shape=rectangle,baseline=(44.base)] (15) at (5,-1) {$p_{1,j-1}$\strut};
\node[inner sep=.5pt,shape=circle,baseline=(11.base)] (21) at (1,-2) {$0$\strut};
\node[inner sep=.5pt,shape=circle,baseline=(33.base)] (31) at (1,-3) {$0$\strut};
\node[inner sep=.5pt,shape=circle,baseline=(33.base)] (32) at (2,-3) {$0$\strut};
\node[inner sep=.5pt,shape=circle,baseline=(55.base)] (51) at (1,-5) {$0$\strut};
\node[inner sep=.5pt,shape=circle,baseline=(55.base)] (53) at (3,-5) {$0$\strut};
\node[inner sep=.5pt,shape=circle,baseline=(55.base)] (54) at (4,-5) {$0$\strut};
\path[dash pattern=on 0pt off 5pt, line width=1pt, line cap=round] (33) edge (55);
\path[dash pattern=on 0pt off 5pt, line width=1pt, line cap=round] (23) edge (45);
\path[dash pattern=on 0pt off 5pt, line width=1pt, line cap=round] (13) edge (35);
\path[dash pattern=on 0pt off 5pt, line width=1pt, line cap=round] (13) edge (15);
\path[dash pattern=on 0pt off 5pt, line width=1pt, line cap=round] (15) edge (35);
\path[dash pattern=on 0pt off 5pt, line width=1pt, line cap=round] (32) edge (54);
\path[dash pattern=on 0pt off 5pt, line width=1pt, line cap=round] (31) edge (53);
\path[dash pattern=on 0pt off 5pt, line width=1pt, line cap=round] (31) edge (51);
\path[dash pattern=on 0pt off 5pt, line width=1pt, line cap=round] (51) edge (53);
\end{tikzpicture}
\right)
\left(
\begin{tikzpicture}[x=4.7em,y=2.75em,baseline=-8.5em]
\node[inner sep=0pt,shape=rectangle,baseline=(11.base)] (11) at (1,-1) {$a'_{0,2j-k-2} {-} d'_{0,2j-k-2}$\strut};
\node[inner sep=1pt,shape=rectangle,baseline=(11.base)] (21) at (1,-2) {$a'_{0,2j-k-3} {-} d'_{0,2j-k-3}$\strut};
\node[inner sep=1pt,shape=rectangle,baseline=(11.base)] (41) at (1,-4) {$a'_{0,1} - d'_{0,1}$\strut};
\node[inner sep=0pt,shape=rectangle,baseline=(11.base)] (51) at (1,-5) {$a'_{0,0} - d'_{0,0}$\strut};
\path[dash pattern=on 0pt off 5pt, line width=1pt, line cap=round] (21) edge (41);
\end{tikzpicture}
\right)
\]
}
\end{example}

Exs.~\ref{j3u} and \ref{ujarb} readily generalize to give:

\begin{theorem}
\label{noniso}
The quantum moduli space $\mathfrak M_j^{(1)} (\mathcal Z_k (\sigma))$ can be viewed as the étale space of a constructible sheaf over the classical moduli space $\mathfrak M_j (Z_k)$, the sheaf being trivial over the open set
$$
S_0:= \{P \in \mathfrak M_j (Z_k) \mid p_{1,k-j+1}  \neq 0 \}
$$
and with stalk of dimension $i$ over the locally closed subvarieties
$$
S_i := \{P \in \mathfrak M_j (Z_k) \mid p_{1,k-j+1} = \cdots = p_{1,k-j+i} = 0,   \, p_{1,k-j+1+i} \neq 0 \}.
$$

\end{theorem}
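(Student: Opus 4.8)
The plan is to show that Exs.~\ref{j3u} and~\ref{ujarb} already contain the whole mechanism, so that only a uniform piece of linear algebra and a routine constructibility argument remain. By Not.~\ref{minimally}, the Poisson structures $\sigma$ in the statement are exactly those with $\ell \subset \Dgn(\sigma)$, equivalently those whose coefficient function $\sigma_U$ is divisible by $u$ in canonical $U$-coordinates; by Lem.~\ref{poissonstructures} every Poisson structure on $Z_{\geq 3}$ is of this form, and on $Z_1, Z_2$ these are the structures denoted $\sigma$ (rather than $\sigma_0$). For such $\sigma$, Ex.~\ref{ujarb} reduces the isomorphism problem on $\ell^{(1)}$, to first order in $\hbar$, for two bundles with the same classical limit $P \in \mathfrak M_j(Z_k)$ to the displayed Toeplitz system: the change-of-coordinates data acts on the first-order part $p'$ by $p' \mapsto p' + T(P)\,v$, where $v$ ranges over $\mathbb{C}^{2j-k-1}$ and $T(P)$ is the $(2j-k-1)\times(2j-k-1)$ upper-triangular Toeplitz matrix whose $m$-th superdiagonal ($m \geq 0$) is the constant $p_{1,k-j+1+m}$.

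First I would record, exactly as in the classical computation of~\cite[\S3.1]{gasparim} reproduced in Exs.~\ref{p1}, \ref{p13} and~\ref{j3u}, that in~(\ref{12term}) the coefficients of the pure powers $z, z^2, \dots, z^{2j-1}$ vanish automatically, that the $(1,1)$ and $(2,2)$ entries of~(\ref{simplified}) are cleared by a suitable choice of $c$ and $d'$, and that $z^{2j}b'$ absorbs every monomial of $z$-degree $\geq 2j$; hence the only surviving constraints are the coefficients of $z^{k+1}u, \dots, z^{2j-1}u$, which are precisely the rows of the Toeplitz system, and none of the auxiliary freedom has been spent against the fibre direction. This identifies the fibre of $\pi \colon \mathfrak M_j^{(1)}(\mathcal Z_k(\sigma)) \to \mathfrak M_j(Z_k)$ over $P$ with $\coker T(P)$, so that $\dim \pi^{-1}(P) = (2j-k-1) - \rank T(P)$.

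The linear-algebra step is then immediate: if $P \in S_i$, so $p_{1,k-j+1} = \dots = p_{1,k-j+i} = 0$ and $p_{1,k-j+1+i} \neq 0$, one factors $T(P) = J^{\,i} G$, where $J$ is the nilpotent shift of size $2j-k-1$ and $G$ is the upper-triangular Toeplitz matrix with diagonal entry $p_{1,k-j+1+i} \neq 0$, hence invertible. Thus $\rank T(P) = \rank J^{\,i} = (2j-k-1) - i$ and $\dim \pi^{-1}(P) = i$; in particular $T(P)$ is invertible over $S_0$, so $\pi$ restricts to an isomorphism over the open set $S_0$. It remains to note that each $S_i$ is locally closed in $\mathfrak M_j(Z_k)$, being defined by the vanishing of $p_{1,k-j+1}, \dots, p_{1,k-j+i}$ and the non-vanishing of $p_{1,k-j+1+i}$, and that --- since every $P \in \mathfrak M_j(Z_k) \subset \mathbb{P}^{2j-k-2}$ has a well-defined least index with non-vanishing coordinate --- the $S_i$ with $0 \leq i \leq 2j-k-2$ partition $\mathfrak M_j(Z_k)$ into finitely many locally closed subvarieties, over each of which $\pi$ has constant fibre $\mathbb{C}^{i}$. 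This is the asserted structure of a constructible sheaf over $\mathfrak M_j(Z_k)$ with stalk of dimension $i$ on $S_i$ and trivial on $S_0$.

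The step I expect to be the real obstacle --- and the one the preceding examples exist to license --- is the decoupling claimed in the second paragraph: one must check that the Poisson-bracket terms in~(\ref{12term}), which for $\sigma_U$ a multiple of $u$ never lower the exponent of $u$ (cf.\ the proof of Prop.~\ref{extends} and Thm.~\ref{ncextformal}), modify the entries of $T(P)$ only by $P$-dependent rescalings and never couple the $u$-monomials to the pure $z$-monomials, nor produce obstructions beyond those already visible in $T(P)$. Granting this, verified in Exs.~\ref{p1}--\ref{ujarb}, the remaining steps are elementary.
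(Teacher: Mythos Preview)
Your proposal is correct and follows essentially the same approach as the paper: the paper's proof is little more than the sentence ``Exs.~\ref{j3u} and~\ref{ujarb} readily generalize'' followed by a restatement of the stratification, so your argument is precisely the paper's, only with the linear algebra made explicit (your factorization $T(P) = J^i G$ is a clean way to compute the Toeplitz rank that the paper leaves to case-by-case inspection in Ex.~\ref{j3u}). The decoupling concern you flag in your final paragraph is real and is exactly what Ex.~\ref{ujarb} is asserted to settle; the paper does not argue it any more carefully than you do.
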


\begin{proof} The classical moduli space gets stratified into disjoint subsets $S_i$ over which the fibre of the projection 
\begin{equation*}
\begin{tikzpicture}[baseline=-.4em]
\matrix (m) [matrix of math nodes, row sep=1.5em, inner sep=2pt,
column sep=0, ampersand replacement=\&]
{
\mathfrak M_j^{(1)} (\mathcal Z_k (\sigma)) \\
\mathfrak M_j (Z_k) \\
};
\path[-stealth,line width=.6pt,font=\scriptsize]
(m-1-1) edge node[pos=.45,left=-.3ex] {$\pi$} (m-2-1)
;
\end{tikzpicture}
\end{equation*}
has dimension $i$. 
The strata are the dense open subset
$$
S_0:= \{P \in \mathfrak M_j (Z_k) \mid p_{1,k-j+1}  \neq 0 \}
$$
and locally closed subsets of decreasing dimension
\begin{flalign*}
&& S_i := \{P \in \mathfrak M_j (Z_k) \mid p_{1,k-j+1} = \cdots = p_{1,k-j+i} = 0,   \, p_{1,k-j+1+i} \neq 0 \}. && \qedhere
\end{flalign*}
\end{proof}

\begin{remark}[Higher powers of $\hbar$]
\label{higherhbar}
Repeating the calculation for higher powers of $\hbar$, one finds that $\mathfrak M_2^{(2)} (\mathcal Z_1 (\sigma)) \simeq \mathfrak M_2^{(1)} (\mathcal Z_1 (\sigma))$
so that for splitting type $2$, considering terms up to $\hbar^2$ does not give any more vector bundles. We thus expect that for arbitrary powers of $\hbar$ one obtains an isomorphism $\mathfrak M_2 (\mathcal Z_1 (\sigma)) \simeq \mathfrak M_2^{(1)} (\mathcal Z_1 (\sigma))$ with all splitting type $2$ vector bundles supported on the first neighbourhood of $\hbar$. In particular, this implies that bundles of splitting type $2$ on $\mathcal Z_1 (\sigma)$ are {\it algebraic} in the sense of Def.~\ref{formally}.

Similarly, Thm.~\ref{iso} would imply
\[
\mathfrak M_j (\mathcal Z_1 (\sigma_0)) \simeq \mathfrak M_j (Z_1)
\]
for the minimally degenerate Poisson structure $\sigma_0$ and arbitrary $j$, which corresponds to the statement that bundles of arbitrary splitting type on $\mathcal Z_1 (\sigma_0)$ are algebraic.

On the other hand, the calculation of $\mathfrak M_2^{(2)} (\mathcal Z_1 (\sigma))$ also suggests that for $j \geq 3$ and Poisson structures which are degenerate on all of $\ell$, for example any Poisson structure on $Z_{k \geq 3}$, the moduli space $\mathfrak M_j (\mathcal Z_k (\sigma))$ should also contain bundles whose presentation requires nontrivial coefficients of $\hbar^n$ for $n > 1$. In particular, this implies that the moduli space $\mathfrak M_j^{(n)} (\mathcal Z_k (\sigma))$ is larger than $\mathfrak M_j^{(1)} (\mathcal Z_k (\sigma))$.

The calculation of $\mathfrak M_2^{(2)} (\mathcal Z_1 (\sigma))$ is analogous to the calculations of Exs.~\ref{p1} and \ref{m2u} and can be reproduced by using the Kontsevich star product extended to $Z_k$ as given in Prop.~\ref{extends}, whose terms up to second order can be obtained from Lem.~\ref{secondorder}. However, the calculation is considerably longer and we do not include it here, leaving a more general investigation for future work.
\end{remark}

\section{Classical instantons}
\label{classicalinstantons}

By definition an {\it instanton} on a 4-manifold
$X$ is a connection $A$ minimizing the Yang--Mills functional 
$$
\mathrm{YM}(A) = \int_X \tr F \wedge F
$$ 
where $F$ is the curvature of $A$. The Euler--Lagrange equations for the 
Yang--Mills functional 
\begin{equation}
\label{ym}
D (\ast F) = 0
\end{equation}
are called the {\it Yang--Mills equations} (here $\ast$ denotes Hodge dual).
Thus an instanton is a solution of the Yang--Mills equations. 
A linearized version of these equations is given by the anti-self-duality (ASD) equations
\begin{equation}
\label{asd}
F^+ = \frac12(F + \ast F) = 0.
\end{equation} 
Equation (\ref{asd}) is sometimes called the {\it instanton equation} because its solutions also satisfy (\ref{ym}).

The translation from gauge  theory to complex geometry is made via the so-called 
Kobayashi--Hitchin correspondence, which relates instantons and vector bundles as well 
as their moduli spaces. If $X$ is a complex K\"ahler surface and $E$ is an 
$\SU(2)$ bundle on $X$, then the moduli space $M_E$ of irreducible ASD connections on $E$ is a complex analytic space and each point in $M_E$ has a neighbourhood which is the base of a universal deformation of the corresponding stable vector bundle 
\cite[Prop.\ 6.4.4]{DK}.
A version of such a correspondence for noncompact surfaces states:

\begin{lemma}\label{nk}\cite[Cor.\ 5.5]{GKM}
A holomorphic $\SL (2, \mathbb C)$ vector bundle on $Z_k$ corresponds to an $\SU(2)$ instanton if and only if its splitting type is a multiple of $k$. 
 \end{lemma}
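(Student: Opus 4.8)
The plan is to deduce Lemma~\ref{nk} from the Kobayashi--Hitchin correspondence for the noncompact surfaces $Z_k$, which identifies $\SU(2)$ instantons on $Z_k$ with (framed) stable holomorphic $\SL(2,\mathbb{C})$ bundles on $Z_k$ --- for $k = 1$ this is King \cite{Ki}, and in general \cite{GKM}. Under this correspondence the finite-action condition makes the curvature decay, so an instanton restricts near the end of $Z_k$ to a flat $\SU(2)$ connection on the boundary three-manifold. Since $Z_k \setminus \ell \simeq (\mathbb{C}^2 \setminus \{0\})/\Gamma$ with $\Gamma$ cyclic of order $k$ deformation retracts onto the lens space $S^3/\Gamma$, this asymptotic flat connection is a representation $\rho \colon \Gamma \to \SU(2)$, and ``$\SU(2)$ instanton'' in the sense of \cite{GKM} means that $\rho$ is trivial (this is the framing at infinity). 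So the lemma reduces to the claim that $\rho$ is trivial precisely when the splitting type $j$ is a multiple of $k$.

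First I would pin down the dictionary between the holomorphic data at infinity and $\rho$. Using filtrability (Theorem~\ref{filtrable}), write $E$ as an extension $0 \to \mathcal{O}(-j) \to E \to \mathcal{O}(j) \to 0$. A short transition-function computation shows $\Pic (Z_k \setminus \ell) \simeq \mathbb{Z}/k$, the class group of the $\tfrac1k(1,1)$ singularity, with $\mathcal{O}(n)$ restricting to $n \bmod k$ --- equivalently to the flat line bundle whose monodromy sends a generator of $\Gamma$ to $e^{2\pi i n/k}$; indeed $\mathcal{O}(k)\vert_{Z_k \setminus \ell}$ is trivialized by $u^{-1}$ on $U \setminus \ell$ and $v^{-1}$ on $V \setminus \ell$, while for $0 < n < k$ no such unit exists. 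Restricting the filtration of $E$ to the end and passing to the associated flat connection, the semisimplification of $\rho$ is $\mathrm{diag}(e^{2\pi i j/k}, e^{-2\pi i j/k})$; as $\Gamma$ is abelian, $\rho$ itself is conjugate to this, so $\rho$ is trivial iff $k \mid j$. This gives the forward implication. For the converse I would check that when $k \mid j$ the stable locus with splitting type $j$ is nonempty and the Hermitian--Einstein connection attached to such a bundle is trivial at infinity, so that the correspondence of \cite{GKM} produces an honest instanton.

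The main obstacle is the analytic input underlying the correspondence itself: existence, uniqueness, and --- crucially --- the precise asymptotics of Hermitian--Einstein metrics on the noncompact $Z_k$, together with the identification of the asymptotic holonomy with the flat bundle determined by the holomorphic structure at infinity. This is essentially classical for $k = 1$ (where $Z_1 \setminus \ell = \mathbb{C}^2 \setminus \{0\}$ and every splitting type occurs, consistent with the statement) and is covered by ALE-instanton theory for $k = 2$, but for $k \geq 3$ the surface $Z_k$ is not ALE (\cite[Thm.~1.2]{Kr}), so one cannot invoke Kronheimer--Nakajima directly --- this is exactly the gap filled by \cite{GKM}. An alternative, purely algebraic route avoids the metric analysis by descending along $Z_k \to X_k \simeq \mathbb{C}^2/\Gamma$ and realizing the instanton as a $\Gamma$-equivariant torsion-free sheaf on $\mathbb{P}^2$ framed along the line at infinity; there the condition that $\Gamma$ act trivially on the framing translates transparently into $k \mid j$ by a weight count, recovering the statement.
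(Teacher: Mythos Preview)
The paper does not supply its own proof of this lemma: it is quoted as \cite[Cor.~5.5]{GKM} and used as a black box, so there is nothing in-paper to compare your argument against.

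That said, your sketch is a reasonable reconstruction of the strategy behind the cited result. The computation $\Pic(Z_k \setminus \ell) \simeq \mathbb Z/k$ with $\mathcal O(n) \mapsto n \bmod k$ is correct, and your trivialization of $\mathcal O(k)\vert_{Z_k \setminus \ell}$ by $(u^{-1}, v^{-1})$ is exactly the right one. You have also correctly isolated the genuine analytic input --- existence and asymptotics of Hermitian--Einstein metrics on the non-ALE surfaces $Z_{\geq 3}$ --- and correctly attributed it to \cite{GKM}. Two minor cautions. First, your converse paragraph is slightly circular as phrased: in this paper ``stable'' is \emph{defined} to mean holomorphically trivial (and framed) on $Z_k \setminus \ell$, so saying ``the stable locus is nonempty and the HE connection is trivial at infinity'' assumes part of what you want; what actually needs checking is that for $k \mid j$ there exist bundles of splitting type $j$ which are trivial off $\ell$ (this is elementary from the transition-matrix description) and that the correspondence of \cite{GKM} then applies to them. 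Second, note that $Z_k \setminus \ell \simeq (\mathbb C^2 \setminus \{0\})/\Gamma$ is \emph{not} affine or Stein, so you cannot simply split the extension there by a vanishing-of-$\H^1$ argument; the triviality of $E\vert_{Z_k \setminus \ell}$ has to come from the specific form of the extension class, not from general cohomological vanishing.
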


The surfaces $Z_k$ have rich moduli spaces of instantons, which unfortunately disappear under any small commutative 
deformation of $Z_k$. In fact, for $j \equiv 0 \mod k$, we have:

\begin{theorem}\cite[Thm.\ 4.11]{BGK2}
\label{existence}
The moduli space of irreducible $\SU (2)$ instantons on $Z_k$ with charge (and splitting type) $j$ is a quasi-projective variety of dimension $2j-k-2$. 
\end{theorem}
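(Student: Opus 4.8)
The plan is to derive this from the Kobayashi--Hitchin correspondence for $Z_k$ together with the description of moduli of stable holomorphic bundles already quoted in Thm.~\ref{exist}. First I would invoke Lem.~\ref{nk}: an irreducible $\SU(2)$ instanton on $Z_k$ of charge $j$ corresponds to a stable holomorphic $\SL(2,\mathbb C)$ bundle whose splitting type is a multiple of $k$, and conversely every stable holomorphic $\SL(2,\mathbb C)$ bundle of splitting type $j\equiv 0\bmod k$ comes from such an instanton. The Kobayashi--Hitchin correspondence for the (noncompact) surfaces $Z_k$ --- proven by King \cite{Ki} for $k=1$ and in \cite{GKM} for $k\geq 2$ --- upgrades this bijection to an isomorphism between the moduli space $M_E$ of irreducible ASD connections of charge $j$ and the moduli space $\mathfrak M_j(Z_k)$ of stable holomorphic bundles of splitting type $j$ (Not.~\ref{modulic}), the charge on the gauge side matching the invariant $j$ on the complex side.

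Second, I would record that $\mathfrak M_j(Z_k)$ is genuinely an algebraic object: by Thm.~\ref{filtrable} every holomorphic bundle on $Z_k$ is algebraic, so by Rem.~\ref{construct} the space $\mathfrak M_j(Z_k)$ is cut out of the finite-dimensional variety $\Ext^1(\mathcal O(j),\mathcal O(-j))$ by the algebraic stability condition of \cite[Def.~5.2]{GKM} together with the action of bundle automorphisms, and is therefore quasi-projective. Thus the instanton moduli space inherits, via the correspondence above, the structure of a quasi-projective variety. Finally I would quote \cite[Thm.~4.11]{BGK2} --- which is exactly Thm.~\ref{exist} --- to conclude that $\mathfrak M_j(Z_k)$ is smooth of dimension $2j-k-2$; the dimension can also be read off from the fact that $\mathfrak M_j(Z_k)$ contains a dense open subset isomorphic to $\mathbb P^{2j-k-2}$ minus a subvariety of codimension at least $k+1$.

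The substantive point --- and the only real obstacle --- is the comparison of moduli spaces in the noncompact setting: the standard Kobayashi--Hitchin package \cite[Prop.~6.4.4]{DK} is phrased for compact K\"ahler surfaces, so one must rely on the noncompact analogue of \cite{GKM}, together with its framing and stability conventions, to be sure that the correspondence is a bijection which respects the charge and matches the complex-analytic structure of $M_E$ with that of $\mathfrak M_j(Z_k)$. Once this identification is granted, no further analysis is needed: the dimension formula $2j-k-2$ is entirely the content of \cite{BGK2}.
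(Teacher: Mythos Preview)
The paper does not supply its own proof of this statement: Thm.~\ref{existence} is quoted verbatim as \cite[Thm.~4.11]{BGK2}, just as Thm.~\ref{exist} is, and no argument is given in the text beyond the surrounding discussion that the Kobayashi--Hitchin correspondence (King \cite{Ki} for $k=1$, \cite{GKM} for $k\geq 2$, with Lem.~\ref{nk} controlling which splitting types arise) identifies the instanton moduli with the bundle moduli $\mathfrak M_j(Z_k)$. Your sketch is therefore not so much an alternative to the paper's proof as an explicit unpacking of what the citation encodes: translate instantons to stable $\SL(2,\mathbb C)$ bundles via the noncompact Kobayashi--Hitchin correspondence, then invoke \cite[Thm.~4.11]{BGK2} for the quasi-projectivity and the dimension count. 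That is exactly the logic the paper relies on implicitly, so your proposal is correct and matches the paper's approach.

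One small caution: your argument that $\mathfrak M_j(Z_k)$ is quasi-projective because it is ``cut out of $\Ext^1(\mathcal O(j),\mathcal O(-j))$ by an algebraic stability condition and the action of bundle automorphisms'' is a little glib --- quotients by group actions need not be quasi-projective, and this is precisely the content one is importing from \cite{BGK2} rather than re-proving. Since you ultimately cite \cite[Thm.~4.11]{BGK2} anyway, it is cleaner to take quasi-projectivity directly from there rather than sketch a separate justification.
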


In contrast:

\begin{theorem}\cite[Thm.\ 7.3]{BrG}
\label{empty}
Let $\mathcal Z_k (\tau)$ be a nontrivial commutative deformation of $Z_k$. Then the moduli spaces of irreducible $\SU (2)$ instantons on $\mathcal Z_k (\tau)$ are empty.
\end{theorem}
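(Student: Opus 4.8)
The plan is to show that a nontrivial commutative deformation of $Z_k$ is a Stein surface containing no compact curve, and then to read off emptiness of the instanton moduli from Grauert's Oka principle together with the Kobayashi--Hitchin correspondence. First I would recall the deformation theory: since $\H^2 (Z_k, \mathcal O) = 0$, a nontrivial \emph{commutative} deformation of $\mathcal O_{Z_k}$ is the same as a nontrivial deformation of the complex structure of $Z_k$, governed in first order by $\H^1 (Z_k, \mathcal T_{Z_k})$. This group vanishes for $k = 1$, so the statement is vacuous there; for $k \geq 2$ one checks that, up to a change of coordinates and higher-order corrections, every such deformation is presented in the canonical charts of Not.~\ref{coord} by a re-gluing $(\xi, v) = (z^{-1}, z^k u + t\, \theta (z))$ with $\theta$ a nonzero polynomial of degree $< k$. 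The resulting surface $\mathcal Z_k (\tau)$ is again smooth, since the re-gluing is a biholomorphism of $U \cap V \simeq \mathbb C^* \times \mathbb C$.

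The heart of the argument is to prove that for $t \neq 0$ the surface $\mathcal Z_k (\tau)$ is affine. It contains no compact curve: the zero section $\ell \simeq \mathbb P^1$ is the only irreducible compact curve of $Z_k$ --- a $d$-fold multisection would pull $\mathcal O_{\mathbb P^1} (-k)$ back to a line bundle of negative degree $-dk$ on a curve over $\mathbb P^1$, which has no nonzero section --- and since $\mathcal O_{\mathbb P^1} (-k)$ admits no nontrivial deformation, a deformation preserving $\ell$ with its $\mathbb P^1$-structure preserves its normal bundle and must be trivial; a nontrivial deformation therefore destroys $\ell$. One then computes the ring $A = \H^0 (\mathcal Z_k (\tau), \mathcal O)$ of global functions and verifies that the canonical morphism $\mathcal Z_k (\tau) \to \Spec A$ is an isomorphism: for $k = 2$ one obtains $A = \mathbb C [X, Y, W] / (YW - X^2 - tX)$, the coordinate ring of a smooth affine quadric, and the general case is analogous. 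In particular $\mathcal Z_k (\tau)$ is Stein and carries a K\"ahler metric.

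Granting affineness, the conclusion follows quickly. Understood via the Kobayashi--Hitchin correspondence --- the proof given for $Z_k$ in \cite{GKM} applies to $\mathcal Z_k (\tau)$, which is again a smooth quasi-projective surface with the same end --- an irreducible $\SU (2)$-instanton on $\mathcal Z_k (\tau)$ would furnish a stable holomorphic $\SL (2, \mathbb C)$-bundle on $\mathcal Z_k (\tau)$, in particular one that is simple and not a direct sum of line bundles. But on a Stein manifold every holomorphic vector bundle is holomorphically trivial as soon as it is topologically trivial (Grauert), and every topological $\SU (2)$-bundle on $\mathcal Z_k (\tau)$ is trivial, since $\SU (2) \simeq S^3$ is $2$-connected while by Andreotti--Frankel the Stein surface $\mathcal Z_k (\tau)$ has the homotopy type of a CW complex of real dimension $\leq 2$. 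Hence every holomorphic $\SL (2, \mathbb C)$-bundle on $\mathcal Z_k (\tau)$ is trivial, there are no stable ones, the only connection is the reducible trivial one, and the moduli spaces of irreducible instantons are empty. (Affineness also forces $\H^1 (\mathcal Z_k (\tau), \mathcal L) = 0$ for every line bundle $\mathcal L$, so the filtrable-bundle analysis collapses as well.) The main obstacle is the affineness step: making $\H^1 (Z_k, \mathcal T_{Z_k})$ and the deformations explicit, checking unobstructedness, and showing that \emph{every} nontrivial deformation --- not only the generic one --- yields an affine surface. A subsidiary point is to transfer the Kobayashi--Hitchin dictionary and the finite-energy instanton setup from $Z_k$ to $\mathcal Z_k (\tau)$, and to observe that the formal commutative deformations over $\mathbb C \llrr{t}$ used elsewhere in the paper arise from this algebraic family, so the analytic argument applies.
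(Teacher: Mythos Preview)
The paper does not give its own proof of this statement: Thm.~\ref{empty} is quoted from \cite[Thm.~7.3]{BrG}, and the surrounding text only sketches the geometry behind it, namely that nontrivial commutative deformations $\mathcal Z_k(\tau)$ are affine \cite[Thm.~6.18]{BrG}, contain no compact curves \cite[Thm.~6.6]{BrG}, and that consequently every holomorphic vector bundle on them splits as a direct sum of line bundles \cite[Thm.~6.10]{BrG}. Your plan follows exactly this architecture --- establish affineness via an explicit description of the deformations, deduce the absence of compact curves, and conclude emptiness of the instanton moduli --- so it is in full agreement with the argument the paper invokes.

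The one genuine variation is in the last step: the cited source argues that bundles on $\mathcal Z_k(\tau)$ split as sums of line bundles (extending the filtrability machinery), whereas you bypass this by combining Grauert's Oka principle with Andreotti--Frankel to show directly that every $\SL(2,\mathbb C)$-bundle on the Stein surface is holomorphically trivial. Your route is cleaner for the specific goal of ruling out irreducible $\SU(2)$ instantons, since it avoids the bundle-theoretic analysis entirely; the splitting result in \cite{BrG} is a stronger structural statement but is not needed here. Either way the substantive work is the affineness of $\mathcal Z_k(\tau)$, which you correctly identify as the main obstacle.
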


Disappearance of instantons under classical deformations gave us a strong  motivation to explore noncommutative directions of deformations. From the point of view of algebraic deformation theory, both classical and noncommutative deformations can be regarded on equal footing --- as components in the Hochschild--Kostant--Rosenberg decomposition of the Hochschild cohomology
\[
\HH^2 (Z_k) \simeq \H^1 (Z_k, \mathcal T_{Z_k}) \oplus \H^0 (Z_k, \Wedge^2 \mathcal T_{Z_k})
\]
which parametrizes deformations of the Abelian of category of (quasi)co\-he\-rent sheaves in the sense of \cite{lowenvandenbergh}. (Note that simultaneous deformations of $Z_k$ in commutative and noncommutative directions may be obstructed. The obstruction calculus was studied in \cite{barmeierfregier}.)
However, from a physics point of view we hoped, and intuitively expected, that moduli of instantons appear again on directions of noncommutative deformations. We will see that this is indeed the case.
Furthermore, we discover that some instantons react wildly to certain types of quantization, causing quantum  moduli spaces to become larger than the classical ones. 

The geometry underlying the disappearance of instantons on commutative deformations 
is the fact that if $\mathcal Z_k (\tau)$ is any nontrivial commutative deformation of $Z_k$, then every holomorphic vector bundle on $\mathcal Z_k (\tau)$ splits as a direct sum of line bundles \cite[Thm.\ 6.10]{BrG}.
The key issue here is that for $\tau\neq 0$ the deformations $\mathcal Z_k (\tau)$ are affine \cite[Thm.\ 6.18]{BrG}. In particular, by \cite[Thm.\ 6.6]{BrG} such a nontrivial deformation contains no compact complex curves. In physics language, we may say that there is no compact manifold which can hold the instanton charge. 
 
In Yang--Mills theory instantons are well known to carry topological charges. Under the Kobayashi--Hitchin correspondence the charge of an $\SU (2)$ instanton on a complex surface translates into the second Chern class of its corresponding $\SL (2, \mathbb C)$ holomorphic vector bundle. Here we use instead the concept of {\it local charge}, given that second Chern class of a bundle on $Z_k$ vanishes, since $\H^4 (Z_k, \mathbb Z) \simeq \H^4 (S^2, \mathbb Z) = 0$. The terminology ``local charge'' is motivated by the fact that it provides a local contribution to the second Chern class when we consider a compact surface containing an embedded $Z_k$.

\begin{definition}\label{inscharge}
We define the {\it normalized charge} of an instanton on $Z_k$ to be the sum of the local second Chern class (Def.~\ref{charge}) of the bundle to which it corresponds and $\epsilon$, where $\epsilon = 1$ if $k \geq 2$ and $0$ when $k = 1$. In what follows we will simply refer to this normalized charge as the {\it charge} of the instanton for brevity. 
\end{definition}

\begin{remark}
The reason for this normalization is that, once normalized, the minimal charge of an $\SU (2)$ instanton with splitting type $j$ equals $j$, and this allows us to express the theorems that follow in a much simpler way. We observe that the addition of $\epsilon = 1$ in the cases $k \geq 2$ corresponds to the fact that the surface $X_k$ obtained by contracting the $\mathbb P^1$ to a point has one singularity. In case we considered more general surfaces containing other contractible curves, the correct normalization should most likely be to count all singularities obtained by their contraction. For bounds on the values of instanton charges see \cite{BGK2} and \cite{GKM}.

We note also that there exist more than one notion of Chern classes for sheaves on singular varieties. A particularly useful one, presented by Blache \cite{Bl}, is the concept of orbifold Chern class, defined using the familiar integration formula $\int_X \mathrm{ch} (E) \, \mathrm{td} (X)$ appearing in Hirzebruch--Riemann--Roch formulas, but which in the case of singular varieties differs from the Euler characteristic by 
a weighted counting of singularities. In fact, Blache proves the following formula:
$$
\chi(X, E) = \int_X \mathrm{ch} (E) \,\mathrm{td} (X) + \sum_{x \in \operatorname{Sing} (X)} \mu_{x,X} (E).
$$
Here our $\epsilon$ may be regarded as keeping track of when the defect term $\mu$ is nonzero. 
\end{remark}

As an illustration we give instanton invariants from \cite{BG} in Table \ref{table}.
\begin{table}[ht]	
\begin{center}
\begingroup
\renewcommand*{\arraystretch}{1.15}
\begin{tabular}{c||c|c|c}
monomial  & \eqmakebox[table]{width} & \eqmakebox[table]{height} & \eqmakebox[table]{charge} \\
\hline
$z^{-1}u$ & 3 & 2 & 5\\
$u$ & 1 & 2 & 3\\
$zu$ & 1 & 2 & 3\\
$z^{2}u$ & 3 & 2 & 5\\
\hline
$u^2$ & 3 & 3 & 6\\
$zu^2$ & 2 & 3 & 5\\
$z^{2}u^2$ & 3 & 3 & 6\\
\hline
$zu^3$ & 5 & 3 & 7\\
$z^{2}u^3$ & 5 & 3 & 7\\
\hline
$z^{2}u^4$ & 5 & 3 & 8\\
\hline
zero & 6 &3 &  9\\
\end{tabular}
\endgroup
\vspace{2mm}
\end{center}
\caption{Numerical invariants for rank $2$ vector bundles of splitting type $j = 3$ on $Z_1$}
\label{table}
\end{table}

\section{Rebel instantons}
\label{instantons}

We wish to investigate the effect that noncommutative deformations have on instantons --- these effects can already be observed at first order in $\hbar$. We thus reinterpret of the results of \S\ref{vectorbundles} into the language of instantons.

Noncommutative versions of (\ref{ym}) and (\ref{asd}) and instanton solutions for the cases of noncommutative $\mathbb R^4$ are presented in \cite{NS}, with self-duality being given by the generalized ASD equations
$$
F^+_{\mu\nu}=0
$$
where the curvature of the connection is calculated by the generalized formula
$$
F^i_{\mu\nu, j} = \partial_\mu A^i_{\nu,j}- \partial_\nu A^i_{\mu,j}+A^i_{\mu,k}\star A^k_{\nu,j} - A^i_{\nu,k} \star A^k_{\mu,j}.
$$
ASD connections are then automatically solutions to the deformed Yang--Mills equations:
$$
\partial_\mu F_{\mu\nu} - A_\mu\star F_{\mu\nu} = 0.
$$
Instantons on noncommutative $\mathbb R^4$ in this sense were further studied in \cite{SW} for their relations with string theory, and in \cite{KKO} on noncommutative projective planes. For our noncommutative deformations of the $Z_k$, we have obtained global star products and one could also approach the study of instantons using such deformed equations. However, it is more convenient to work directly in the language of vector bundles.

\begin{definition}
Based on Def.~\ref{split} and the result of Lem.~\ref{nk}, a formally algebraic bundle on $\mathcal Z_k (\sigma)$ of rank $2$ and splitting type $nk$ is called a {\it formal instanton}. The {\it charge} of a formal instanton on $\mathcal Z_k (\sigma)$ is defined as the charge of its classical limit (Def.~\ref{inscharge}), {i.e.}\ obtained by setting $\hbar = 0$.
\end{definition}

\begin{definition}
\label{insrep}
According to Def.~\ref{canonical}, instantons on $Z_k$ and on deformations of $Z_k$ can be represented in canonical coordinate charts by a pair $(j, \mathbf p)$ of an integer and a formal expression $\mathbf p = \sum p_n \hbar^n$ where $p_n$ are polynomials. For a fixed splitting type $j$, the instanton is thus determined by the coefficients of the corresponding polynomials, and in this case we denote the point on the corresponding moduli space simply by $P$ as in (\ref{point}).
\end{definition}

\begin{notation}
We will denote by $\mathbb{QI}_j (\mathcal Z_k (\sigma))$ the moduli space of formal instantons of charge $j$ on $\mathcal Z_k (\sigma)$, and by $\mathbb{QI}_j^{(n)}(\mathcal Z_k (\sigma))$ the subspace obtained by imposing the cut $\hbar^{n+1} = 0$. Hence $\mathbb{QI}^{(0)}_j (\mathcal Z_k (\sigma))$ equals the classical moduli space $\mathbb{MI}_j (Z_k)$ of instantons of charge $j$. 
\end{notation}

Therefore, by definition we have:

\begin{lemma}
The classical limit of any formal instanton on $\mathcal Z_k (\sigma)$ is an instanton on $Z_k$.
\end{lemma}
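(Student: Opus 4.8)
The plan is to unwind the definitions and reduce the statement to Lem.~\ref{nk}. A formal instanton on $\mathcal Z_k(\sigma)$ is, by definition, a formally algebraic rank $2$ bundle $\mathcal E$ over $\mathcal Z_k(\sigma)$ of splitting type $j = nk$ for some $n$; writing the splitting type as a single integer already incorporates, by the convention of Def.~\ref{type}, that the first Chern class vanishes. What has to be checked is that the classical limit $E := \mathcal E/\hbar\mathcal E$ is a holomorphic $\SL(2,\mathbb C)$ bundle on $Z_k$ of splitting type $nk$; once this is known, Lem.~\ref{nk} --- the noncompact Kobayashi--Hitchin correspondence of King \cite{Ki} for $k = 1$ and \cite{GKM} for $k \geq 2$ --- applies verbatim and identifies $E$ with an $\SU(2)$ instanton on $Z_k$.

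First I would verify that $E$ is a holomorphic rank $2$ bundle on $Z_k$. On each of the canonical charts $U, V$ the sheaf $\mathcal E$ is free of rank $2$ over $\mathcal A^\sigma$, and since the augmentation $\mathcal A^\sigma \to \mathcal O_{Z_k}$ is a ring homomorphism with kernel $\hbar\mathcal A^\sigma$, the reduction $E = \mathcal E/\hbar\mathcal E$ is free of rank $2$ over $\mathcal O_{Z_k}$ on $U$ and on $V$; hence $E$ is a holomorphic --- equivalently, by Thm.~\ref{filtrable}, algebraic --- rank $2$ bundle on $Z_k$. More precisely, by Thm.~\ref{ncfiltrable} and Def.~\ref{canonical} we may present $\mathcal E$ by a canonical transition matrix $T = \left(\begin{smallmatrix} z^j & \mathbf p \\ 0 & z^{-j}\end{smallmatrix}\right)$ with $\mathbf p = \sum_n p_n\hbar^n$ and $j = nk$ the splitting type; applying the augmentation, i.e.\ setting $\hbar = 0$, sends $T$ to $T_0 = \left(\begin{smallmatrix} z^j & p_0 \\ 0 & z^{-j}\end{smallmatrix}\right)$, which is precisely a canonical transition matrix in the sense of \cite[Thm.\ 3.3]{gasparim} for a rank $2$ bundle on $Z_k$ with vanishing first Chern class. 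In particular $\det T_0 = 1$, so $E$ is an $\SL(2,\mathbb C)$ bundle.

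Next I would note that $\mathcal E$ and $E$ have the same splitting type: this is immediate from Def.~\ref{split}, which \emph{defines} the splitting type of a bundle on $\mathcal Z_k(\sigma)$ to be the splitting type (Def.~\ref{type}) of its classical limit. Hence $E$ is a holomorphic $\SL(2,\mathbb C)$ bundle on $Z_k$ whose splitting type $j = nk$ is a multiple of $k$, and Lem.~\ref{nk} produces the corresponding $\SU(2)$ instanton on $Z_k$. Since the charge of a formal instanton was \emph{defined} to be the charge (Def.~\ref{inscharge}) of its classical limit, the instanton charges match automatically and $E \in \mathbb{MI}_j(Z_k)$. There is no real obstacle here: the argument is purely a matter of chasing the definitions, and the only point deserving a line of proof is that reducing a free $\mathcal A^\sigma$-module modulo $\hbar$ yields a free $\mathcal O_{Z_k}$-module of the same rank, compatibly with the transition-matrix description --- which follows from local freeness together with the fact that the augmentation kills $\hbar$.
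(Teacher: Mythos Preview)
Your argument is correct and follows the same route as the paper: both reduce the lemma to the definitions (Def.~\ref{split} and the definition of formal instanton) together with Lem.~\ref{nk}. The paper's own proof is a one-line remark that the projection $\pi$ from quantum to classical moduli is compatible with the vector bundle/instanton correspondence, whereas you unpack this in full --- verifying explicitly that reduction modulo $\hbar$ sends the canonical transition matrix $T$ to a classical $T_0$ with $\det T_0 = 1$, and then invoking Lem.~\ref{nk}; this extra care is harmless and arguably clearer.
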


\begin{proof} The correspondence between vector bundles and instantons may be applied to both quantum and classical moduli. The projection onto classical moduli translates as
\begin{equation*}
\begin{tikzpicture}[baseline=-.4em]
\matrix (m) [matrix of math nodes, row sep=.5em, inner sep=2pt,
column sep=3em, ampersand replacement=\&]
{
\mathfrak M_j^{(1)} (\mathcal Z_k (\sigma)) \&\& \mathbb{QI}_j^{(1)} (\mathcal Z_k (\sigma)) \\
                       \& \longleftrightarrow \& \\
\mathfrak M_j (Z_k)                     \&\& \mathbb{MI}_j (Z_k) \\
};
\path[-stealth,line width=.6pt,font=\scriptsize]
(m-1-1) edge node[pos=.45,left=-.3ex] {$\pi$} (m-3-1)
(m-1-3) edge node[pos=.45,left=-.3ex] {$\pi$} (m-3-3)
;
\end{tikzpicture}
\end{equation*}
\end{proof}

We now interpret the statements of Thms.~\ref{iso} and \ref{noniso} in terms of instantons. These get rephrased as:

\begin{theorem}
\label{quantumclassical}
Let $\sigma_0$ be the Poisson structure on $Z_1$ which in canonical coordinates is 
described by $\sigma_0 = (1, -\xi)$,
hence $\sigma_0$ is degenerate at a single point of the line $\ell \subset Z_1$.
Then for each value of the charge $j$ we obtain an isomorphism between the quantum and the classical instanton moduli spaces
\[
\mathbb{QI}_j^{(1)} (\mathcal Z_1 (\sigma_0)) \simeq \mathbb {MI}_j (Z_1).
\]
\end{theorem}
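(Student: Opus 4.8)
The plan is to reduce the statement to Theorem~\ref{iso}, which was already established for the moduli spaces $\mathfrak M_j^{(1)}(\mathcal Z_1(\sigma_0))$ of rank~$2$ bundles, and to observe that passing from bundles to instantons is essentially a matter of reindexing.  Concretely, by Lemma~\ref{nk} a holomorphic $\SL(2,\mathbb C)$ bundle on $Z_1$ corresponds to an $\SU(2)$ instanton precisely when its splitting type is a multiple of $k=1$, which is no condition at all; hence every stable rank~$2$ bundle of splitting type~$j$ and vanishing first Chern class on $Z_1$ is an instanton, and the normalized charge of Def.~\ref{inscharge} equals $j$ since $\epsilon = 0$ for $k=1$.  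Thus $\mathbb{MI}_j(Z_1) = \mathfrak M_j(Z_1)$ as sets, and likewise, by Def.~\ref{insrep} and the definition of $\mathbb{QI}_j^{(1)}$, we have $\mathbb{QI}_j^{(1)}(\mathcal Z_1(\sigma_0)) = \mathfrak M_j^{(1)}(\mathcal Z_1(\sigma_0))$.

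The steps I would carry out, in order, are as follows.  First, recall from the lemma preceding this theorem (the compatibility of the correspondence with classical limits) that the projection $\pi \colon \mathbb{QI}_j^{(1)}(\mathcal Z_1(\sigma_0)) \to \mathbb{MI}_j(Z_1)$ is identified with the projection $\pi \colon \mathfrak M_j^{(1)}(\mathcal Z_1(\sigma_0)) \to \mathfrak M_j(Z_1)$ under the Kobayashi--Hitchin correspondence for $Z_k$ of \cite[Cor.\ 5.5]{GKM}.  Second, invoke Theorem~\ref{iso}: for $\sigma_0 = (1,-\xi)$ and each $j$, the quantum moduli space $\mathfrak M_j^{(1)}(\mathcal Z_1(\sigma_0))$ is isomorphic to $\mathfrak M_j(Z_1)$, i.e.\ $\pi$ is an isomorphism.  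Third, transport this isomorphism through the identifications of the previous paragraph to conclude $\mathbb{QI}_j^{(1)}(\mathcal Z_1(\sigma_0)) \simeq \mathbb{MI}_j(Z_1)$.

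Since the argument is a direct translation, there is no serious technical obstacle; the only point requiring a little care is checking that the notions of charge line up.  One must verify that ``charge~$j$'' in the instanton picture corresponds exactly to ``splitting type~$j$'' in the bundle picture under the chosen normalization, which is precisely the content of Def.~\ref{inscharge} and the remark following it (for $k=1$ the minimal local second Chern class of a splitting-type-$j$ bundle is $j$, and $\epsilon = 0$).  Granting this bookkeeping, the theorem follows immediately.

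\begin{proof}
By Lemma~\ref{nk}, since $k = 1$, every holomorphic $\SL(2,\mathbb C)$ bundle on $Z_1$ corresponds to an $\SU(2)$ instanton, and by Def.~\ref{inscharge} (with $\epsilon = 0$ for $k = 1$) the charge of the instanton equals the splitting type $j$ of the bundle.  Hence the Kobayashi--Hitchin correspondence of \cite[Cor.\ 5.5]{GKM} identifies $\mathbb{MI}_j(Z_1)$ with $\mathfrak M_j(Z_1)$ and, applying the same correspondence to the noncommutative deformation $\mathcal Z_1(\sigma_0)$ together with Def.~\ref{insrep}, identifies $\mathbb{QI}_j^{(1)}(\mathcal Z_1(\sigma_0))$ with $\mathfrak M_j^{(1)}(\mathcal Z_1(\sigma_0))$.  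Moreover, by the lemma preceding this theorem these identifications intertwine the two projections $\pi$ to the classical limit.  By Theorem~\ref{iso} the projection $\pi \colon \mathfrak M_j^{(1)}(\mathcal Z_1(\sigma_0)) \to \mathfrak M_j(Z_1)$ is an isomorphism for every $j$.  Transporting this isomorphism along the identifications above yields
\[
\mathbb{QI}_j^{(1)}(\mathcal Z_1(\sigma_0)) \simeq \mathbb{MI}_j(Z_1),
\]
as claimed.
\end{proof}
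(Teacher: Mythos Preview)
Your approach is exactly the paper's: the text immediately preceding Theorem~\ref{quantumclassical} says ``We now interpret the statements of Thms.~\ref{iso} and \ref{noniso} in terms of instantons. These get rephrased as:'' and offers no separate proof, so reducing to Theorem~\ref{iso} via the Kobayashi--Hitchin correspondence is precisely what is intended.

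One small imprecision worth fixing: the sentence ``the charge of the instanton equals the splitting type $j$ of the bundle'' is not literally correct. By Def.~\ref{inscharge} the (normalized) charge is the local second Chern class $\chi(\ell,E)$ (plus $\epsilon=0$ here), and Table~\ref{table} shows that for splitting type $j=3$ on $Z_1$ this ranges from $3$ up to $9$. What is true, and what the paper actually uses, is that the \emph{minimal} charge at splitting type $j$ equals $j$ (Remark after Def.~\ref{inscharge}), and that in Theorems~\ref{existence}, \ref{quantumclassical}, and \ref{qi} the index $j$ is simultaneously the splitting type and the (minimal) charge --- the moduli $\mathbb{MI}_j(Z_k)$ are described in the same coordinates $p_{1,l}$ of (\ref{point}) as $\mathfrak M_j(Z_k)$. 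You noticed this in your preamble but then overstated it in the formal proof; replacing the offending clause with the correct statement about minimal charge (or simply citing Theorem~\ref{existence}) repairs it.
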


\begin{theorem}\label{qi}
The quantum instanton moduli space $\mathbb{QI}_j^{(1)} (\mathcal Z_k (\sigma))$ can be viewed as the étale space of a constructible sheaf over the classical instanton moduli space $\mathbb {MI}_j( Z_k)$ which is supported on a closed subvariety, being trivial over 
$$
S_0:= \{P \in \mathbb{MI}_j (Z_k) \mid p_{1,k-j+1} \neq 0 \}
$$
and having stalk of dimension $i$ over 
$$
S_i := \{P \in \mathbb {MI}_j (Z_k) \mid p_{1,k-j+1} = \cdots = p_{1,k-j+i} = 0, \; p_{1,k-j+1+i} \neq 0 \}.
$$
\end{theorem}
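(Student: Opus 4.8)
### Proof Proposal for Theorem~\ref{qi}

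The plan is to observe that Theorem~\ref{qi} is nothing but the translation of Theorem~\ref{noniso} through the Kobayashi--Hitchin correspondence of Lemma~\ref{nk}, so the argument is a bookkeeping exercise once the dictionary between the moduli of rank $2$ bundles and the moduli of instantons is set up carefully. First I would recall that by Lemma~\ref{nk} a holomorphic $\SL(2,\mathbb C)$ bundle on $Z_k$ corresponds to an $\SU(2)$ instanton precisely when its splitting type $j$ is a multiple of $k$; restricting attention to such $j$, the correspondence $\mathfrak M_j(Z_k) \leftrightarrow \mathbb{MI}_j(Z_k)$ of the diagram in the proof of the preceding lemma identifies the classical bundle moduli space with the classical instanton moduli space, and the normalization of Def.~\ref{inscharge} (adding $\epsilon$) was precisely chosen so that charge $j$ matches splitting type $j$. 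The same identification applies verbatim at the level of the quantized moduli spaces: a formal instanton on $\mathcal Z_k(\sigma)$ is by definition a formally algebraic rank $2$ bundle of splitting type $nk$, so $\mathbb{QI}_j^{(1)}(\mathcal Z_k(\sigma)) \simeq \mathfrak M_j^{(1)}(\mathcal Z_k(\sigma))$ and $\mathbb{MI}_j(Z_k) \simeq \mathfrak M_j(Z_k)$ compatibly with the projections $\pi$ to the classical limit.

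Next I would transport the constructible sheaf structure across this identification. Theorem~\ref{noniso} states that $\mathfrak M_j^{(1)}(\mathcal Z_k(\sigma))$ is a constructible sheaf over $\mathfrak M_j(Z_k)$, trivial over the open stratum $S_0 = \{P \mid p_{1,k-j+1} \neq 0\}$ and with stalk of dimension $i$ over the locally closed strata $S_i = \{P \mid p_{1,k-j+1} = \cdots = p_{1,k-j+i} = 0,\; p_{1,k-j+1+i} \neq 0\}$. Under the identification above these same subsets $S_0, S_i$ — described in terms of the canonical coordinates of Not.~\ref{insrep} and \eqref{point}, which are shared between the bundle and instanton pictures — become the corresponding strata of $\mathbb{MI}_j(Z_k)$, and the fibre dimensions are preserved since $\pi$ commutes with the correspondence. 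Hence $\mathbb{QI}_j^{(1)}(\mathcal Z_k(\sigma))$ is a constructible sheaf over $\mathbb{MI}_j(Z_k)$ with exactly the stated stratification. Finally I would note that the union $S_1 \cup S_2 \cup \cdots$ of strata where the stalk is nontrivial is cut out by the vanishing of $p_{1,k-j+1}$, hence is a closed subvariety of $\mathbb{MI}_j(Z_k)$, which gives the ``supported on a closed subvariety'' assertion.

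The only genuine content beyond rephrasing is making sure the identifications are legitimate, and the step I expect to require the most care is checking that the canonical coordinates used to define the strata $S_i$ are literally the same on both sides of the Kobayashi--Hitchin correspondence — i.e.\ that the coefficient $p_{1,k-j+1}$ of the transition matrix in Def.~\ref{canonical} is a well-defined function on $\mathbb{MI}_j(Z_k)$ and not merely on $\mathfrak M_j(Z_k)$. This is immediate from Def.~\ref{insrep}, which explicitly represents instantons on $Z_k$ and its deformations by the pair $(j,\mathbf p)$ in canonical coordinates, so there is no ambiguity; the remainder is a direct appeal to Lemma~\ref{nk}, Theorem~\ref{noniso}, and the commuting square in the proof of the lemma immediately preceding this theorem. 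One could optionally remark that for $k=1$ and the minimally degenerate $\sigma_0$ this recovers Theorem~\ref{quantumclassical} as the degenerate case where only the stratum $S_0$ is nonempty.
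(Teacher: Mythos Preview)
Your proposal is correct and matches the paper's own treatment: the paper explicitly presents Theorem~\ref{qi} as the rephrasing of Theorem~\ref{noniso} in the language of instantons via the Kobayashi--Hitchin correspondence of Lemma~\ref{nk}, using precisely the commuting square you invoke. If anything, you have been more careful than the paper, which gives no separate proof beyond the sentence ``These get rephrased as'' and the identification $\mathfrak M_j \leftrightarrow \mathbb{MI}_j$ recorded in the preceding lemma.
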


\begin{notation}
When regarding the quantum moduli space $\mathbb{QI}_j (\mathcal Z_k (\sigma))$ as the étale space of a sheaf over the classical moduli space $\mathbb{MI}_j (Z_k)$, we will use the notation $\mathcal Q^\sigma$ and call it the {\it quantizing sheaf}.
\end{notation}

\begin{definition}
\label{rebel}
A (classical) instanton $A$ is called a {\it rebel instanton} for $\sigma$ if the stalk $\mathcal Q^\sigma_A$ of the quantizing sheaf $\mathcal Q^\sigma$ at $A$ is nontrivial. The dimension of the stalk at $A$ is called the {\it level of rebelliousness} of $A$. Hence, if the stalk  $\mathcal Q^\sigma_A$ has rank $n$ then $A$ is said to present level $n$ rebelliousness, or equivalently, $A$ is called {\it $n$-rebel instanton}. For brevity the vocabulary {\it rebel} will be used to denote rebelliousness of any level $n \geq 1$. 

An instanton that is not rebel for $\sigma$ is called {\it $\sigma$-tame}.
\end{definition}

In particular, in the language of Def.~\ref{rebel} (see also Not.~\ref{minimally}) we can rephrase Thm.~\ref{quantumclassical} as follows.

\begin{theorem}
\label{alltame}
Let $\sigma_0$ be a minimally degenerate Poisson structure on $Z_1$. Then all instantons on $Z_1$ are $\sigma_0$-tame.
\end{theorem}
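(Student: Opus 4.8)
The plan is to derive Theorem~\ref{alltame} as a reformulation of Theorem~\ref{quantumclassical} (equivalently Theorem~\ref{iso}) through the definitions of \S\ref{instantons}. By Definition~\ref{rebel}, an instanton $A$ on $Z_1$ of charge $j$ is $\sigma_0$-tame exactly when the stalk $\mathcal Q^{\sigma_0}_A$ of the quantizing sheaf is trivial, i.e.\ when the fibre over $A$ of the projection $\pi\colon \mathbb{QI}_j^{(1)}(\mathcal Z_1(\sigma_0)) \to \mathbb{MI}_j(Z_1)$ is a single point. So it suffices to show that $\pi$ is an isomorphism of sheaves over $\mathbb{MI}_j(Z_1)$ for every charge $j$.

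First I would transport the question to vector bundles. On $Z_1$ we have $k = 1$, so every splitting type is a multiple of $k$, and by Lemma~\ref{nk} instantons of charge $j$ correspond under the Kobayashi--Hitchin correspondence to stable $\SL(2,\mathbb C)$ bundles of splitting type $j$; moreover this correspondence intertwines the two classical-limit projections (cf.\ the commutative diagram preceding Theorem~\ref{quantumclassical}). Hence $\pi\colon \mathbb{QI}_j^{(1)}(\mathcal Z_1(\sigma_0)) \to \mathbb{MI}_j(Z_1)$ is identified with $\pi\colon \mathfrak M_j^{(1)}(\mathcal Z_1(\sigma_0)) \to \mathfrak M_j(Z_1)$, which is an isomorphism by Theorem~\ref{iso}. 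Therefore every fibre is a point and every instanton on $Z_1$ is $\sigma_0$-tame.

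The only substantive point --- which I expect to be the main obstacle --- is that Theorems~\ref{iso} and \ref{quantumclassical} are phrased for the single Poisson structure $(1,-\xi)$ in canonical coordinates, whereas here $\sigma_0$ denotes an arbitrary minimally degenerate Poisson structure on $Z_1$. To bridge this I would first normalize coordinates: by Notation~\ref{minimally} and the discussion around (\ref{degeneracytangent}), $\Dgn(\sigma_0)$ contains a fibre $\pi^{-1}(x)$, so one may choose canonical coordinates with $D = Z_1 \setminus U = \pi^{-1}(x)$; then Proposition~\ref{tangentpoisson} gives $\sigma_U = f$ for a global function $f$, and the remaining condition $\ell \not\subset \Dgn(\sigma_0)$ forces $f$ to have nonzero constant term, since global functions on $Z_1$ restrict to constants on $\ell$ by the bounds (\ref{boundszk}). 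I would then verify that the computation behind Theorem~\ref{iso} (Examples~\ref{p1} and \ref{p13}) is insensitive to this generalization: Proposition~\ref{extends} still yields a global star product, Theorem~\ref{ncextformal} still reduces extension classes to their canonical form, and the Toeplitz-type system coming from the $(1,2)$-entry of the isomorphism relation is still solvable for arbitrary $q'$ --- the only features used being that $\sigma_U$ is a global function and that, for a point of $\mathfrak M_j(Z_1)$, the leading coefficients of the corresponding extension class do not all vanish, so that enough of the free variables among the $a', d', c$ remain to absorb any $q'$. Assembling these observations gives the statement for every minimally degenerate $\sigma_0$.
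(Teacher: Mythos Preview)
Your proposal is correct and matches the paper's approach: Theorem~\ref{alltame} is presented there simply as a restatement of Theorem~\ref{quantumclassical} in the language of Definition~\ref{rebel}, with no separate proof given beyond the sentence ``we can rephrase Thm.~\ref{quantumclassical} as follows.'' Your first two paragraphs reproduce exactly this reduction.

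Your third paragraph goes beyond the paper. You correctly notice that Theorems~\ref{iso} and \ref{quantumclassical} are literally stated only for the particular Poisson structure $(1,-\xi)$, whereas Notation~\ref{minimally} allows $\sigma_0$ to be any Poisson structure on $Z_1$ whose degeneracy locus contains a fibre but not all of $\ell$. The paper does not explicitly close this gap; it simply points to Not.~\ref{minimally} and treats the passage as a rephrasing. Your sketch for handling the general case (normalize coordinates so the degenerate fibre is $\{\xi=0\}$, use Proposition~\ref{tangentpoisson} to write $\sigma_U=f$ with $f$ a global function of nonzero constant term, then rerun the computation of Examples~\ref{p1}--\ref{p13}) is sound in outline. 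The one place requiring genuine care is that for non-constant $f$ the star product on $U$ is no longer Moyal, so extra bidifferential terms involving derivatives of $f$ appear; you would need to check that these contribute to the $(1,2)$ entry only through terms that can still be absorbed by the free coefficients of $a',d',c$, which follows from the same bookkeeping as in the proof of Proposition~\ref{extends} since $f$ satisfies the bounds~(\ref{boundszk}).
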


Note that the zero section $\ell$ of $Z_1$ contracts to a smooth point and Thm.~\ref{alltame} stands in stark contrast to the case of $k \geq 3$ where the zero section $\ell$ of $Z_k$ contracts to a $\frac1k (1,1)$ singularity.

\begin{theorem}
\label{rebels}
Let $k \geq 3$. Then $Z_k$ produces rebel instantons for any holomorphic Poisson structure $\sigma$.
\end{theorem}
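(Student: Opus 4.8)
The plan is to read Theorem~\ref{rebels} off Theorem~\ref{qi} once one observes that on $Z_k$ with $k\geq 3$ \emph{every} holomorphic Poisson structure satisfies the hypothesis of that theorem. Indeed, by Lemma~\ref{poissonstructures} any Poisson structure $\sigma$ on $Z_{\geq 3}$ has coefficient function $\sigma_U$ a multiple of $u$; geometrically $\Dgn(\sigma)\supset\ell$, so there is simply no minimally degenerate structure $\sigma_0$ available here, in sharp contrast with $Z_1$ (cf.\ Not.~\ref{minimally} and Thm.~\ref{alltame}). Since $\sigma_U$ is a multiple of $u$, the hypothesis of Example~\ref{ujarb}, and hence of Theorems~\ref{noniso} and~\ref{qi}, is met for \emph{any} deformation quantization $\mathcal Z_k(\sigma)$ (which exists by Thm.~\ref{twisteddeformationquantization}). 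Thus the quantizing sheaf $\mathcal Q^\sigma$ over $\mathbb{MI}_j(Z_k)$ is trivial exactly over $S_0=\{P\mid p_{1,k-j+1}\neq 0\}$ and has a stalk of dimension $i$ over $S_i$. By Definition~\ref{rebel}, $Z_k$ produces rebel instantons for $\sigma$ as soon as $S_i$ is non-empty for some charge $j$ (necessarily a multiple of $k$, by Lem.~\ref{nk}) and some $i\geq 1$; so the whole content of the theorem is to exhibit one such non-empty stratum.

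I would take the minimal charge $j=k$. For $k\geq 3$ we have $k\leq 2j-2$, so (by the discussion preceding Ex.~\ref{p1}) the splitting-type-$k$ moduli space is supported on $\ell^{(1)}$ and Theorem~\ref{qi} applies without the ``partial description'' caveat; moreover $k-j+1=1$ and $k-j+2=2\leq k-1=j-1$, so $p_{1,k-j+1}=p_{1,1}$ and $p_{1,k-j+2}=p_{1,2}$ are genuine homogeneous coordinates of a point $P=[p_{1,1}:\dotsb:p_{1,k-1}]\in\mathbb{MI}_k(Z_k)$. By Theorem~\ref{exist} (and Theorem~\ref{existence}, via Lem.~\ref{nk}) $\mathbb{MI}_k(Z_k)$ contains a dense open subset isomorphic to $\mathbb{P}^{k-2}$ with a closed subvariety of codimension $\geq k+1$ removed; since $k-2<k+1$ that subvariety is empty, so $\mathbb{MI}_k(Z_k)\cong\mathbb{P}^{k-2}$ in these coordinates. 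The coordinate hyperplane $\{p_{1,1}=0\}$ is then a non-empty $\mathbb{P}^{k-3}$ and $S_1=\{p_{1,1}=0,\ p_{1,2}\neq 0\}$ is a non-empty (dense open) subset of it. Hence $\mathbb{MI}_k(Z_k)\neq S_0$, $\mathcal Q^\sigma$ is a non-trivial constructible sheaf, every point of $S_1$ is a $1$-rebel instanton for $\sigma$, and when $k\geq 4$ the deeper strata $S_2,\dotsc,S_{k-2}$ furnish rebel instantons of every level up to $k-2$. This is exactly the contrast with Thm.~\ref{alltame} anticipated in the text: the contraction of $\ell\subset Z_1$ is smooth, whereas for $k\geq 3$ it acquires a $\tfrac1k(1,1)$ singularity, and no tame quantization survives.

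The one delicate point — the main obstacle — is the identification of $\mathbb{MI}_k(Z_k)$ with all of $\mathbb{P}^{k-2}$, i.e.\ making sure that imposing $p_{1,1}=0$ (and the deeper vanishings) does not expel us from the \emph{instanton} moduli space by spoiling stability or by raising the charge above $k$, in the way the charge-$5$ point $[0:0:0:1]$ lies outside $\mathbb{MI}_3(Z_1)$ in Example~\ref{j3u}. This is pure codimension bookkeeping: by Theorem~\ref{exist} the classical moduli space differs from $\mathbb{P}^{2j-k-2}$ only in codimension $\geq k+1\geq 4>1$, so any higher-charge or unstable sublocus one might worry about cannot contain the codimension-one hyperplane $\{p_{1,k-j+1}=0\}$ — and for $j=k$ this sublocus is empty (consistent with the explicit invariants of Table~\ref{table}). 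If one prefers to avoid even this edge case, the identical argument works verbatim for any larger multiple $j=Nk$ of $k$: there $\mathbb{MI}_j(Z_k)$ is $\mathbb{P}^{2j-k-2}$ with a possibly non-empty but still codimension-$\geq k+1$ subvariety $W$ removed, and $\{p_{1,k-j+1}=0,\ p_{1,k-j+2}\neq 0\}$, of codimension $1$, is not contained in $W$, so $S_1\neq\emptyset$ again. Everything else is a direct quotation of Lemma~\ref{poissonstructures} and Theorem~\ref{qi}.
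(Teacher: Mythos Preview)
Your proposal is correct and follows exactly the route the paper intends: the paper states Theorem~\ref{rebels} without an explicit proof, presenting it as a direct rephrasing of Theorem~\ref{qi} (itself the instanton translation of Theorem~\ref{noniso}) once one invokes Lemma~\ref{poissonstructures} to note that on $Z_{\geq 3}$ every $\sigma_U$ is a multiple of $u$. You supply more detail than the paper does --- in particular the verification that some $S_i$ with $i\geq 1$ is non-empty for an admissible charge $j=k$, via the codimension bound of Theorem~\ref{exist} --- but this is precisely the content the paper leaves implicit.
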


To be interesting from a physical perspective, it is important that the first-order deformations parametrized by a holomorphic Poisson structure can be continued to all higher orders, which was shown in \S \ref{quantizableimmersions}, at least for Poisson structures tangent to a fibre of the projection to $\mathbb P^1$. Thms.~\ref{alltame} and \ref{rebels} now exhibit a phenomenon that can already be observed at first order in $\hbar$ and for higher orders in $\hbar$ we expect Thms.~\ref{alltame} and \ref{rebels} to generalize as follows.

Phrased in the language of (rebel) instantons, Rem.~\ref{higherhbar} suggests that for the minimally degenerate Poisson structure $\sigma_0$ on $Z_1$, we expect an isomorphism $\mathbb{QI}_j (\mathcal Z_1 (\sigma_0)) \simeq \mathbb{MI}_j (Z_1)$, {\it i.e.}\ the absence of rebel instantons implies that the moduli of noncommutative and classical instantons are isomorphic for arbitrary powers of $\hbar$.

On the other hand we expect the existence of $n$-rebel instantons (as for example in Thm.~\ref{rebels}) to imply the existence of noncommutative instantons whose minimal representatives $\mathbf p = \sum_{n \geq 0} p_n \hbar^n$ contain non-vanishing terms $p_n$, {\it i.e.}\ $n$-rebel instantons produce noncommutative instantons up to $\hbar^n$.

\appendix
\section{Cohomology}

In this appendix we determine global sections of $Z_k$ with line bundle coefficients. 
These are used in the proof of Lem.~\ref{poissonstructures} to determine the space of Poisson structures on $Z_k$.

The coordinate ring of $Z_k$ is
\[
R = \H^0 (Z_k, \mathcal O) = {\mathbb C}[x_0, x_1, \dotsc, x_k] \bigl/ (x_i x_{j+1} - x_{i+1} x_j)_{0 \leq i < j \leq k-1}.
\]
The contraction map $Z_k \to \Spec R$ is given in $(z,u)$-coordinates by $z^i u \mapsto x_i$. We compute $\H^0({Z_k,\mathcal O}(j))$.

\begin{lemma}
\label{j>0}
The cohomology $\H^0 (Z_k, \mathcal O (j))$ for $j \geq 0$ is generated as an $R$-module by the monomials $\beta_i = z^i$ for $0 \leq i \leq j$ with relations
\[
\beta_i x_{l-1} - \beta_{i-1} x_l = 0
\]
where $1 \leq l \leq k$ and $1 \leq i \leq j$.
\end{lemma}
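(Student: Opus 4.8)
The plan is to compute $\H^0(Z_k,\mathcal O(j))$ directly from the two canonical charts and then identify it with the presented $R$-module. A section of $\mathcal O(j)=\pi^*\mathcal O_{\mathbb P^1}(j)$ is a function $s\in\mathcal O(U)=\mathbb C[z,u]$ whose transform $z^{-j}s$, rewritten in the coordinates $(\xi,v)=(z^{-1},z^k u)$, lies in $\mathcal O(V)=\mathbb C[\xi,v]$. Writing $s=\sum_{i\ge 0}p_i(z)u^i$ and substituting $z=\xi^{-1}$, $u=\xi^k v$ gives $z^{-j}s=\sum_{i\ge 0}\xi^{\,ki+j}p_i(\xi^{-1})v^i$, so $s$ is a global section precisely when $\deg_z p_i\le ki+j$ for all $i$. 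Hence $\H^0(Z_k,\mathcal O(j))$ has $\mathbb C$-basis $\{z^l u^i:i\ge 0,\ 0\le l\le ki+j\}$, to be compared with the basis $\{z^l u^i:i\ge 0,\ 0\le l\le ki\}$ of $R$, where $x_m=z^m u$.

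Next I would set up the $R$-linear map $\varphi\colon R^{\oplus(j+1)}\to\H^0(Z_k,\mathcal O(j))$ with $\varphi(e_m)=\beta_m=z^m$; this lands in $\H^0(\mathcal O(j))$ because $m\le j$. For surjectivity, a basis monomial $z^l u^i$ with $0\le l\le ki+j$ equals $\beta_m\cdot(z^{l-m}u^i)$ for any integer $m$ with $\max(0,l-ki)\le m\le\min(j,l)$ (such $m$ exists, and $z^{l-m}u^i\in R$ since $0\le l-m\le ki$); equivalently, surjectivity is the classical fact that $\H^0(\mathbb P^1,\mathcal O(j))\otimes\H^0(\mathbb P^1,\mathcal O(ki))\to\H^0(\mathbb P^1,\mathcal O(j+ki))$ is onto. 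The displayed relations hold since $\varphi(x_{l-1}e_i-x_l e_{i-1})=z^{l-1}u\cdot z^i-z^l u\cdot z^{i-1}=0$, so the submodule $N$ generated by $\rho_{l,i}:=x_{l-1}e_i-x_l e_{i-1}$ (for $1\le l\le k$, $1\le i\le j$) is contained in $\ker\varphi$; it remains to prove $N=\ker\varphi$.

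For this I would introduce on $R^{\oplus(j+1)}$ the bigrading with $\deg z=(1,0)$, $\deg u=(0,1)$ and, crucially, $\deg e_m=(m,0)$. Then $x_m$ is bihomogeneous of bidegree $(m,1)$, each $\rho_{l,i}$ is bihomogeneous of bidegree $(l+i-1,1)$, so $N$ is bihomogeneous, and $\varphi$ is bidegree-preserving when $z^l u^i\in\H^0(\mathcal O(j))$ is placed in bidegree $(l,i)$. All bigraded components of $R^{\oplus(j+1)}$, $N$ and $\H^0(\mathcal O(j))$ are finite-dimensional, so it suffices to show $\dim_{\mathbb C}(R^{\oplus(j+1)}/N)_{(a,s)}\le 1$ for every $(a,s)$: combined with the surjectivity of $\varphi$ and with $\dim_{\mathbb C}\H^0(Z_k,\mathcal O(j))_{(a,s)}\le 1$, this forces $\bar\varphi\colon R^{\oplus(j+1)}/N\to\H^0(\mathcal O(j))$ to be an isomorphism, hence $N=\ker\varphi$. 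In bidegree $(a,s)$ the module $R^{\oplus(j+1)}$ has basis $\{z^{a-m}u^s e_m:\max(0,a-ks)\le m\le\min(j,a)\}$ (empty unless $0\le a\le ks+j$), and multiplying a suitable $\rho_{l,i}$ by a suitable monomial of $R$ gives, inside this bidegree, the identification $z^{a-m}u^s e_m\equiv z^{a-m+1}u^s e_{m-1}$ whenever $m$ and $m-1$ both lie in the admissible range; chaining these collapses the component to at most a one-dimensional space.

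I expect this final step --- the verification that the relations $\rho_{l,i}$ connect, within each bidegree, all the basis monomials $z^{a-m}u^s e_m$ --- to be the only substantial part; it is essentially the statement that the syzygies of $\beta_0,\dots,\beta_j$ over the rational normal curve ring $R$ are generated by the obvious Hankel-type relations, and it can either be carried out by the elementary index bookkeeping just sketched or deduced from the known description of the kernels of the multiplication maps $\H^0(\mathbb P^1,\mathcal O(j))\otimes\H^0(\mathbb P^1,\mathcal O(ki))\to\H^0(\mathbb P^1,\mathcal O(j+ki))$. Everything preceding it is routine manipulation with the two coordinate charts.
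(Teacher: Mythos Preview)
Your computation of $\H^0(Z_k,\mathcal O(j))$ via the two charts and the verification that the monomials $\beta_i=z^i$ generate it as an $R$-module is exactly what the paper does. The paper's own proof, however, stops there: it simply lists the identities $\beta_i x_{l-1}=\beta_{i-1}x_l$ and concludes, without ever arguing that these relations generate the full syzygy module. In other words, the paper proves generation and exhibits the relations, but does not establish that the stated relations give a \emph{presentation}.

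Your bigrading argument fills precisely this gap, and it is correct. The key point --- that in each bidegree $(a,s)$ the relations $\rho_{l,m}$ (multiplied by suitable monomials of $R$) allow one to pass from $z^{a-m}u^s e_m$ to $z^{a-m+1}u^s e_{m-1}$ whenever both lie in the admissible range --- does go through: given $m$ and $m-1$ both in $[\max(0,a-ks),\min(j,a)]$ one has $0\le a-m\le ks-1$, and then one checks that some $l$ with $1\le l\le k$ satisfies $0\le a-m-l+1\le k(s-1)$, so that the multiplier $z^{a-m-l+1}u^{s-1}$ lies in $R$. (For $s=0$ the bidegree component of $R^{\oplus(j+1)}$ is already at most one-dimensional, so nothing is needed there.) Thus your proof is correct and strictly more complete than the paper's; for the paper's purposes only the generation statement is actually used (in Lemma~\ref{poissonstructures}), which explains why the paper does not pursue the presentation further.
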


\begin{proof}
Let $\mathfrak U = \{ U, V \}$ be our canonical coordinates on $Z_k$ and let $\sigma \in \check{\mathrm C}^0 (\mathfrak U, {\mathcal O}(j))$ be a $0$-cochain,
thus $\sigma$ consists of a pair of holomorphic sections $(\sigma_U,\sigma_V)$.
Writing out the general form of such a cochain, we have 
$\sigma_U = \sum_{i=0}^\infty\sum_{l=0}^\infty \sigma_{il} z^l u^i$ as 
this is just an arbitrary holomorphic map on $U$, where the line bundle 
trivializes. As we know, $\H^0$ computes global holomorphic sections, 
so to find the cohomology class of $\sigma$ we just need to find the most general such $\sigma$ that extends holomorphically to $V$. 
The transition function for the line bundle ${\mathcal O}(j)$ is 
$T=z^{-j}$. Changing coordinates then gives 
$T\sigma = z^{-j} \sum_{i=0}^\infty\sum_{l=0}^\infty \sigma_{il}z^lu^i$
which needs to be holomorphic on $V$. This gives the condition 
that $T\sigma$ must contain only terms $z^ru^s$ with 
$r \leq ks$. Terms that do not satisfy this condition are 
not cocycles, and must be removed from the expression of $T\sigma$.
We are thus left with: 
$T\sigma = \sum_{i=0}^\infty\sum_{l=0}^{ki+j} \sigma_{il}z^{l-j}u^i$
or equivalently
$\sigma = \sum_{i=0}^{\infty}\sum_{l=0}^{ki+j} \sigma_{il}z^lu^i$
which can be rewritten as
$$\sigma= \sum_{l=0}^j \sigma_{0l}z^l+
 \sum_{i=1}^{\infty}\sum_{l=0}^{ki+j} \sigma_{il}z^{l-ki}(z^ku)^i 
\text{.}$$
Now notice that on the right-hand side every term on the second sum can be obtained
from a term on the first sum by multiplying by $(z^ku)^i$ which are 
global holomorphic functions on $Z_k$, thus $\H^0(Z_k,{\mathcal O}(j))$
with $j\geq 0$ is generated as an $R$-module by the monomials $\beta_i = z^i$ for $0\leq i\leq j$. Since $j \geq 0$ there is at least one such $\beta_i$. These satisfy the equalities
\begin{flalign*}
&& \beta_1 x_0 = \eqmakebox[beta][r]{$\beta_0 x_1$} &= \eqmakebox[q][r]{$z u$} && \\
&& \beta_2 x_0 = \beta_1 x_1 = \eqmakebox[beta][r]{$\beta_0 x_2$} &= \eqmakebox[q][r]{$z^2 u$} && \\[-.6em]
&& \vdots\hspace{3.8em}\vdots\hspace{.9em} & \hspace{4.1em}\vdots && \\[-.3em]
&& \beta_j x_{k-1} = \eqmakebox[beta][r]{$\beta_{j-1} x_k$} &= \eqmakebox[q][r]{$z^{k+j-1} u	$}. && \qedhere
\end{flalign*}
\end{proof}

To compute $\H^0 (Z_k, \mathcal O (-j))$, set $\nu = -j \mod k$, so that 
$$
-j = -q k + \nu
$$
with $0 \leq \nu < k$. 

\begin{lemma}
\label{j<0}
The cohomology group $\H^0 (Z_k, \mathcal O (-j))$ for $j \geq 0$ is generated by the monomials $\alpha_i = z^i u^q$, for $0 \leq i \leq \nu$, with relations
\[
\alpha_i x_{l-1} - \alpha_{i-1} x_l = 0
\]
for $1 \leq i \leq \nu$ and $1 \leq l \leq k$. 
\end{lemma}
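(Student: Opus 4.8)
The plan is to run the same two-chart Čech computation used for Lemma~\ref{j>0} with the Leray cover $\mathfrak U = \{U,V\}$, extract the generators, verify the listed relations, and finally bootstrap completeness of the presentation from Lemma~\ref{j>0} itself. Write a candidate global section in its $U$-trivialization as $\sigma_U = \sum_{i,l\ge 0}\sigma_{il}\,z^l u^i \in \mathcal O(U) = \mathbb C[z,u]$. The transition function of $\mathcal O(-j)$ is $z^{j}$, so the $V$-representative of $\sigma$ is $z^{j}\sigma_U = \sum_{i,l\ge 0}\sigma_{il}\,z^{l+j}u^i$, and holomorphy on $V$ means (using $z^a u^b = \xi^{\,kb-a}v^b$ exactly as in the proof of Lemma~\ref{j>0}) that $\sigma_{il} = 0$ unless $l+j\le ki$, i.e.\ $0\le l\le ki-j$. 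In particular $ki\ge j$, so the $u$-exponent must satisfy $i\ge\lceil j/k\rceil$, and since $j = qk-\nu$ with $0\le\nu<k$ one has $\lceil j/k\rceil = q$; writing $i = q+m$ with $m\ge 0$ and using $kq-j = \nu$, the bound becomes $0\le l\le \nu+km$. Hence
\[
\H^0(Z_k,\mathcal O(-j)) = \Big\{\textstyle\sum_{m\ge 0}\sum_{l=0}^{\nu+km}\sigma_{q+m,l}\,z^l u^{q+m}\Big\}.
\]

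Next I would read off the generators. Each monomial $z^l u^{q+m}$ with $0\le l\le\nu+km$ factors as $\alpha_{\min(l,\nu)}\cdot\big(z^{\,l-\min(l,\nu)}u^m\big)$, and the second factor lies in $R = \H^0(Z_k,\mathcal O)$ because its $z$-exponent is at most $km$ (it is $0$ when $l\le\nu$, and equals $l-\nu\le km$ when $l>\nu$). Thus $\H^0(Z_k,\mathcal O(-j))$ is generated over $R$ by $\alpha_0,\dots,\alpha_\nu$, with at least one generator $\alpha_0 = u^q$ present since $q\ge 0$. The asserted relations are then immediate from the identification $x_a = z^a u$ under the contraction $\tau\colon Z_k\to\Spec R$: for $1\le i\le\nu$ and $1\le l\le k$ one computes $\alpha_i x_{l-1} = z^{i+l-1}u^{q+1} = \alpha_{i-1}x_l$.

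The one point genuinely requiring care is that these relations generate \emph{all} syzygies, so that we obtain a presentation and not merely a generating set; this is the step I expect to be the main obstacle. Rather than a direct syzygy computation, my plan is to deduce it from Lemma~\ref{j>0}. Multiplication by $u^q$ in $U$-coordinates agrees, on the overlap, with multiplication by $v^q = z^{qk}u^q$ in $V$-coordinates, hence patches to a global sheaf map $\mathcal O(\nu)\to\mathcal O(-j)$ (concretely, $(u^q,v^q)$ is a global section of $\mathcal O(-qk)\cong\mathcal O(\ell)^{\otimes q}$ vanishing to order $q$ along $\ell$), inducing an $R$-linear map $\H^0(Z_k,\mathcal O(\nu))\to\H^0(Z_k,\mathcal O(-j))$. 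This map is bijective: injectivity is clear, and surjectivity follows from the displayed description, since any section of $\mathcal O(-j)$ equals $u^q\cdot s$ with $s = \sum_{m}\sum_{l=0}^{\nu+km}\sigma_{q+m,l}z^l u^m$, and the condition $l\le\nu+km$ is exactly the one cutting out $\H^0(Z_k,\mathcal O(\nu))$ in Lemma~\ref{j>0} applied with $j$ replaced by $\nu\ge 0$. Under this isomorphism the generators $\beta_i = z^i$ go to $\alpha_i$ and the relations $\beta_i x_{l-1}-\beta_{i-1}x_l$ go to $\alpha_i x_{l-1}-\alpha_{i-1}x_l$, so the presentation of $\H^0(Z_k,\mathcal O(\nu))$ supplied by Lemma~\ref{j>0} transports verbatim. (Should a fully self-contained argument be preferred, completeness can instead be verified by comparing Hilbert functions graded by $u$-degree, the degree-$(q+m)$ piece having dimension $\nu+km+1$ on both sides.) Apart from this, the only thing to watch is keeping the exponent bookkeeping across the two charts straight, which is routine.
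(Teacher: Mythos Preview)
Your proof is correct and uses the same two-chart \v{C}ech argument as the paper: identical holomorphy condition $l+j\le ki$, identical identification of the generators $\alpha_i = z^i u^q$, and the same verification of the relations $\alpha_i x_{l-1}=\alpha_{i-1}x_l$. The one place you diverge is in going further than the paper: the paper's proof simply lists the relations and stops, whereas you address whether they generate \emph{all} syzygies, reducing this to Lemma~\ref{j>0} via the $R$-module isomorphism $\H^0(Z_k,\mathcal O(\nu))\xrightarrow{\;\cdot u^q\;}\H^0(Z_k,\mathcal O(-j))$. This isomorphism is a nice observation and the reduction is sound; just be aware that the paper's proof of Lemma~\ref{j>0} has the same gap (it too stops at listing relations), so if you want the presentation to be fully justified you should invoke your alternative Hilbert-function comparison rather than the bootstrap. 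Either way, what the paper actually uses downstream is only the generating set, for which your argument and the paper's coincide.
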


\begin{proof}
As in the proof of Lem.~\ref{j>0} we start with a $0$-cochain $(\sigma_U, \sigma_V)$ having $\sigma_U = \sum_{i=0}^\infty\sum_{l=0}^\infty \sigma_{il} z^l u^i$, and we look for the corresponding $\sigma_V$ making this a $0$-cocycle. Changing coordinates, we have
\[
T \sigma = z^j \sum_{i=0}^\infty \sum_{l=0}^\infty \sigma_{il} z^l u^i.
\]
Here, since $j > 0$, 
terms on $u^0$ are not holomorphic on $V$. To have holomorphicity, we need
only terms $z^ru^s$ with $r\leq ks$, we are thus looking to satisfy the 
condition $j+l\leq ki$. Thus, we arrive at the expression 
 $T\sigma= \sum_{i=0}^\infty\sum_{l=0}^{ki-j} \sigma_{il}z^{j+l}u^i$
which terms are nonzero only when $ki-j\geq 0$, that is, 
$i\geq \lfloor j/k \rfloor $. Using the notation set up just 
above for the Euclidean algorithm, we have $q=\lfloor j/k\rfloor$, 
and we are searching for holomorphic 
terms in the expression 
 $T\sigma= \sum_{i= q}^\infty\sum_{l=0}^{ki-j} \sigma_{il}z^{j+l}u^i$.
Note that for $i=q$ we have $0\leq l \leq kq-j = \nu$, thus, we may 
rewrite
\begin{align*}
T \sigma &= \sum_{l=0}^{\nu} \sigma_{ql}z^{j+l}u^q
 + \sum_{i= q+1}^\infty\sum_{l=0}^{ki-j} \sigma_{il}z^{j+l}u^i\text{,}
\intertext{or equivalently}
T \sigma &= \sum_{l=q}^{\nu} \sigma_{ql}z^{j+l}u^q
 + \sum_{i= 1}^\infty\sum_{l=0}^{\nu+ki} \sigma_{il}z^{j+l}u^{q+i}.
\intertext{Thus}
\sigma &= \sum_{l=0}^{\nu} \sigma_{ql}z^{l}u^q
 + \sum_{i= 1}^\infty\sum_{l=0}^{\nu+ki} \sigma_{il}z^{l}u^{q+i},
\end{align*}
and the second sum on the right-hand side has only terms that can be obtained 
from the first sum via multiplying by $(z^{\leq k}u)^i$ which 
are global holomorphic functions on $Z_k$. Hence 
$\H^0(Z_k,{\mathcal O}(-j))$ is generated by 
the monomials $\alpha_i = z^i u^q$, for $0 \leq i \leq \nu$.
These satisfy the relations
\begin{flalign*}
&& \alpha_1 x_0 = \eqmakebox[alpha][r]{$\alpha_0 x_1$} &= \eqmakebox[q][r]{$z u^q$} && \\
&& \alpha_2 x_0 = \alpha_1 x_1 = \eqmakebox[alpha][r]{$\alpha_0 x_2$} &= \eqmakebox[q][r]{$z^2 u^q$} && \\[-.6em]
&& \vdots\hspace{3.8em}\vdots\hspace{.9em} & \hspace{3.7em}\vdots && \\[-.3em]
&& \alpha_\nu x_{k-1} = \eqmakebox[alpha][r]{$\alpha_{\nu-1} x_k$} &= \eqmakebox[q][r]{$z^{k+\nu-1} u^q$}. && \qedhere
\end{flalign*}
\end{proof}

\section*{Acknowledgements}
We thank Oren Ben-Bassat and Pushan Majumdar for helpful discussions and the referee for carefully reading our paper and making several useful suggestions.

S.B.\ acknowledges the generous support of the Studienstiftung des deutschen Volkes and the Max Planck Institute for Mathematics in Bonn, Germany. E.G.\ was partially supported by a Simons Associateship grant of ICTP and Network grant NT8 of the Office of External Activities at ICTP, Italy.

\end{document}